\newcommand{\hlo}[1]{#1}
\newcommand{\hlb}[1]{#1}
\newtheorem{proposition}{Proposition}[section]
\newtheorem{lemma}{Lemma}[section]
\newtheorem{remark}{Remark}[section]
\newcommand{\aone}{\frac{1}{\alpha}}
\newcommand{\atwo}{\frac{1}{1-\alpha}}
\newcommand{\mom}{\rho u}
\newcommand{\Etot}{\rho E}
\newcommand{\RealSet}{\mathbb{R}}
\newcommand{\half}{\frac{1}{2}}
\newcommand{\Tau}{\mathcal{T}}
\newcommand{\EOS}{\text{EOS}}
\newcommand{\dt}{\partial_t}
\newcommand{\dx}{\partial_x}
\newcommand{\dy}{\partial_y}
\newcommand{\Dt}{\Delta t}
\newcommand{\Dx}{\Delta x}
\newcommand{\admissibleset}{\Omega}
\newcommand{\sourceterm}{\bm{S}}
\newcommand{\acousticop}{\bm{P}}
\newcommand{\bW}{\bm{W}}
\newcommand{\bU}{\bm{U}}
\newcommand{\bF}{\bm{F}}
\newcommand{\bG}{\bm{G}}
\newcommand{\br}{\bm{r}}
\newcommand{\relaxparam}{\nu}
\newcommand{\diagomatrix}{\bm{M}}
\newcommand{\appRP}{\bU_{\text{RP}}}
\newcommand{\TauRP}{\Tau_{\text{RP}}}
\newcommand{\PiRP}{\Pi_{\text{RP}}}
\newcommand{\phiRP}{\phi_{\text{RP}}}
\newcommand{\Pijump}{\mathcal{M}}
\newcommand{\mach}{\operatorname{Ma}}
\newcommand{\froude}{\operatorname{Fr}}
\newcommand{\entropy}{\eta}
\newcommand{\specificentropy}{\mathcal{U}}
\newcommand{\eg}[0]{\textit{e.g.}\,}
\begin{document}

	\begin{frontmatter}

		\title{Recasting an operator splitting solver into a standard finite volume flux-based algorithm. The case of a Lagrange-Projection-type method for gas dynamics}

		\author[1]{R\'{e}mi {Bourgeois}\corref{cor1}}
        \ead{remi.bourgeois@cea.fr}
        \author[1]{Pascal {Tremblin}}
         \author[2]{Samuel {Kokh}}
        \author[1]{Thomas {Padioleau}}
        \cortext[cor1]{Corresponding author}

        \address[1]{Université Paris-Saclay, UVSQ, CNRS, CEA, Maison de la Simulation, 91191, Gif-sur-Yvette, France}
\address[2]{Université Paris-Saclay, CEA, Service de Génie Logiciel pour la Simulation, 91191, Gif-sur-Yvette, France.}

\begin{abstract}
	In this paper, we propose a modification of an
	acoustic-transport operator splitting Lagrange-projection method for simulating compressible flows with gravity. The original method involves two steps that respectively account for acoustic and transport effects. Our work proposes a simple modification of the transport step, and the resulting modified scheme turns out to be a flux-splitting method.
	This new numerical method is less computationally expensive \hlo{in the low-Mach regime}, more memory efficient, and easier to implement than the original one. 
	We prove stability properties for this new scheme by showing that under classical CFL conditions, the method is positivity preserving for mass, energy and entropy satisfying.
	The flexible flux-splitting structure of the method enables straightforward extensions of the method to multi-dimensional problems (with respect to space)
	and high-order discretizations that are presented in this work.
	We also propose an interpretation of the flux-splitting solver as a relaxation approximation. Both the stability and the accuracy of the new method are tested against one-dimensional and two-dimensional numerical experiments that involve highly compressible flows and low-Mach regimes.
\end{abstract}

	\end{frontmatter}
\newpage
	\noindent

	%
	%
	\newpage
\newpage

\section{Introduction}
In this work, we consider the approximation of the compressible Euler equations in the presence of source
 terms derived from a smooth potential using a finite volume method.
This paper aims to showcase the recasting of an Operator Splitting Lagrange-Projection (OSLP) finite volume 
algorithm into a corresponding flux-splitting method (FSLP). The flux-splitting method we consider here has several computational and implementation advantages compared to OSLP. It requires a smaller stencil, no intermediate state storage, and can be implemented as a fully explicit flux-based solver. The simplicity of the FSLP method allows us to combine effortlessly with standard means to derive higher-order methods such as MUSCL, ENO, WENO, and MOOD frameworks.

The OSLP algorithm we use as ground material for implementing an FSLP method
is presented in \cite{padioleau2019high}. It relies on a separate treatment of acoustic and transport effects,
 and it enjoys several interesting properties: it is stable under a CFL condition so that it ensures positivity for mass and internal energy and satisfies a discrete entropy inequality.
The treatment of the source term in \cite{padioleau2019high} allows us to
 preserve stationary solution profiles at the discrete level so that the OSLP scheme satisfies a well-balanced property (see \eg \cite{Gosse1996,Greenberg1996, LeVeque1998a, gosse_well-balanced_2000,Gosse2004,Audusse2004,
 	LukacovaMedvidova2007,Noelle2007,Diaz2007,Pelanti2008,
 	 gosse_computing_2013,Kaeppeli2014, Chandrashekar2015,Desveaux2016,Chalons2016,
 	 Touma2016,
 	 michel-dansac_well-balanced_2016,Castro2017,Chertock2018,padioleau2019high,Castro2020,
 	Luna2020,Berberich2021,Grosso2021}).
Moreover, when the Mach number that characterizes the ratio of the material velocity to the sound velocity is 
low, cell-centered finite volume methods may suffer an important loss of 
accuracy \cite{Turkel1987,Guillard1999,Guillard2004,Dellacherie2010}. This
question is connected to several delicate issues like the
influence of the mesh geometry \cite{Rieper2009,Dellacherie2010a}, the numerical diffusion 
(see for example \cite{Dauvergne2008, Dellacherie2010, chalons_all-regime_2016, Dellacherie2016, Zakerzadeh2016, Barsukow2021})
or the Asymptotic Preserving property with respect to incompressible models
 \cite{Degond2011, Cordier2012, Zakerzadeh2016,Bispen2017, Berthon2020,Dimarco2017,Boscarino2018,Bouchut2020a} and has been extensively
investigated in the literature for the past years through several
approaches (see also \cite{Paillere2000,Guillard2004,Beccantini2008,Dimarco2018,Boscheri2020,Bouchut2020,Zeifang2020,Bruel2019}).
Although it does not address the full spectrum of problems connected to the simulation of 
flows in the low Mach regime, a simple modification of the OSLP method ensures a uniform truncation error with respect to the Mach number\cite{chalons_all-regime_2016,padioleau2019high}.
The resulting FSLP algorithm we obtain performs equally concerning these  aspects. Moreover, 
it profits from all the advantages of FSLP methods over OSLP mentioned above. It is also less
 computationally expensive \hlo{in the low-Mach regime}, requiring fewer sweeps over the numerical solution to reach the same physical time. The derivation of the stability properties of the FSLP method requires novel mathematical developments that we present in this study.

The paper is organized as follows: we first introduce the set of equations with the 
thermodynamical-related hypotheses that support the stability properties of the model,
 and we present the stationary profiles and difficult regimes we will be interested in.
  Then, we will recall the OSLP method we aim to recast into its FSLP version. We will modify 
  the transport step in the original OSLP method so that both steps are revamped into one that
   can be viewed as a flux-splitting step. We will then provide proof of stability for the FSLP 
   method we obtained. We examine standard ways to extend the FSLP method to higher-order discretizations
    and multi-dimensional problems. Then we will see that the FSLP method can be connected to a new relaxation 
	approximation of the Euler equations that proposes a single-step but separate treatment of the acoustic and
	 transport effects.
Finally, we will present one-dimensional and two-dimensional numerical experiments that
 demonstrate the good behavior of the scheme.

\section{Flow model}

For the sake of clarity but without loss of generality,
we focus on one-dimensional problems. We consider the Euler equations supplemented with a smooth
 potential source term $x\mapsto \phi(x)$,

\begin{equation}
		\dt \bU + \dx \bF ( \bU ) = \sourceterm (\bU,\phi)
		,\qquad\text{for $x\in\RealSet$, $t>0$,}
		\label{Euler's eqn}
\end{equation}

with
\(\bU=(\rho, \rho u, \rho E)^T\),
\(\bF ( \bU ) = (\mom,\ u\mom+ p,\ u\rho E + p u)^T\)
and
\(\sourceterm (\bU,\phi ) = -\rho \dx \phi (0,1,u)^T\) where $\phi$ is smooth enough so that we can
 consider that $\partial_x \phi$ is also regular and bounded.

 Although \eqref{Euler's eqn} is not strictly limited to flows accounting for gravitational forces, 
 the stationary potential $x\mapsto \phi(x)$ will be referred to as the gravitational potential.
The fields $\rho$, $u$, $p$, and $E$ respectively denote the density, velocity, pressure, and specific
 total energy of the fluid. If $e=E-u^2/2$ is the specific internal energy, we define the set of admissible 
 states
\begin{equation}
\admissibleset =
\left\{
(\rho, \mom, \Etot)\in \mathbb{R}^3~\big|~\rho > 0,\ e>0\right
\}
.
\label{Def omega}
\end{equation}

Let $s$ be the specific entropy of the fluid. We consider an Equation Of State (EOS) in the form of a 
mapping $(1 / \rho,s)\mapsto e^{\EOS}(1 / \rho,s)$
that satisfies the classic Weyl assumptions~\cite{Weyl1949,chalons_all-regime_2016}:

\begin{subequations}
	\begin{align}
		\pdv{e^\EOS}{(1 / \rho)}
		&<0
		, &
		\pdv{e^\EOS}{s}
		&>0
		, &
		\pdv[2]{e^\EOS}{(1 / \rho)}
		&>0
		,
		\\
		\pdv[2]{e^\EOS}{s}
		&>0
		, &
		\qty[\pdv[2]{e^\EOS}{(1 / \rho)}]
		\qty[\pdv[2]{e^\EOS}{s}]
		&>
		\qty
		[
		\pdv{e^\EOS}{s}{(1 / \rho)}
		]^{2}
		, &
		\pdv[3]{e^\EOS}{(1 / \rho)}
		&<0
		.
	\end{align}
	\label{weyl}
\end{subequations}
The temperature $T$ and the pressure $p$ of the fluids are related to the other parameters, respectively by
$T=T^\EOS(1 / \rho,s)=\pdv*{e^\EOS}{s}$
and
$p=p^\EOS(1 / \rho,s) = -\pdv*{e^\EOS}{(1 / \rho)}$.
It is possible to define a mapping $(1 / \rho,e)\mapsto s^\EOS(1 / \rho,e)$ such that
$e=e^\EOS(1 / \rho,s)$ if $s=s^\EOS(1 / \rho,e)$ so that we have the Gibbs relation
\begin{equation}
	\dd e + p\dd (1/\rho)  = T\dd s.
	\label{eq: Gibbs}
\end{equation}
 Note that \eqref{weyl} imply that $-s^{\EOS}(1/\rho, e)$ and $e^{\EOS}(1/\rho, s)$ are strictly convex functions.
 Relations~\eqref{weyl} also ensure that
\begin{equation}
	{\pdv{p^\EOS}{(1 / \rho)}}(1/\rho,s)<0
,
\end{equation}
 so that the sound velocity
 $c = \rho^{-1}\sqrt{-\pdv*{p^\EOS(1 / \rho,s)}{(1 / \rho)}(1/\rho,s)}$
is real valued.
Let us recall now that the dimensionless quantity $\mach = |u|/c $ is called the Mach number.
We also make the classic assumption \cite{Callen1985} that
\begin{equation}
	\mathscr{M} s(\mathscr{V}/\mathscr{M},\mathscr{E}/\mathscr{M})
	=
	S(\mathscr{M}, \mathscr{V},\mathscr{E})
	,
\label{eq: specific entropy to entropy}
\end{equation}
where the (non-specific) entropy
\((\mathscr{M}, \mathscr{V},\mathscr{E})\mapsto S(\mathscr{M}, \mathscr{V},\mathscr{E})\)
is a strictly concave homogeneous first-order function. Let us note that as
$
{\pdv{S}{\mathscr{E}}} (\mathscr{M}, \mathscr{V},\mathscr{E})
= {\pdv{s}{e}}(\mathscr{V}/\mathscr{M},\mathscr{E}/\mathscr{M})
= 1/T^\EOS(\mathscr{V}/\mathscr{M},\mathscr{E}/\mathscr{M})
>0
$, then \(\mathscr{E}\mapsto S(\bar{\mathscr{M}}, \bar{\mathscr{V}},\mathscr{E})\) is a strictly increasing 
function for a fixed $\mathscr{M}$ and $\mathscr{V}$.

 Weak solutions of \eqref{Euler's eqn} also satisfy the entropy inequality
\begin{equation}
	\dt (\rho s) +  \dx  (u\rho s)  \geq 0,
	\label{inégalité entropie}
\end{equation}
 where the inequality~\eqref{inégalité entropie} is indeed
 an equality in the case of smooth solutions (see \cite{smoller,LeVeque2002,Godlewski-1990,serre}).

We also are interested in the study of particular steady-state solutions of \eqref{Euler's eqn} called the 
hydrostatic equilibria that are classically defined by
\begin{align}
	\dx p &= - \rho \dx \phi
	,&
	\ u &= 0.
	\label{hydrostatic continu}
\end{align}
For many years, significant efforts have been dedicated to developing so-called well-balanced numerical methods (see \eg \cite{Gosse1996,Greenberg1996, LeVeque1998a, gosse_well-balanced_2000,Gosse2004,Audusse2004,
	LukacovaMedvidova2007,Noelle2007,Diaz2007,Pelanti2008,
	gosse_computing_2013,Kaeppeli2014, Chandrashekar2015,Desveaux2016,Chalons2016,
	Touma2016,
	michel-dansac_well-balanced_2016,Castro2017,Chertock2018,padioleau2019high,Castro2020,
	Luna2020,Berberich2021,Grosso2021}) that allow preserving discrete equivalents of equilibrium solutions like \eqref{hydrostatic continu}.
In the present work, we intend to investigate well-balanced finite volume approximations of
\eqref{Euler's eqn} that are compatible with discrete equivalents of \eqref{inégalité entropie} and 
ensure that the fluid states $(\rho, \rho u, \rho E)$ remain in $\admissibleset$.

Before going any further, let us introduce the notations for our space-time discretization:
we consider a strictly increasing sequence $(x_{j+1/2})_{j\in\mathbb{Z}}$ and divide the
real line into cells where the $j^\text{th}$ cell is the interval
$\left(x_{j-1 / 2}, x_{j+1 / 2}\right)$. The space step of $j^\text{th}$ cell is
$\Dx_j = x_{j+1/2} - x_{j-1/2}>0$ that we suppose constant and equal to $\Delta x$ for the sake of 
simplicity. We note $\Delta t>0$ the time step such that $t^{n+1}-t^n= \Dt$ with $n \in \mathbb{N}$. 
For a given initial condition $x \mapsto \bU^{0}(x)$, we consider a discrete initial data $\bU_{j}^{0}$ 
defined by $\bU_{j}^{0}=\frac{1}{\Dx} \int_{x_{j-1 / 2}}^{x_{j+1 / 2}} \bU^{0}(x) \mathrm{d} x$, for $j \in \mathbb{Z}$.
The algorithm proposed in this paper aims at computing a first-order accurate (in both space and time) approximation of the cell-averaged values $\bU_{j}^{n}$ of $\frac{1}{\Dx} \int_{x_{j-1 / 2}}^{x_{j+1 / 2}} \bU\left(x, t^{n}\right) \dd x$ where $x \mapsto\bU\left(x, t^{n}\right)$ is the exact solution of \eqref{Euler's eqn} at time $t^{n}$ by means of a conservative finite volume discretization of \eqref{Euler's eqn} of the form
\begin{equation}
\bU_j^{n+1} - \bU_j^{n}
+
\frac{\Delta t}{\Dx}
\qty(
\bF_{j+1/2}
-
\bF_{j-1/2}
)
=
\Delta t
\sourceterm_j.
\label{eq: flux-splitting discretization}
\end{equation}

\section{The original Operator Splitting Lagrange-Projection (OSLP) strategy}\label{sect:OSLP}
Operator splitting strategies allow simpler derivation of numerical methods by solving parts of the system separately and successively. However, this requires storing intermediate state values and may also necessitate specific treatments to implement higher order extension (see, for example \cite{Pino2006,Duboc2010,Luna2020,Grosso2021}).

In this section, we recall the properties of the OSLP method presented in \cite{padioleau2019high}. It combines the all-regime method for gas dynamics proposed by \cite{chalons_all-regime_2016} and the well-balanced treatment of source terms introduced in \cite{Chalons2016} in the context of the shallow water system.
We chose to re-introduce all the discretization as the goal of the present paper is to recast this particular OSLP algorithm into a flux-splitting Lagrange-Projection (FSLP) finite volume method, using very similar expressions. We emphasize that the algorithm presented in this section is not new and comes entirely from \cite{chalons_all-regime_2016,Chalons2016,padioleau2019high} and that
the novelty of our work lies in a modification of this algorithm that will be detailed in section~\ref{recast}.
The method is based on the splitting of \eqref{Euler's eqn} into an acoustic sub-system:
\begin{subequations}
\begin{empheq}[left=\empheqlbrace]{align}
\partial_t \rho+\rho \partial_x u &=0,
\\
\partial_t(\rho u)+\rho u \partial_x u+\partial_x p &=-\rho \dx \phi,
\\
\partial_t(\rho E)+\rho E \partial_x u+\partial_x(p u) &=-\rho u\dx \phi,
\end{empheq}
\label{acoustic}
\end{subequations}
and a transport sub-system:
\begin{subequations}
\begin{empheq}[left=\empheqlbrace]{align}
\partial_t \rho+u \partial_x \rho &=0,
\\
\partial_t(\rho u)+u \partial_x(\rho u) &=0,
\\
\partial_t(\rho E)+u \partial_x(\rho E) &=0
.
\end{empheq}
\label{transport}
\end{subequations}

Given a fluid state $U^n$, this operator splitting  algorithm can be decomposed as follows.
\begin{enumerate}
	\item Update the fluid state $U^n$ to the value $U^{n+1-}$ by approximating the solution of \eqref{acoustic}:
\begin{subequations}
\begin{empheq}[left=\empheqlbrace]{align}
L_j \rho_j^{n+1-} &=\rho_j^n
,\\
L_j(\rho u)_j^{n+1-} &=(\rho u)_j^n-\frac{\Delta t}{\Delta x}\left(\Pi_{j+1 / 2}^{*,\theta}-\Pi_{j-1 / 2}^{*,\theta}\right) -\Dt \{\rho \dx\phi\}_j^n
,\\
L_j(\rho E)_j^{n+1-} &=(\rho E)_j^n-\frac{\Delta t}{\Delta x}\left(\Pi_{j+1 / 2}^{*,\theta} u_{j+1 / 2}^*-\Pi_{j-1 / 2}^{*,\theta} u_{j-1 / 2}^*\right) -\Dt \{\rho u\dx\phi\}_j^n
,\\
L_j &=1+\frac{\Delta t}{\Delta x}\left(u_{j+1 / 2}^*-u_{j-1 / 2}^*\right)
.
\end{empheq}
\label{acoustic step}
\end{subequations}
	\item Update the fluid state  $U^{n+1-}$ to the value $U^{n+1}$ by approximating the solution of \eqref{transport}: for $\varphi \in\{\rho, \rho u, \rho E\}$
	\begin{equation}
\varphi_j^{n+1}=\varphi_j^{n+1-} L_j\hlo{-}\frac{\Delta t}{\Delta x}\left(u_{j+1 / 2}^* \varphi_{j+1 / 2}^{n+1-}-u_{j-1 / 2}^* \varphi_{j-1 / 2}^{n+1-}\right)
\label{transport step split}
\end{equation}
\end{enumerate}
with the upwind choice
\begin{equation}
\varphi_{j+1 / 2}^{n+1-}
=
\begin{cases}
\varphi_j^{n+1-}, \quad\text{if  \(u_{j+1 / 2}^* \geq 0\),}
\\
\varphi_{j+1}^{n+1-}, \quad\text{if  \(u_{j+1 / 2}^*<0\),}
\end{cases}
\end{equation}
and the following formulas for the interface pressures and velocities
\begin{subequations}
\begin{empheq}[left=\empheqlbrace]{align}
u^*_{j+1/2}
&=	
\frac{\left(u_{j+1}^n+u_{j}^n\right)}{2}-\frac{1}{2 a_{j+1/2}}\left(p_{j+1}^n-p_j^n + \frac{\rho_{j+1}^n + \rho_j^n}{2} (\phi_{j+1}^n - \phi_j^n)\right)
,\\
\Pi^{*,\theta}_{j+1/2}
&=
\frac{\left(p_{j+1}^n+p_{j}^n\right)}{2}-\theta_{j+1/2}\frac{a_{j+1/2}}{2}\left(u_{j+1}^n-u_{j}^n\right),
\end{empheq}
\label{u*p*}
\end{subequations}
as well as the source terms discretization:

\begin{subequations}
	\begin{empheq}[left=\empheqlbrace]{align}
\{\rho \dx\phi\}_j^n
&=
\frac{\{\rho \dx\phi\}_{j+1/2}+\{\rho \dx\phi\}_{j-1/2}}{2}
,\\
\{\rho u \dx\phi\}_j^n
&=
\frac{u^*_{j+1/2}\{\rho \dx\phi\}_{j+1/2}+u^*_{j-1/2}\{\rho \dx\phi\}_{j-1/2}}{2}
,\\
\{\rho \dx\phi\}_{j+1/2}
&=
\frac{\rho_{j+1}^n+\rho_j^n}{2}\frac{\phi_{j+1}-\phi_{j}}{\Dx}
.
\end{empheq}
\end{subequations}
The constant parameter $a_{j+1/2}$ is a local choice of an approximate acoustic impedance $a$ associated with each interface $j+1/2$. It should be chosen large enough so that \eqref{acoustic CFL} is satisfied, guaranteeing stability for the acoustic step. In practice, we choose
\begin{equation}
	a_{j+1 / 2}=K \max \left(\rho_{j}^{n} c_{j}^{n}, \rho_{j+1}^{n} c_{j+1}^{n}\right)\qquad \text{ with \(K > 1\)}.
	\label{def a}
\end{equation}
In the tests of section~\ref{numerical exp} we will use $K=1.1$.

The parameter $\theta$ enables the implementation of a low-Mach flux correction that ensures a control of the numerical diffusion
in the momentum equation.
This simple strategy is modeled after \cite{Dauvergne2008, Dellacherie2010,Dellacherie2016}.
Depending on the choice of $\theta$, this correction takes effect whenever $\mach < 1$. In our case, 
its sole purpose is to help preserve the accuracy
in the low-Mach regions of the computational domain by providing a uniform control of the truncation error with respect to $\mach$.
We need to emphasize that this approach does not aim at addressing the full complexity of simulating flows in the low-Mach regime that has been widely investigated in the literature and spans for example: from the study of the influence of the grid \cite{Rieper2009,Dellacherie2010a}, the potential development of spurious modes\cite{Dellacherie2009,Jung2022}, the development of asymptotic preserving methods
 \cite{Degond2011, Cordier2012, Zakerzadeh2016,Bispen2017, Berthon2020,Dimarco2017,Boscarino2018,Bouchut2020a}, implicit-explicit methods
\cite{chalons_all-regime_2016,Dimarco2018,Boscheri2020,Bouchut2020,Zeifang2020} multi-dimensional control of the numerical diffusion \cite{Barsukow2021}, use of preconditioning methods \cite{Turkel1987,Guillard1999,Paillere2000,Guillard2004,Beccantini2008} to the study of acoustics in low-Mach regime \cite{Bruel2019}.

The discretization of the gravitational source term allows to exactly preserve the following discrete equivalent of the hydrostatic equilibrium \eqref{hydrostatic continu}:
\begin{align}
	\Pi^n_{j+1} - \Pi^n_j
	&=
	-\frac{\rho^n_{j+1} + \rho^n_j}{2} (\phi_{j+1} - \phi_j)
	,&
	u^n_j &= 0
	,&\forall j &\in \mathbb{Z}, \forall n\in\mathbb{N}.
	\label{eq profile 1st order}
\end{align}
Note that the resolution of the acoustic system is performed via a Suliciu-type relaxation \cite{Suliciu1998,Bouchut2004,chalons2008,coquel2012} following 
\cite{chalons_all-regime_2016,Chalons2016}. Both steps can be rewritten as a fully conservative update formula:
\begin{subequations}
\begin{empheq}[left=\empheqlbrace]{align}
\rho_j^{n+1} &=\rho_j^n\hlo{-}\frac{\Delta t}{\Delta x}\left(u_{j+1 / 2}^* \rho_{j+1 / 2}^{n+1-}-u_{j-1 / 2}^* \rho_{j-1 / 2}^{n+1-}\right)
,\\
(\rho u)_j^{n+1} &=(\rho u)_j^n\hlo{-}\frac{\Delta t}{\Delta x}\left(u_{j+1 / 2}^*(\rho u)_{j+1 / 2}^{n+1-}+\Pi_{j+1 / 2}^{\theta,*}-u_{j-1 / 2}^*(\rho u)_{j-1 / 2}^{n+1-}-\Pi_{j-1 / 2}^{\theta,*}\right)-\Dt \{\rho \dx\phi\}_j^n
,\\
(\rho E)_j^{n+1} &=(\rho E)_j^n\hlo{-}\frac{\Delta t}{\Delta x}\left(u_{j+1 / 2}^*(\rho E)_{j+1 / 2}^{n+1-}+\Pi_{j+1 / 2}^{\theta,*} u_{j+1 / 2}^*-u_{j-1 / 2}^*(\rho E)_{j-1 / 2}^{n+1-}-\Pi_{j-1 / 2}^{\theta,*} u_{j-1 / 2}^*\right)-\Dt \{\rho u \dx\phi\}_j^n
.
\end{empheq}
\label{split method}
\end{subequations}
 The scheme~\eqref{split method} is proven to be positivity preserving for the density and the internal
 energy as well as entropy stable when $\Delta t$
 verifies both
  the acoustic CFL condition:
\begin{equation}
\frac{\Delta t}{\Delta x} \max _{j \in \mathbb{Z}}\left(\max \left(1/\rho_j^n, 1/\rho_{j+1}^n\right) a_{j+1 / 2}\right) \leq \frac{1}{2},
\label{acoustic CFL}
\end{equation}
and the transport CFL condition:
\begin{equation}
\Delta t \max _{j \in \mathbb{Z}}\left(\left(u_{j-\frac{1}{2}}^*\right)^{+}-\left(u_{j+\frac{1}{2}}^*\right)^{-}\right)<\Delta x ,
\label{transport CFL}
\end{equation}
granted that the following inequality:
\begin{equation}
-\frac{1}{2 a^2}\left(p^{\operatorname{EOS}}\left(\tau_k^{*, \theta}, s_k\right)-\Pi^*\right)^2+\frac{(1-\theta)^2\left(u_{j+1}-u_j\right)^2}{8} \leq 0, \quad k=j, j+1,
\label{Co split}
\end{equation}
 where $\tau_j^{*, \theta}=1/\rho_j^n+\frac{1}{a_{j+1/2}}\left(u^*_{j+1/2}-u_j^n\right)$ and $\tau_{j+1}^{*, \theta}=1/\rho_{j+1}^n+\frac{1}{a_{j+1/2}}\left(u_{j+1}^n-u^*_{j+1/2}\right)$
is satisfied at each interface $j+1/2$.
\hlb{Just like in the original OSLP paper \cite{chalons_all-regime_2016}, the inequality \eqref{Co split} is not ensured 
by any mechanism in the numerical scheme.
As a result, for small values of $\theta$, we cannot guarantee that inequality \eqref{Co split} remains valid. 
This is a known issues of the low-Mach correction proposed in \cite{chalons_all-regime_2016} that is not adressed in the present study.
Let us emphasize that entropy stability can be achieved through alternative criteria 
(see  \cite{Gallice2003} and \cite{Chan2021}), however the study of their performance in the low Mach regime is beyond the scope of this paper.}

In section~\ref{recast}, we discuss how a simple modification of the transport step allows recasting this two-step OSLP algorithm into a one-step FSLP method while keeping the interesting properties of the original method: the well-balanced property, the accuracy in the low Mach regime, mass, and energy positivity and the discrete entropy inequality.

\section{Recasting the OSLP method into a flux-splitting Lagrange-Projection (FSLP) method; a modification of the transport step}\label{recast}

In this section, we discuss how a simple modification of the transport step \eqref{transport step split} 
of the OSLP method \eqref{split method} proposed by \cite{Chalons2016} leads to a much simpler FSLP algorithm.
Flux-splitting methods have been used in many application contexts thanks to their ease of
 implementation that relies on building a discrete evaluation of the fluxes
  (see, for example, \cite{liu_computation_1998,bruneau_time-accurate_1998,evje_hybrid_2002,
 paillere_extension_2003,garcia-cascales_application_2006}).
These methods have been extensively developed for several decades (see, for
 example, \cite{steger_flux_1981,zha1993,liou1993,jameson_analysis_1995,jameson_analysis_1995,
 lioua,liou_sequel_2006,liou_sequel_1996,bouchut_entropy_2003,Toro2012} and the 
 references therein) yielding efficient simulation tools. Unfortunately, deriving theoretical 
 results that ensure the good behavior of these methods is difficult, which contrasts with their good performance in practice. Before going any further, let us mention that the question of building Eulerian numerical fluxes relying on a Lagrangian approximation of the flow equations has been successfully investigated in the literature with different approaches \cite{Dubroca1999, Gallice2000,Gallice2003,bouchut_entropy_2003,Chan2021}.

A key contribution of the present paper is the derivation of stability properties for the flux-splitting algorithm.
These proofs are based on the following observation; let us consider a given hyperbolic problem with a source term for which the set of admissible states is convex (\eg Euler's equations of gas dynamics or ideal Magneto-hydrodynamics);

\begin{equation}
		\dt \bU + \dx \bF ( \bU ) = S (\bU).
        \label{eqexemple}
\end{equation}

We design a separation of the flux and source term into $N$ parts $(F_p,S_p)_{1\leq p\leq N}$ so that:
\begin{align}
    \sum_{p=1}^{N}F_p(\bU)&=F(\bU),& \sum_{p=1}^{N}S_p(\bU)&=S(\bU),
\end{align}
as well as a series of coefficients $\alpha^p_j\in]0,1[$ that sums up to 1; $\sum_{p=1}^{N}\alpha^p_j=1$ 
for each cell $j$. Let us assume that we can build a discretization for each part where the sub-fluxes  and sub-source terms are multiplied by the inverses of the coefficients. 
\hlo{ This allows to consider partially updated value or sub-updated value  $U_j^{p,n+1}$ of the initial state $U_j^n$ due to the influence of to the $p-th$ flux and source term, obtaining the $p-th$ sub-update}:
\begin{equation}
    \frac{\bU^{p,n+1}_j-\bU^{n}_j}{\Dt} - \frac{1}{\alpha_j^p}[\dx F_p(\bU)]_j = \frac{1}{\alpha^p_j}[S_p(\bU)]_j \ \ \ \forall p\in[1,N].
    \label{sub updates}
\end{equation}
Moreover, let us assume that each of these discretizations is stable under their respective 
local CFL condition:
\begin{equation}
	\Dt < \alpha_j^p \frac{\Dx}{v_p^j}\
\end{equation}
where $v_p^j$ is the local characteristic velocity associated with the discretization 
of the $p$-th flux/source term.
By re-assembling the result of each part with the convex combination defined by the coefficients $\alpha^p$,
\begin{equation}
    \bU^{n+1}_j := \sum_{p=0}^{N}\alpha_j^pU^{p,n+1}_j
		\label{defupdate}
\end{equation}
we obtain a discretization consistent with \eqref{eqexemple}, regardless of the value of the
 coefficients $\alpha_j^p\in (0,1) $. The full update is stable as a convex combination of
  the stable sub-updates \eqref{sub updates}. This means we can freely choose the 
  coefficients $\alpha_j^p$ to optimize the CFL condition. Indeed, the update \eqref{defupdate}
   is stable as long as each sub-update is stable i.e.:
\begin{equation}
	\Dt < 
		\min\left(\alpha_j^1 \frac{\Dx}{v_j^1},\ldots,\alpha_j^N \frac{\Dx}{v_j^N}\right)
		.
\end{equation}
For $p=1,\ldots,N$, let us now choose
$\frac{\alpha_j^p}{v_p}=\frac{1}{v^1_j+v^2_j+\cdots+v^N_j}$,
then $\underset{p}{\min}\left(\frac{\alpha_j^p}{v^p_j}\right)=\underset{p}{\min}\left(\frac{1}{v^1_j+v^2_j+\cdots+v^N_j}\right)=\frac{1}{v^1_j+v^2_j+\cdots+v^N_j}$.
This provides the following local CFL condition:
\begin{equation}
	\Dt < \frac{\Delta x}{v^1_j+v^2_j+\cdots+v^N_j}
	.
	\label{CFL general flux}
\end{equation}
In this work, we separate the system into $N=2$ parts corresponding to the pressure and advection terms. This type of splitting is not new and can be found in \cite{zha1993,deshpande_pvu_1994,Toro2012,borah_novel_2016} without entropy stability theorems.
Discretization techniques that also feature a separate treatment for the pressure and advection effects have been proposed for fractional step methods \cite{baraille1992,buffard_conservative_1997,fort_large_2011,coquel_splitting_2012,chalons_all-regime_2016,Chalons2016,Chalons2017,padioleau2019high}.

By modifying the transport step of the original operator splitting algorithm \eqref{split method} by computing the fluxes on the initial states $n$ instead of the acoustic state $n+1-$:

\begin{equation}
\varphi_j^{n+1}=\varphi_j^{n+1-} L_j-\frac{\Delta t}{\Delta x}\left(u_{j+1 / 2}^* \varphi_{j+1 / 2}^{n}-u_{j-1 / 2}^* \varphi_{j-1 / 2}^{n}\right)
\label{transport step unsplit}
\end{equation}
we obtain the following fully conservative update that we refer to as our FSLP method:

\begin{equation}
\left\{\begin{aligned}
\rho_j^{n+1} &=\rho_j^n\hlo{-}\frac{\Delta t}{\Delta x}\left(u_{j+1 / 2}^* \rho_{j+1 / 2}^{n}-u_{j-1 / 2}^* \rho_{j-1 / 2}^{n}\right) \\
(\rho u)_j^{n+1} &=(\rho u)_j^n\hlo{-}\frac{\Delta t}{\Delta x}\left(u_{j+1 / 2}^*(\rho u)_{j+1 / 2}^{n}+\Pi_{j+1 / 2}^{\theta,*}-u_{j-1 / 2}^*(\rho u)_{j-1 / 2}^{n}-\Pi_{j-1 / 2}^{\theta,*}\right)-\Dt \{\rho  \dx\phi\}_j^n, \\
(\rho E)_j^{n+1} &=(\rho E)_j^n\hlo{-}\frac{\Delta t}{\Delta x}\left(u_{j+1 / 2}^*(\rho E)_{j+1 / 2}^{n}+\Pi_{j+1 / 2}^{\theta,*} u_{j+1 / 2}^*-u_{j-1 / 2}^*(\rho E)_{j-1 / 2}^{n}-\Pi_{j-1 / 2}^{\theta,*} u_{j-1 / 2}^*\right)-\Dt \{\rho u \dx\phi\}_j^n .
\end{aligned}\right.
\label{unsplit method}
\end{equation}
Note that we keep the upwind choice for the transport scheme:
\begin{equation}
\varphi_{j+1 / 2}^{n}=\left\{\begin{array}{l}
\varphi_j^{n}, \text { if } u_{j+1 / 2}^* \geq 0, \\
\varphi_{j+1}^{n}, \text { if } u_{j+1 / 2}^*<0,
\end{array}\right.
\label{upwind}
\end{equation}
where $(u,\Pi)^*$ are given by \eqref{u*p*}. We provide the CFL condition associated with the new method:
\begin{equation}
\frac{\Delta t}{\Delta x}  \max _{j \in \mathbb{Z}}\left(2\max \left(1/\rho_j^n, 1/\rho_{j+1}^n\right) a_{j+1 / 2} + \left(u_{j-\frac{1}{2}}^*\right)^{+}-\left(u_{j+\frac{1}{2}}^*\right)^{-} \right)< 1
	\label{CFL unsplit}
\end{equation}
This CFL condition is indeed of the form \eqref{CFL general flux} with $N=2$. It has the same characteristic speeds as the acoustic condition in \eqref{acoustic CFL} and the transport condition in \eqref{transport CFL}, except that they are summed rather than checked separately. \hlo{As a result, \eqref{CFL unsplit} is generally more 
restrictive than conditions \eqref{acoustic CFL}, \eqref{transport CFL}}.
The new method has several advantages compared to the original numerical scheme \eqref{split method}:

\begin{enumerate}
	\item The implementation of the flux-splitting version is much simpler than the operator-splitting version. Indeed, it can be implemented as a standard, simple flux-based finite volume method with the following numerical flux formula:
	\begin{equation}
		\mathbf{F}^{\text{FSLP}}(U_L,U_R) =
		\left\{\begin{aligned} u^*&\rho_{LR} \\ u^*&(\rho u)_{LR} + \Pi^{*,\theta}\\u^*&(\rho E)_{LR} + \Pi^{*,\theta}u^*\end{aligned}\right.
		\label{RS}
	\end{equation}
	with
	\begin{equation}
	\varphi_{LR} =
	\begin{cases}
	\varphi_L& \text{ if  \(u^*>0\),} \\
	\varphi_R& \text{ otherwise.}
	\end{cases}
	\label{upwind flux}
	\end{equation}
	We can see in \eqref{RS} that the flux evaluation clearly separates the pressure-related terms 
	from the advection terms so that it can be affiliated with a family of methods proposed in the
	literature like \cite{zha1993,deshpande_pvu_1994,Toro2012,borah_novel_2016}.
	\item As the method can be implemented as a simple flux-based solver, it can be seamlessly combined with any existing flux-based high-order algorithm such as MUSCL\cite{Leer1977,Leer1977a,Leer1979,toro}, (W)ENO \cite{Liu1994,Jiang1996} or MOOD methods \cite{diot2013multidimensional, clain2011high}. We detail the procedure for the extension to second order in section \ref{MUSCL} and give some numerical examples in section \ref{numerical exp}. Note, however, that the well-balanced treatment of gravity is not straightforward to extend to high order and requires a careful examination that is beyond the scope of this paper. Also, using the low-Mach correction $\theta$ combined with a highly accurate high-order method can amplify numerical instabilities that already exist at first-order (checkerboard modes, for example). We do not address this issue in this paper, as our focus is on demonstrating the recasting of the OSLP method into the FSLP method.

	\item The FSLP method is more computationally efficient than the original OSLP method. 
The OSLP method requires two update loops per time step to compute a time step of 
size $\sim \Dx/\max(v,c)$, where $v$ and $c$ are the velocities associated with transport
 and acoustic effects, respectively, as they appear in the CFL conditions.
In contrast, the FSLP method only requires one loop per time step of size 
$\sim \Dx/(v+c)$. This means that the FSLP method requires fewer sweeps to reach the same 
physical time, especially in the low-Mach regime where $v \ll c$ or in the hypersonic regime 
where $v \gg c$, where it is \hlo{expected to be  more efficient}. If $v = c$, both methods \hlo{should have a comparable efficiency}. \hlo{We provide a 
performance analysis and discussion in section \ref*{sect:perf}.}

	\item The new update formula eliminates the need to store the intermediate state $U^{n+1-}$, as it can be
	 computed in a single sweep. This reduces the algorithm's memory footprint by approximately $2/3$, and 
	 reduces the stencil radius from two to one cell. The decrease in memory storage requirements can improve 
	 performance by reducing the time spent accessing the data arrays.

\end{enumerate}

Despite the update formula being very similar, the mathematical background required to derive the stability 
properties of \eqref{unsplit method} is new. It is the object of the next section \ref{stability}.

\section{Derivation of the stability properties for our new method}\label{stability}

In this section, we focus on deriving the stability properties of our new FSLP scheme \eqref{unsplit method}. 
To this end, we will perform a Suliciu-type relaxation \cite{Suliciu1998,Bouchut2004,chalons2008,coquel2012} of 
the pressure term and introduce a surrogate specific volume. We then isolate two new sub-systems, the advection
 and pressure sub-systems, for which we derive numerical fluxes. We then re-obtain our new method and derive its 
 stability properties by performing a convex combination of the two fluxes. Note that the proof of stability for
  the pressure subsystem is similar to the acoustic sub-system in \cite{chalons_all-regime_2016}. For this 
  reason, we only recall this proof in the appendix for completeness.

\subsection{Relaxation and flux-splitting}
We first apply a relaxation of the original Euler system.
Manipulations of smooth solutions of \eqref{Euler's eqn} gives
 $\dt (\rho p)+ \dx(u\rho p)+ \rho^2 c^2 \dx u = 0$ . We choose to perform a Suliciu-type approximation of the 
 system \eqref{Euler's eqn} for $t\in [t^n, t^{n+1})$ by introducing a surrogate pressure $\Pi$ and considering
  the relaxed system:

 \begin{equation}
		\left\{\begin{aligned}
				\dt \rho+\dx(\mom) &=0, \\
				\dt (\mom)+\dx\left(u\mom+\Pi\right) &=-\rho\dx\phi, \\
				\dt (\rho E)+\dx(u\rho E+\Pi u)&=-\rho u \dx\phi,\\
				\dt (\rho \Pi) + \dx(u \rho \Pi + a^2u) &= \rho \lambda(p - \Pi).
		\end{aligned}\right.
		\label{Euler-Sulicu}
\end{equation}

The parameter $\lambda$ is a frequency that characterizes the strength of the source term that drives 
$\Pi$ towards the equilibrium $\Pi = p$. In the regime $\lambda \rightarrow \infty$, we formally recover
 \eqref{Euler's eqn}. In our numerical solver context, we classically mimic the $\lambda \rightarrow \infty$ 
 regime by enforcing  $\Pi_j^n=p^{\text{EOS}}(1/\rho^n_j,e_j^n) $ at each time step and then solving 
 \eqref{Euler-Sulicu} with $\lambda =0$, which will be the case in all computations below without any ambiguities. 
 We now introduce another auxiliary variable $\Tau$ and impose that it verifies
     \begin{equation}
         \dt (\rho\Tau)  = 0.
         \label{evolution rhoTau}
     \end{equation}
We suppose that $\Tau(t=0) = 1/\rho(t=0)$ at the initial instant so that $\Tau(x,t)$ is equal to the specific 
volume \(1/\rho(x,t) \) for all $x$ and $t>0$. Let us now re-write the 
system~\eqref{Euler-Sulicu}-\eqref{evolution rhoTau} in order to highlight
three different operators  that compose the flux and the source term of
 \eqref{Euler-Sulicu}-\eqref{evolution rhoTau} following similar lines as \cite{zha1993,deshpande_pvu_1994,borah_novel_2016}
\begin{align}
\dt
\begin{pmatrix}
	\rho\\ \rho u \\ \rho E \\ \rho\Pi \\ \rho\Tau
\end{pmatrix}
+
\dx
\begin{pmatrix}
	\rho u\\ \rho u^2 \\ \rho E u \\ \rho\Pi u \\ u
\end{pmatrix}
+
\dx
\begin{pmatrix}
	0 \\ \Pi \\ \Pi u \\ a^2 u \\ -u
\end{pmatrix}
=
-
\begin{pmatrix}
	 0 \\ \rho  \\ \rho u \\ 0 \\ 0
\end{pmatrix}
\dx \phi.
\label{eq: operators highlight}
\end{align}
Let us underline that both $\Pi$ and $\rho\Tau$ are only mathematical intermediates used to derive the 
scheme's stability properties. Indeed, these variables
do not appear in the update formula \eqref{unsplit method}, so that there is no need
to evaluate and store them while implementing the algorithm. Let us introduce the convex combination
 parameter $\alpha\in ]0,1[ $ and
 two subsystems associated with different parts of the fluxes and source terms featured in
  \eqref{eq: operators highlight}. The first system gathers the source term and the flux associated with 
  pressure terms ponderated by $1/\alpha$
	\begin{equation}
	\left\{\begin{aligned}
		\dt \rho&=0, \\
		\dt (\mom)+\aone\dx\left(\Pi\right) &=-\aone\rho\dx\phi, \\
		\dt (\rho E)+\aone\dx(\Pi u)&=-\aone\rho u \dx\phi,\\
		\dt (\rho \Pi) + \aone\dx( a^2u) &= 0,\\
		\dt (\rho\Tau ) - \aone\dx u &= 0.
	\end{aligned}\right.
	\label{pressure system}
\end{equation}
We will refer to \eqref{pressure system} as the pressure system. The second sub-system is 
composed of the remaining terms that pertain to transport effects ponderated by $1/1-\alpha$, it reads
\begin{equation}
	\left\{\begin{aligned}
		\dt (\rho\varphi) + \atwo\dx (u\rho \varphi ) &= 0,\qquad \text{$\varphi \in \{1, u,  E,  \Pi\}$} \\
		\dt (\rho\Tau) + \atwo\dx u &= 0,
	\end{aligned}\right.
	\label{advection system}
\end{equation}
and will be called the advection system.

The pressure system~\eqref{pressure system} is hyperbolic and involves the characteristic velocities $\{\pm \aone a/\rho, 0, 0, 0\}$ that are all associated with linearly degenerate fields. The advection system~\eqref{advection system} is only weakly hyperbolic as its Jacobian matrix admits $(1-\alpha)u$ as multiple eigenvalues but is not diagonalizable. Nevertheless, let us underline that the algorithms we will consider
for approximating  the solutions of \eqref{advection system} will verify a local maximum principle under a CFL condition so that stability will be ensured for the advection step (see section~\ref{sect:cvx}).

Before continuing, let us comment on equations \eqref{pressure system} and \eqref{advection system}. The factors $\alpha$ and $1-\alpha$ that appear in the fluxes and source terms of these equations correspond to the case $N=2$ of the flux splitting stability argument presented at the beginning of section \ref{recast}.

Then, although
the trivial equation stationary~\eqref{evolution rhoTau} is now split into two non-stationary
parts within \eqref{pressure system} and \eqref{advection system}, the overall scheme will
indeed guarantee that $(\rho \Tau)_j^n=1$ for $j \in \mathbb{Z}$ and $n\in \mathbb{N}$.

 \subsection{The convex combination}\label{sect:cvx}
 We propose the following discretization strategy:

 \begin{enumerate}
     \item Compute $\bm{U}_j^P$ as the update of the initial state $\bm{U}_j^{n}$ by approximating the solution of \eqref{pressure system}:
    \begin{equation}
    \left\{\begin{aligned}
    \rho_{j}^P &=\rho_{j}^{n},\\
    (\mom)_{j}^P &=(\mom)_{j}^{n}-\frac{1}{\alpha}\frac{\Dt}{\Dx}\left(\Pi_{j+1 / 2}^{*,\theta}-\Pi_{j-1 / 2}^{*,\theta}\right)-\frac{1}{\alpha}\Dt \left\{\rho \dx \phi\right\}_j^n, \\
    (\rho E)_{j}^P &=(\rho E)_{j}^{n}-\frac{1}{\alpha}\frac{\Dt}{\Dx}\left(\Pi_{j+1 / 2}^{*,\theta} u_{j+1 / 2}^{*}-\Pi_{j-1 / 2}^{*,\theta} u_{j-1 / 2}^{*}\right)-\frac{1}{\alpha}\Dt \left\{\mom\dx \phi\right\}_j^n,\\
    (\rho \Pi)_j^P&= (\rho \Pi)_j^n -\frac{1}{\alpha}\frac{\Dt}{\Dx}\left(a_{j+1/2}^2u^*_{j+1/2}-a_{j-1/2}^2 u^*_{j-1/2}\right),\\
    (\rho \Tau)_{j}^P&=1+\frac{1}{\alpha}\frac{\Dt}{\Dx}\left(u_{j+1 / 2}^{*}-u_{j-1 / 2}^{*}\right).
    \label{pressure step}
\end{aligned}\right.
\end{equation}

       \item Compute $\bm{U}_j^A$ as the update of the initial state $\bm{U}_j^{n}$ by approximating the solution of \eqref{advection system}: for $\varphi \in \{1, u,  E,  \Pi\}$
			 \begin{equation}
			 \left\{\begin{aligned}
			 		(\rho\varphi)_j^{A}=(\rho\varphi)_j^{n} -\frac{1}{1-\alpha}\frac{\Delta t}{\Delta x}\left(u_{j+1 / 2}^* (\rho\varphi)_{j+1 / 2}^{n}-u_{j-1 / 2}^* (\rho\varphi)_{j-1 / 2}^{n} \right),\\
					(\rho \Tau)^A_j= (\rho \Tau)^n_j -\frac{1}{1-\alpha}\frac{\Delta t}{\Delta x}\left(u_{j+1 / 2}^* -u_{j-1 / 2}^*\right).
										\end{aligned}\right.
	\label{advection step}
	 \end{equation}
     \item Evaluate $\bm{U}_j^{n+1}$ as the convex combination of $\bm{U}_j^P$ and $\bm{U}_j^A$:
		 \begin{equation}
		 \bm{U}_j^{n+1}= \alpha\bm{U}_j^P+(1-\alpha)\bm{U}_j^A
		 \label{average}
		 \end{equation}
 \end{enumerate}

It can be verified that the update in \eqref{average} is equivalent to the FSLP scheme in \eqref{unsplit method}, for any value of $\alpha \in ]0,1[$. This means that the flux of the FSLP scheme can be expressed as an arbitrary convex combination of the fluxes involved in the update. As explained at the beginning of section \ref{recast}, this interpretation allows us to choose $\alpha$ optimally in order to obtain the least restrictive CFL condition, given by \eqref{CFL unsplit}.
\begin{remark}
	\hlo{The original operator splitting method proposed by \cite{chalons_all-regime_2016} 
	coincides with a Lagrange-Projection scheme when used in a 1D context. As a result, 
	the Lagrange-Projection appellation is used to design finite 
	volume, acoustic/transport operator splitting methods for various hyperbolic systems 
	in the literature. However, for 2D problems, the OSLP method does not
	correspond to a Lagrange-Projection method despite sharing similarities with the 1D 
	version. Also, Lagrange-Projection methods are operator-splitting methods consisting of a
	Lagrange step and a projection step. Our method does not split operators but fluxes, so we doubt 
	it can still be interpreted as a Lagrange-Projection method. However, we choose to keep the 
	appellation as FSLP inherits its formula from the line of work stemming from the 
	Lagrange-Projection literature.}
\end{remark}

\subsection{Stability of the pressure step}

In this section, we prove the stability of the pressure step. We chose to move all the derivations in the appendix as the arguments we use are already present in \cite{Chalons2016} in the proof of the stability of the acoustic step \eqref{acoustic step} of the OSLP method \eqref{split method}. We introduce the CFL condition associated with the pressure step.

\begin{equation}
	\frac{1}{\alpha}\frac{\Delta t}{\Delta x} \max _{j \in \mathbb{Z}}\left( \max \left(1/\rho_j^n, 1/\rho_{j+1}^n\right) a_{j+1 / 2}\right) \leq \frac{1}{2},
\label{pressure CFL}
\end{equation}
It is identical to the acoustic CFL \eqref{acoustic CFL} but $1/\alpha$ times as restrictive.
\begin{proposition}\label{prop:stab pressure}

Suppose that $a$ is chosen large enough so that \eqref{def a} is verified and that both $\Tau_L^*>0$, $\Tau_R^* >0$ from \eqref{eq: Tau_k^* def} are positive. Suppose also that the low-Mach correction $\theta$  is chosen large enough so that \eqref{inegalité sur Co} is valid. Under the CFL condition~\eqref{pressure CFL} we have that:
\begin{enumerate}
  \item the density and the internal energy verify $\rho^P_j>0$ and $e^P_j>0$,
  for all $j$,
		\item the discretization \eqref{pressure step} satisfies the entropy inequality
		\begin{equation}
					\rho^P_js^{\text{EOS}}(\Tau^P_j,e^P_j) - \rho^n_js(1/\rho_j^n,e^n_j) + \frac{1}{\alpha}\frac{\Dt}{\Dx}( q_{j+1/2}^n -  q_{j-1/2}^n ) \geq 0,
					\label{ineq pressure}
				\end{equation}
				with $q^n_{j+ 1/2}=q_\Delta(\bU^n_j,\bU^n_{j+1})$, where $q_\Delta$ is a flux function consistent with $0$ as $\Dt,\Dx\to{}0$.
\end{enumerate}

		\begin{proof}
			 The positivity of the internal energy and the entropy inequality of \eqref{ineq pressure} are direct consequences of the approximate Riemann solver properties of proposition~\ref{prop: e*>0 and s*>s modified scheme} and the consistency in the integral sense~\cite{Bouchut2004}.
		\end{proof}

\end{proposition}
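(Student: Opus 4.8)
The plan is to reduce Proposition~\ref{prop:stab pressure} to an approximate Riemann solver statement, just as the authors announce in their short proof. First I would recall the structure of the pressure step~\eqref{pressure step}: since $\rho^P_j = \rho^n_j$, density positivity is immediate. For the remaining quantities, the idea is that the update~\eqref{pressure step} at cell $j$ can be written as the cell-average of a juxtaposition of self-similar Riemann solutions posed at each interface $j\pm 1/2$, each built from the Suliciu-type relaxation of the pressure subsystem~\eqref{pressure system}. This is the content of the referenced Proposition~\ref{prop: e*>0 and s*>s modified scheme} (the approximate Riemann solver for the pressure subsystem), which one would invoke to obtain: (i) the intermediate states $\Tau^*_L,\Tau^*_R$ are well-defined and, under hypothesis, positive; (ii) the internal energy of each intermediate state is positive; (iii) each intermediate state satisfies the relaxed entropy inequality, i.e. $\rho^* s^{\EOS}(\Tau^*,e^*) \ge$ the transported value, with an associated entropy flux $q_\Delta$ consistent with zero.

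Next I would carry out the averaging argument. Under the CFL condition~\eqref{pressure CFL}, which is exactly the half-speed condition $\frac{1}{\alpha}\frac{\Dt}{\Dx}\max(1/\rho^n_j,1/\rho^n_{j+1})\,a_{j+1/2}\le\frac12$, the fan emanating from interface $j-1/2$ does not cross $x_{j-1/2}+\Dx/2$ and the fan from $j+1/2$ does not cross $x_{j+1/2}-\Dx/2$, so the two neighbouring Riemann solutions do not interact inside cell $j$. Integrating the exact (self-similar) solution of the relaxed system over cell $j$ then reproduces exactly the update~\eqref{pressure step}; this is the standard Harten–Lax–van Leer "consistency in the integral sense" of~\cite{Bouchut2004}. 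Positivity of $e^P_j$ follows because the internal energy is a concave function of the conservative variables and $\bU^P_j$ is a convex combination of the intermediate states, each of which has positive internal energy by the Riemann solver property. Likewise, $\rho^P_j s^{\EOS}(\Tau^P_j,e^P_j)$ is controlled from below by the convex-combination/Jensen argument applied to the concave map $(\rho,\rho u,\rho E,\rho\Tau)\mapsto \rho s$, yielding~\eqref{ineq pressure} with $q^n_{j+1/2}=q_\Delta(\bU^n_j,\bU^n_{j+1})$; consistency of $q_\Delta$ with $0$ comes from the fact that the relaxed entropy flux vanishes when the left and right states coincide.

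The delicate points are the ones already flagged in the hypotheses. The positivity of $\Tau^*_L,\Tau^*_R$ is not automatic: from the definitions $\Tau^{*,\theta}_j = 1/\rho^n_j + \tfrac{1}{a_{j+1/2}}(u^*_{j+1/2}-u^n_j)$ and its mirror, a large velocity jump can drive these negative, so it must be assumed (equivalently, a stronger lower bound on $a$). The second subtlety is inequality~\eqref{inegalité sur Co} (the analogue of~\eqref{Co split}), which is needed so that the relaxed entropy of the intermediate states dominates the physical entropy despite the low-Mach modification $\theta$; as the authors stress, no mechanism in the scheme enforces it for small $\theta$, so it enters as a hypothesis. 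Granting these two conditions, the remaining work is bookkeeping: checking that the intermediate-state formulas coming from the relaxation Riemann solver match~\eqref{pressure step} term by term, and verifying the concavity/homogeneity properties of $(\mathscr M,\mathscr V,\mathscr E)\mapsto S$ from~\eqref{eq: specific entropy to entropy} that license the Jensen step. I expect the main obstacle in writing this out cleanly to be the precise statement and verification of the approximate Riemann solver for~\eqref{pressure system} with the $\theta$-correction and the source term — i.e. getting Proposition~\ref{prop: e*>0 and s*>s modified scheme} exactly right — which is precisely why the authors relegate it to the appendix and only cite it here.
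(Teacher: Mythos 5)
Your proposal follows exactly the paper's route: it reduces the statement to the properties of the modified approximate Riemann solver of Proposition~\ref{prop: e*>0 and s*>s modified scheme} ($e_k^{*,\theta}>0$ and $s_k^{*,\theta}\geq s_k$ under \eqref{inegalité sur Co} and $\Tau^*_{L,R}>0$) combined with consistency in the integral sense, the half-CFL non-interaction of the fans, and the concavity/Jensen arguments of Lemma~\ref{Omega is convex}. You have simply written out the standard cell-averaging details that the paper's one-line proof leaves implicit, so the approach is the same and correct.
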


 The condition \eqref{inegalité sur Co} is identical to the OSLP low-Mach stability condition \eqref{Co split} but with the surrogate density $\Tau$ instead of $1/\rho$. In practice, conditions for stability for the pressure update are strictly the same as for the acoustic step \eqref{acoustic step} from \cite{chalons_all-regime_2016} apart from the factor $1/\alpha$ in the CFL condition.

\subsection{Stability of the advection step}
We introduce the CFL condition associated with the advection step.

\begin{equation}
	\frac{1}{1-\alpha}\frac{\Delta t}{\Delta x} \max _{j \in \mathbb{Z}}  \left( \left(u_{j-\frac{1}{2}}^*\right)^{+}-\left(u_{j+\frac{1}{2}}^*\right)^{-}\right)<1 ,
\label{CFL Advection}
\end{equation}
It is identical to the transport CFL \eqref{transport CFL} but $1/(1-\alpha)$ times as restrictive.
\begin{proposition}\label{prop:stab advection}
Under the CFL condition \eqref{CFL Advection}, the discretization~\eqref{advection step} of the advection subsystem verifies the following properties.
\begin{enumerate}
	\item $\bU_j^A$ is a positive linear combination of $\bU_{j-1}^n$, $\bU_{j}^n$ and $\bU_{j+1}^n$.
	\item $b_j^A$ is a convex  combination of $b_{j-1}^n$, $b_{j}^n$ and $b_{j+1}^n$ for $b\in\{u,E,\Tau\}$.
	\item if $e_j^n>0$ for all $j\in\mathbb{Z}$ then $e_j^A>0$ for all $j\in\mathbb{Z}$,
	\item The discretization \eqref{advection step} satisfies the entropy inequality
	 \begin{equation}
			\rho_j^A s^{\text{EOS}}\left(\mathcal{T}_j^A, e_j^A\right)-\rho_j^n s_j^n+\atwo\frac{\Delta t}{\Dx}\left( u_{j+1 / 2}^* \rho_{j+1 / 2}^n s_{j+1 / 2}^n- u_{j-1 / 2}^* \rho_{j-1 / 2}^n s_{j-1 / 2}^n\right) \geq 0
			\label{advection entropy}
		\end{equation}
	\end{enumerate}
\begin{proof}
The advection scheme~\eqref{advection step} can be recast into
\begin{equation}
	\bU^A_j =
	-\atwo\frac{\Dt}{\Dx}u_{j+1 / 2}^{*,-} \ \bU^n_{j+1}
	+
	\atwo\frac{\Dt}{\Dx}u_{j-1 / 2}^{*,+}  \ \bU^n_{j-1}
	+ \left[  1 - \atwo\frac{\Dt}{\Dx}(u_{j+1 / 2}^{*,+} -u_{j-1 / 2}^{*,-}) \right]  \ \bU^n_{j},
	\label{eq: positive combination advection}
\end{equation}
which proves 1. One can also write

\begin{equation}
\qty(\frac{\bU}{\rho})^A_j =
\lambda_j^{(+1)}
\qty(\frac{\bU}{\rho})^n_{j+1}
+
\lambda_j^{(0)}
\qty(\frac{\bU}{\rho})^n_{j}
+
\lambda_j^{(-1)}
\qty(\frac{\bU}{\rho})^n_{j-1}
,
\end{equation}
with
\begin{align}
\lambda_j^{(+1)}
&=
-\atwo\frac{\Dt}{\Dx}u_{j+1 / 2}^{*,-} \qty(\frac{\rho_{j+1}^n}{\rho_j^A})
,&
\lambda_j^{(0)}
&=
\left[  1 - \atwo\frac{\Dt}{\Dx}(u_{j+1 / 2}^{*,+} -u_{j-1 / 2}^{*,-}) \right]
\qty(\frac{\rho_{j}^n}{\rho_j^A})
,&
\lambda_j^{(-1)}
&=
\atwo\frac{\Dt}{\Dx}u_{j-1 / 2}^{*,+}  \qty(\frac{\rho_{j-1}^n}{\rho_j^A})
.
\end{align}
By \eqref{eq: positive combination advection} we have that
\begin{equation}
\rho^A_j =
-\atwo\frac{\Dt}{\Dx}u_{j+1 / 2}^{*,-} \ \rho^n_{j+1}
+
\atwo\frac{\Dt}{\Dx}u_{j-1 / 2}^{*,+}  \ \rho^n_{j-1}
+ \left[  1 - \atwo\frac{\Dt}{\Dx}(u_{j+1 / 2}^{*,+} -u_{j-1 / 2}^{*,-}) \right]  \ \rho^n_{j},
\label{eq: rhojA combination}
\end{equation}
so that $\lambda_j^{(+1)}+\lambda_j^{(0)}+\lambda_j^{(-1)}=1$,
which proves that $b_j^A$ is a convex  combination of $b_{j-1}^n$, $b_{j}^n$ and $b_{j+1}^n$ for $b\in\{u,E\}$. Let us now consider the case of $\Tau^A$. By \eqref{advection step}, we have that
\begin{equation}
\Tau_j^A
=
\Tau_j^n \frac{\rho_j^n}{\rho_j^A}
-\atwo\frac{\Dt}{\Dx}u_{j+1 / 2}^{*,+} \frac{1}{\rho_j^A}
-\atwo\frac{\Dt}{\Dx}u_{j+1 / 2}^{*,-} \frac{1}{\rho_j^A}
+\atwo\frac{\Dt}{\Dx}u_{j-1 / 2}^{*,+} \frac{1}{\rho_j^A}
+\atwo\frac{\Dt}{\Dx}u_{j-1 / 2}^{*,-} \frac{1}{\rho_j^A}
.
\end{equation}
However, since we chose $\rho^n_j\Tau_j^n = 1$ for all $i\in\mathbb{Z}$, We can write that
\begin{align}
\Tau_j^A
&=
\Tau_j^n \frac{\rho_j^n}{\rho_j^A}
-\atwo \frac{\Dt}{\Dx}u_{j+1 / 2}^{*,+} \frac{\rho_{j}^n}{\rho_j^A}\Tau_{j}^n
-\atwo \frac{\Dt}{\Dx}u_{j+1 / 2}^{*,-} \frac{\rho_{j+1}^n}{\rho_j^A}\Tau_{j+1}^n
+\atwo \frac{\Dt}{\Dx}u_{j-1 / 2}^{*,+} \frac{\rho_{j-1}^n}{\rho_j^A}\Tau_{j-1}^n
+\atwo \frac{\Dt}{\Dx}u_{j-1 / 2}^{*,-} \frac{\rho_{j}^n}{\rho_j^A}\Tau_{j}^n
\\
&=
\lambda_j^{(+1)}\Tau_{j+1}^n
+
\lambda_j^{(-1)}\Tau_{j-1}^n
+
\lambda_j^{(0)}
\Tau_j^n
.
\end{align}
Consequently
$\Tau_j^A$ is also a  convex  combination of $\Tau_{j-1}^n$, $\Tau_{j}^n$ and $\Tau_{j+1}^n$, which proves 2.
For statement 3, we consider the concave function $K$ introduced in the proof of
lemma~\ref{Omega is convex}, and we have that
$e^A_j = K(u_j^A,E_j^A)$. Thanks to statement 2, we can thus write that.
\begin{multline}
e^A_j =
K\qty(\sum_{k=0,\pm 1} \lambda_{j}^{(k)} u_{j+k}^n, \sum_{k=0,\pm 1} \lambda_{j}^{(k)} E_{j+k}^n)
\geq
\sum_{k=0,\pm 1} \lambda_{j}^{(k)} K\qty(u_{j+k}^n, E_{j+k}^n)
=
\sum_{k=0,\pm 1} \lambda_{j}^{(k)}
e_{j+k}^n
>0
,
\end{multline}
which proves statement~3.

Now using the lemma~\ref{Omega is convex}, we have that
\begin{align}
s(\Tau_j^A,e_j^A)
&=
\specificentropy(\Tau_j^A,u_j^A,E_j^A)
=
\specificentropy\qty(
\sum_{k=0,\pm 1}\lambda_j^{(k)}\Tau_{j+k}^n
,
\sum_{k=0,\pm 1}\lambda_j^{(k)} u_{j+k}^n
,
\sum_{k=0,\pm 1}\lambda_j^{(k)} E_{j+k}^n
)
\\
&\geq
\sum_{k=0,\pm 1}\lambda_j^{(k)}
\specificentropy\qty(
\Tau_{j+k}^n
,
u_{j+k}^n
,
E_{j+k}^n
)
=
\sum_{k=0,\pm 1}\lambda_j^{(k)}
s(
\Tau_{j+k}^n
,
e_{j+k}^n
).
\end{align}
This inequality also reads
\begin{equation}
s(\Tau_j^A,e_j^A)
\geq
s_j^n \frac{\rho_j^n}{\rho_j^A}
-\atwo \frac{\Dt}{\Dx}u_{j+1 / 2}^{*,+} \frac{\rho_{j}^n}{\rho_j^A}s_{j}^n
-\atwo \frac{\Dt}{\Dx}u_{j+1 / 2}^{*,-} \frac{\rho_{j+1}^n}{\rho_j^A}s_{j+1}^n
+\atwo \frac{\Dt}{\Dx}u_{j-1 / 2}^{*,+} \frac{\rho_{j-1}^n}{\rho_j^A}s_{j-1}^n
+\atwo \frac{\Dt}{\Dx}u_{j-1 / 2}^{*,-} \frac{\rho_{j}^n}{\rho_j^A}s_{j}^n
.
\label{entropy advection intermediate ineq}
\end{equation}
If the CFL condition~\eqref{CFL Advection} is met then $\rho_j^A\geq 0$ and by multiplying
\eqref{entropy advection intermediate ineq} by $\rho_j^A$ we get \eqref{advection entropy}.
\end{proof}

\end{proposition}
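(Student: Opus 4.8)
The plan is to exploit the fact that, up to the factor $\atwo$, the advection scheme \eqref{advection step} is a standard upwind discretization of a linear transport equation in conservation form, and that the auxiliary variable $\Tau$ can be controlled through the normalization $\rho_j^n\Tau_j^n=1$. First I would split $u^*_{j\pm 1/2}$ into its positive and negative parts and rewrite \eqref{advection step} as
\begin{equation}
\bU^A_j =
-\atwo\frac{\Dt}{\Dx}u_{j+1/2}^{*,-}\,\bU^n_{j+1}
+\atwo\frac{\Dt}{\Dx}u_{j-1/2}^{*,+}\,\bU^n_{j-1}
+\Big[1-\atwo\frac{\Dt}{\Dx}\big(u_{j+1/2}^{*,+}-u_{j-1/2}^{*,-}\big)\Big]\bU^n_{j}.
\end{equation}
Since $u^{*,+}\ge 0$ and $-u^{*,-}\ge 0$, and the CFL condition \eqref{CFL Advection} forces the bracketed coefficient to be nonnegative, all three coefficients are nonnegative; this proves statement~1 and, reading only the first component, gives $\rho_j^A>0$.

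For statement~2 I would divide the identity above by $\rho_j^A$ to express, for $b\in\{u,E\}$, $b_j^A=\sum_{k=0,\pm1}\lambda_j^{(k)} b_{j+k}^n$, where $\lambda_j^{(k)}$ is the corresponding coefficient above times the density ratio $\rho_{j+k}^n/\rho_j^A$. Because the scalar $\rho_j^A$ itself obeys exactly this update, the three weights $\lambda_j^{(k)}$ sum to one, so each $b_j^A$ is a genuine convex combination of the neighbours. The variable $\Tau$ needs one extra manipulation: its flux in \eqref{advection step} carries $u^*$ rather than $(\rho\Tau)u^*$, but inserting $\rho_j^n\Tau_j^n=1$ at each occurrence of $u^*$ rewrites $\Tau_j^A$ in the very same weighted form $\sum_{k=0,\pm1}\lambda_j^{(k)}\Tau_{j+k}^n$, which finishes statement~2.

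For statements~3 and~4 I would invoke the concavity properties established in the preceding lemma (Lemma~\ref{Omega is convex}): the map $K:(u,E)\mapsto e=E-u^2/2$ is concave (here $\Tau$ plays no part), and the specific entropy $(\Tau,u,E)\mapsto\specificentropy(\Tau,u,E)=s(\Tau,e)$ is concave on the admissible set. Applying Jensen's inequality to the convex combinations from statement~2 gives
\begin{equation}
e_j^A=K\!\Big(\sum_{k=0,\pm1}\lambda_j^{(k)}u_{j+k}^n,\ \sum_{k=0,\pm1}\lambda_j^{(k)}E_{j+k}^n\Big)\ \ge\ \sum_{k=0,\pm1}\lambda_j^{(k)}e_{j+k}^n\ >\ 0,
\end{equation}
which is statement~3, and likewise $s(\Tau_j^A,e_j^A)\ge\sum_{k=0,\pm1}\lambda_j^{(k)}s(\Tau_{j+k}^n,e_{j+k}^n)$. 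Re-expanding the $\lambda_j^{(k)}$ in terms of $u^{*,\pm}$ and $\rho^n$, multiplying through by $\rho_j^A>0$ (legitimate under \eqref{CFL Advection}), and noting that the density ratios cancel, the right-hand side collapses into the conservative upwind entropy flux $\atwo\frac{\Dt}{\Dx}\big(u^*_{j+1/2}\rho^n_{j+1/2}s^n_{j+1/2}-u^*_{j-1/2}\rho^n_{j-1/2}s^n_{j-1/2}\big)$, yielding \eqref{advection entropy}.

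The main obstacle I expect is the bookkeeping for $\Tau^A$: because the $\Tau$-equation in \eqref{advection step} has no $\rho\Tau$ factor inside the flux, the only route to a convex combination with the \emph{same} weights $\lambda_j^{(k)}$ is to reintroduce $\rho_j^n\Tau_j^n=1$ in the right spots, and one must then check carefully that the ``diagonal'' contributions --- those proportional to $u^{*,+}_{j-1/2}$ and to $u^{*,-}_{j+1/2}$ --- recombine exactly into the $\lambda_j^{(0)}$ coefficient. Tracking which of $u^{*,\pm}_{j\pm1/2}$ multiplies which neighbour is where a sign or index slip would most easily occur; the remainder is a routine combination of CFL-driven positivity of the upwind coefficients and Jensen's inequality.
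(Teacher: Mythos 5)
Your proposal is correct and follows essentially the same route as the paper's proof: the upwind rewriting with CFL-driven nonnegative coefficients, the convex weights $\lambda_j^{(k)}$ obtained by dividing by $\rho_j^A$, the reinsertion of $\rho_j^n\Tau_j^n=1$ to handle $\Tau^A$, and Jensen's inequality applied to the concave functions $K$ and $\specificentropy$ from Lemma~\ref{Omega is convex}, followed by multiplication by $\rho_j^A\geq 0$ to recover \eqref{advection entropy}. No gaps to report.
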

\subsection{Stability of the FSLP method}
\begin{proposition}
	If the following conditions are met
	\begin{enumerate}
        \item  The CFL condition \eqref{CFL unsplit} is met,
		\item the parameter $a$ is large enough so that \eqref{def a} is verified and $\Tau_L^*>0$, $\Tau_R^*>0$ in \eqref{eq: Tau_k^* def} for all $j\in\mathbb{Z}$,
		\item both density and internal energies are positive, i.e.
		$\rho_j^{n}>0$ and $e_j^{n}>0$ for all $j\in\mathbb{Z}$,
		\item the parameter $\theta$ is large enough so that \eqref{inegalité sur Co} is valid at each interface,
	\end{enumerate}
	then the flux-splitting update~\eqref{unsplit method} \begin{enumerate}
		\item[\hlo{(a)}] preserves positivity for both density and internal energy i.e. $\rho_j^{n+1}>0$ and $e_j^{n+1}>0$ for all $j\in\mathbb{Z}$,
		\item[\hlo{(b)}] is endowed with the following entropy inequality:
		\begin{equation}
			\rho^{n+1}_js(1/\rho_j^{n+1}, e_j^{n+1})- \rho^n_js(1/\rho^n_j,e^n_j)+ \frac{\Delta t}{\Dx}\left(  Q_{j+1/2}-Q_{j-1/2}\right) \geq 0,
			\label{entropy inequality bla}
		\end{equation}
		with $  Q_{j+1/2} =  u_{j+1 / 2}^{*}\rho_{j+1/2}^n s_{j+1/2}^n
		+  q_{j+1 / 2}^{n} $.
	\end{enumerate}

\begin{proof}
   (a) Let us start by ensuring that the CFL conditions  \eqref{pressure CFL}, \eqref{CFL Advection}
    are satisfied so that the advection and pressure steps are stable. By choosing $\alpha$ so 
	that $\alpha c_j = (1-\alpha)v_j =v_j+c_j$ where $v_j= 
	\left( \left(u_{j-\frac{1}{2}}^*\right)^{+}-\left(u_{j+\frac{1}{2}}^*\right)^{-}\right)$ and 
	$c_j=2\max\left[\max \left(1/\rho_{j-1}^n, 1/\rho_{j}^n\right) a_{j-1 / 2},\max \left(1/\rho_j^n, 1/\rho_{j+1}^n\right) a_{j+1 / 2}\right]$,
	 it is straightforward that \eqref{pressure CFL}, \eqref{CFL Advection} are equivalent and
	  corresponds to  \eqref{CFL unsplit}. This choice of $\alpha$ seems local as it depends on
	  the characteristic speed of each cell considered. However, it can be chosen globally as the minimizer of $f(\alpha)=\underset{j}{max}(\alpha c_j,(1-\alpha )v_j)=c_{i}+v_{i}$ where $i$ 
	  is the index of the cell with the largest speed sum of the simulation domain.
	
	Thanks to the propositions~\ref{prop:stab advection} and ~\ref{prop:stab pressure} we
	 have
	$\rho^{A}>0$, $\rho^{P}>0$, $e^{A}>0$ and $e^{P}>0$ thus, the positivity is straightforward
	 for the density as $\rho_j^{n+1} = (\rho^A_j + \rho^P_j)/2>0$. For the internal energy, 
	 we consider the function $\Lambda((\rho, \rho u, \rho E))=(\rho E) - \frac{(\rho u)^2}{2\rho}$, 
	 that is proven to be concave in \ref{Omega is convex}. We have that:
\begin{multline}
		(\rho e)_j^{n+1} = (\rho E)_j^{n+1}
	- \frac{((\rho u)_j^{n+1})^2}{2\rho_j^{n+1}}
	=
	\Lambda(\rho_j^{n+1}, (\rho u)_j^{n+1}, (\rho E)_j^{n+1})
	\\=
	\Lambda
	\qty(
	(1-\alpha )\rho_j^{A}+\alpha\rho_j^{P},
	(1-\alpha ) (\rho u)_j^{A}+\alpha(\rho u)_j^{P},
	(1-\alpha ) (\rho E)_j^{A}+\alpha(\rho E)_j^{P}
	)
	\\
	\geq
	(1-\alpha )
	\Lambda
	\qty(
	\rho_j^{A},
	(\rho u)_j^{A},
	(\rho E)_j^{A}
	)
+
\alpha
	\Lambda
	\qty(
	\rho_j^{P},
	(\rho u)_j^{P},
	(\rho E)_j^{P}
	)
	=
	(1-\alpha ) (\rho e)_j^A
		+
\alpha 	(\rho e)_j^P>0
\end{multline}
by concavity.

For (b): propositions \ref{prop:stab advection} and \ref{prop:stab pressure} ensure that both entropy inequalities \eqref{ineq pressure} and \eqref{advection entropy} are satisfied. We then use the concavity of the function $\entropy(\rho,\rho\Tau,\rho u,\rho E)=\rho s
\qty(\frac{\rho\Tau}{\rho},
\frac{(\rho E)}{\rho}
-
\frac{(\rho u)^2}{2\rho^2})$ that is proven in \ref{Omega is convex} and the fact that $(\rho \Tau)^{n+1}_j=\alpha (\rho \Tau)^P_j+(1-\alpha )(\rho \Tau)^A_j=1$. Noting $\alpha^P=\alpha$ and $\alpha^A=1-\alpha$, we have:
	\begin{multline}
		\rho^{n+1}_j s(1/\rho_j^{n+1},e_j^{n+1})
		=
		\rho^{n+1}_j s\qty(
			\frac{1}{\rho_j^{n+1}},
			\frac{(\rho E)_j^{n+1}}{\rho_j^{n+1}}
		-
		\frac{1}{2}
		\qty(
			\frac{(\rho u)_j^{n+1}}{\rho_j^{n+1}})^2
		)
		=
		\entropy(\rho_j^{n+1},1,(\rho u)_j^{n+1},(\rho E)_j^{n+1})
		\\
		=
		\entropy(\rho_j^{n+1},(\rho\Tau)_j^{n+1},(\rho u)_j^{n+1},(\rho E)_j^{n+1})
		=
		\entropy
		\qty(
		\sum_{k=A,P}\alpha^k \rho_j^{k},
		\sum_{k=A,P}\alpha^k (\rho\Tau)_j^{k},
		\sum_{k=A,P}\alpha^k (\rho u)_j^{k},
		\sum_{k=A,P}\alpha^k (\rho E)_j^{k}
		).
\end{multline}
\hlo{Thanks to appendix \ref{sect:cvx} we know that $\entropy$ is concave and thus we have:}
\begin{equation}
	\rho^{n+1}_j s(1/\rho_j^{n+1},e_j^{n+1})
	\geq
	\sum_{k=A,P}
	\alpha^k
	\entropy
	\qty(
	\rho_j^{k},
	(\rho\Tau)_j^{k},
	(\rho u)_j^{k},
	(\rho E)_j^{k}
	)	=
	\sum_{k=A,P}
	\alpha^k
	\rho_j^{k}
	s
	\qty(
	\Tau_j^{k},
	e_j^{k}
	)
	.
\end{equation}
by concavity. Using \eqref{ineq pressure} and \eqref{advection entropy}, we get:
\begin{multline}
	\rho^{n+1}_j s\qty(1/{\rho_j^{n+1}},e_j^{n+1})
\geq
\rho^{n}_j s\qty({1}/{\rho_j^{n}},e_j^{n})
-\frac{\Dt}{\Dx}(q_{j+1/2}^n-q_{j-1/2}^n)
-\frac{\Dt}{\Dx}(
	{u}^*_{j+1/2}{\rho}^n_{j+1/2}{s^n_{j+1/2}
-
{u}^*_{j-1/2}{\rho}^n_{j-1/2}s^n_{j-1/2}}
),
\end{multline}
which proves (b).
\end{proof}

\end{proposition}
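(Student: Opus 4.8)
The plan is to build the proof entirely on the convex-combination identity \eqref{average}, which expresses $\bU_j^{n+1}$ as $\alpha\bU_j^P+(1-\alpha)\bU_j^A$ for an arbitrary $\alpha\in{]0,1[}$, and to combine it with the stability statements already established for the two sub-steps (Propositions~\ref{prop:stab pressure} and \ref{prop:stab advection}) and with the concavity of the internal-energy and entropy maps proven in lemma~\ref{Omega is convex}. Since the update \eqref{average} does not depend on $\alpha$, I am free to pick the value of $\alpha$ that makes the two sub-step CFL conditions as loose as possible; the whole argument then reduces to transporting the positivity and entropy properties of each sub-step through a convex combination.

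First I would fix $\alpha$. Following the $N=2$ case of the flux-splitting argument of section~\ref{recast}, I would choose $\alpha$ so that the acoustic speed $c_j$ weighting the pressure CFL \eqref{pressure CFL} and the transport speed $v_j=(u_{j-1/2}^*)^+-(u_{j+1/2}^*)^-$ weighting the advection CFL \eqref{CFL Advection} are balanced, i.e. $\alpha c_j=(1-\alpha)v_j$; with this choice both \eqref{pressure CFL} and \eqref{CFL Advection} become equivalent and coincide with the assumed condition \eqref{CFL unsplit} (this is exactly the form \eqref{CFL general flux}). I would also note that this $\alpha$, though written cell by cell, may be replaced by the single global value minimizing $\max_j\max(\alpha c_j,(1-\alpha)v_j)$, attained at the cell carrying the largest speed sum, without changing $\bU_j^{n+1}$. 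Once $\alpha$ is fixed in $]0,1[$ and the CFL hypotheses are in force, Propositions~\ref{prop:stab pressure} and \ref{prop:stab advection} give $\rho_j^P>0$, $\rho_j^A>0$, $e_j^P>0$, $e_j^A>0$, together with the sub-step entropy inequalities \eqref{ineq pressure} and \eqref{advection entropy}.

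For item (a): positivity of $\rho_j^{n+1}=\alpha\rho_j^P+(1-\alpha)\rho_j^A$ is immediate. For the internal energy I would use the concave map $\Lambda(\rho,\mom,\Etot)=\Etot-(\mom)^2/(2\rho)$ from lemma~\ref{Omega is convex}: evaluating it on the convex combination $\bU_j^{n+1}=\alpha\bU_j^P+(1-\alpha)\bU_j^A$ gives $(\rho e)_j^{n+1}=\Lambda(\bU_j^{n+1})\ge\alpha\Lambda(\bU_j^P)+(1-\alpha)\Lambda(\bU_j^A)=\alpha(\rho e)_j^P+(1-\alpha)(\rho e)_j^A>0$, and dividing by $\rho_j^{n+1}>0$ yields $e_j^{n+1}>0$.

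For item (b): I would enlarge the state vector with the auxiliary variable $\rho\Tau$, recall that the solver enforces $(\rho\Tau)_j^n=1$, and observe that by linearity of \eqref{average} one also has $(\rho\Tau)_j^{n+1}=\alpha(\rho\Tau)_j^P+(1-\alpha)(\rho\Tau)_j^A=1$. Writing the entropy density as $\entropy(\rho,\rho\Tau,\mom,\Etot)=\rho s(\rho\Tau/\rho,\Etot/\rho-(\mom)^2/(2\rho^2))$, which is concave by lemma~\ref{Omega is convex}, I would evaluate it on the convex combination to get $\rho_j^{n+1}s(1/\rho_j^{n+1},e_j^{n+1})\ge\alpha\,\rho_j^Ps(\Tau_j^P,e_j^P)+(1-\alpha)\,\rho_j^As(\Tau_j^A,e_j^A)$, then insert the two sub-step entropy inequalities \eqref{ineq pressure} and \eqref{advection entropy}; because those inequalities carry the $1/\alpha$ and $1/(1-\alpha)$ normalizations present in \eqref{pressure step}--\eqref{advection step}, the weights cancel and the two entropy fluxes add to $Q_{j+1/2}=u_{j+1/2}^*\rho_{j+1/2}^ns_{j+1/2}^n+q_{j+1/2}^n$, which is consistent because $q_\Delta$ and the upwind advection flux are. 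The step I expect to be the true crux is the bookkeeping around $\alpha$: one must verify that a single admissible $\alpha\in{]0,1[}$ simultaneously validates \eqref{pressure CFL} and \eqref{CFL Advection} under \eqref{CFL unsplit} — this is precisely where the characteristic speeds must be summed rather than handled separately — and that the convex combination of the two weighted entropy fluxes collapses exactly to the claimed $Q_{j+1/2}$; the concavity inputs themselves are packaged in lemma~\ref{Omega is convex}, so the analytic content is concentrated in that interplay.
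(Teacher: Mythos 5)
Your proposal is correct and follows essentially the same route as the paper's own proof: balance $\alpha$ so that the pressure CFL \eqref{pressure CFL} and advection CFL \eqref{CFL Advection} coincide with \eqref{CFL unsplit}, invoke Propositions~\ref{prop:stab pressure} and \ref{prop:stab advection} for the sub-steps, and transport positivity and the entropy inequality through the convex combination \eqref{average} via the concavity of $\Lambda$ and $\entropy$ from lemma~\ref{Omega is convex}, with the $1/\alpha$ and $1/(1-\alpha)$ normalizations cancelling to yield $Q_{j+1/2}$. Your statement of the balancing condition $\alpha c_j=(1-\alpha)v_j$ is in fact a cleaner rendering of what the paper writes, so no gap remains.
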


\section{Low Mach behavior, extension to multi-dimensional and higher order of accuracy}
In this section, we briefly address the behavior of the scheme in the low Mach regime and propose simple means to extend the FSLP method to multi-dimensional problems and improve its accuracy with  higher-order techniques.
\subsection{Low Mach behavior}\label{section: low mach}
Many simulation cases involve flows in which the material velocity is relatively low compared to the sound velocity.
A common way to characterize this situation is to consider the numbers
\(L\), \(t_0\), \(\rho_0\), \(u_0\), \(p_0\), \(u_0=p_0\rho_0\), \(c_0=\sqrt{p_0/\rho_0}\) and
\((\dx\phi)_0\)
that are the characteristic magnitudes for length, time, density, velocity, pressure, sound velocity, and $\dx\phi$, respectively.
We then introduce the following non-dimensional variables:
\(\tilde{x} = {x}/{L}\),
\(\tilde{t} = {t}/{t_0}\),
\(\tilde{\rho} = {\rho}/{\rho_0}\),
\(\tilde{u} = {u}/{u_0}\),
\(\tilde{e} = {e}/{e_0}\),
\(\tilde{p} = {p}/{p_0}\),
\(\widetilde{(\dx\phi)} = {\dx\phi}/{(\dx\phi)_0}\),
and we define the Mach number $\mach$ and the Froude number $\froude$ by $\mach=u_0/c_0$ and $\froude=u_0/\sqrt{L(\dx\phi)_0}$.
Following \cite{Bispen2017,thomann2020all}, we consider a particular flow regime such that $\mach=\froude$ so that the system~\eqref{Euler's eqn} takes the following non-dimensional form
\begin{subequations}
	\begin{align}
		\partial_{\tilde{t}}{\tilde{\rho}}
		+
		\partial_{\tilde{x}}({\tilde{\rho}\tilde{u}})
		&=0
		,
		&
		\partial_{\tilde{t}}({\tilde{\rho}\tilde{u}})
		+
		\partial_{\tilde{x}}({\tilde{\rho}\tilde{u}^2})
		+
		\frac{1}{\mach^2}
		\qty(\partial_{\tilde{x}}\tilde{p} + \tilde{\rho}\widetilde{(\dx\phi)})
		&=
		0
		,
		&
		\partial_{\tilde{t}}({\tilde{\rho}\tilde{E}})
		+
		\partial_{\tilde{x}}(
		{\tilde{\rho}\tilde{E}\tilde{u}}
		+
		{\tilde{p}\tilde{u}}
		)
		&=
		-
		\tilde{\rho}\tilde{u}\widetilde{(\dx\phi)}
		.
	\end{align}
	\label{eq: non dimensional eq}
\end{subequations}
Thanks to system~\eqref{eq: non dimensional eq}, one can see that in the limit $\mach\to 0$, a singularity may appear in the momentum equation.
Supposing now that $\mach \ll 1$, this suggests to distinguish two cases
similarly as in \cite{chalons_all-regime_2016}: in the first case the term
\(\partial_{\tilde{x}}\tilde{p} + \tilde{\rho}\widetilde{(\dx\phi)}\)
will always remain of magnitude $O(\mach^2)$, so that $\tilde{\rho}$, $\tilde{u}$ and
$\tilde{E}$ will also remain of order $O(\mach^0)$. In this case, we will say that the system is in the low Mach regime. In the second case, the term
\(\partial_{\tilde{x}}\tilde{p} + \tilde{\rho}\widetilde{(\dx\phi)}\)
will not remain of magnitude $O(\mach^2)$ in such way that $\tilde{\rho}\tilde{u}$,
 may experience large variations from $O(\mach^2)$ to $O(\mach^0)$, yielding
 significant growth of $\mach$ and thus a change in the Mach regime. These variations characterize  all-regime flows with respect to the Mach number. Let us remark that
 the finer definition of well-prepared initial conditions used in \cite{Bispen2017} verifies the looser notion of low Mach regime considered in this work.

As it was mentioned earlier, the behavior of the Euler equations in the low Mach regime
and adapted simulation strategies raise issues that have been intensively investigated for many years and are still very actively studied (see \cite{Turkel1987,Guillard1999,Paillere2000,
Guillard2004,Beccantini2008,Rieper2009,Dauvergne2008,Dellacherie2010,Degond2011, Cordier2012,
Dellacherie2010a, chalons_all-regime_2016, Dellacherie2016, Zakerzadeh2016,
Barsukow2021,Zakerzadeh2016,Bispen2017, Berthon2020,Dimarco2017,Boscarino2018,Bouchut2020a,
Dimarco2018,Bruel2019,Boscheri2020,Bouchut2020,Zeifang2020}
and the references therein). In this work, we propose transposing the low-Mach error analysis of the OSLP method presented in \cite{chalons_all-regime_2016} to the FSLP scheme.
This task is straightforward, although it requires lengthy and tedious calculations. Therefore, for the sake of brevity, we only recall the main points of this approach.
We consider a non-dimensional expression of the FSLP solver for a one-dimensional problem and evaluate the truncation error obtained with a smooth solution of \eqref{eq: non dimensional eq} that satisfies the low Mach regime hypothesis
\(\partial_{\tilde{x}}\tilde{p} + \tilde{\rho}\widetilde{(\dx\phi)} = O(\mach^2)\).
Similarly to the OSLP scheme, the magnitudes of the resulting truncation error estimates
are uniform with respect to $\mach$ except for the momentum equation that features
an error term of order $O(\theta\Delta x/\mach)$. Consequently
, choosing $\theta=O(\mach)$ when $\mach\ll 1$ will help the scheme preserve a uniform truncation error with respect to $\mach$. A well-known consequence of this choice is that in regions where $\mach\ll 1$, the non-centered part of the pressure term $\Pi^{*,\theta}_{j+1/2}$ will be moderated.

The numerical tests proposed in sections~\ref{section: gresho}, \ref{section: RT} and \ref{section: vortex with gravity} show that this simple correction work similarly for both FSLP and OSLP methods: in the low Mach regime, both schemes provide accurate results. Nevertheless, we need to emphasize that the modification of the scheme induced by $\theta$ is not flawless and should be considered with care.
Spurious oscillations may occur \cite{Dellacherie2009, Jung2022} and the inequality \eqref{inegalité sur Co}
that ensures the entropy property of the scheme may not be verified in the limit $\mach\to 0$.

Let us finally highlight that as in \cite{chalons_all-regime_2016,padioleau2019high} the present approach is rather pragmatic and does not provide reliable analysis and explanation
for the low Mach issues. Indeed, we do not study the delicate question of the
asymptotic regime $\mach\to 0$ \cite{Degond2011, Cordier2012,Zakerzadeh2016,Bispen2017,Berthon2020,Dimarco2017,Boscarino2018,Bouchut2020a}, we neither address the strong time step limitation due to the CFL conditions~\eqref{pressure CFL} when  $\mach \ll 1$ that can be circumvented by using Implicit-Explicit strategies\cite{chalons_all-regime_2016,Dimarco2018,Boscheri2020,Bouchut2020,Zeifang2020}. 
\hlb{It seems possible to adapt the OSLP Implicit-Explicit strategy of \cite{chalons_all-regime_2016} to the FSLP method. However such task falls beyond the scope of the present and will be investigated in future works.}
Moreover, the present lines are derived within a one-dimensional setting that does not allow fully expressing issues related to low Mach flows.

\subsection{Extension to higher order}\label{MUSCL}

The FSLP algorithm can be implemented thanks to a simple single-step evaluation
of numerical fluxes. This enables the use of classical high-order enhancements
that are available in the literature for finite volume methods such as MUSCL-Hancock \cite{Leer1977,Leer1977a,Leer1979,toro, Godlewski2021, LeVeque2002}, (W)ENO \cite{Liu1994,Jiang1996} or MOOD \cite{diot2013multidimensional, clain2011high}. For the sake of simplicity, in this paper, we will only show numerical results with the MUSCL method for which the positivity can be proven under a half CFL condition. Let us consider a linear reconstruction of the primitive variables $\bm{V}=(\rho, u, p)$ in each cells

\begin{equation*}
	\tilde{\bm{V}_j^n}(x) = \bm{V}_j^n +(x-x_j)\bm{p^n_j}
\end{equation*}

where the slopes $\bm{p}_j^n = \bm{p}^n (\bm{V}_{j-1}^n, \bm{V}_j^n, \bm{V}_{j+1}^n)$ are obtained using a standard slope limiter such as the minmod function \cite{yee1989class}. Let us introduce the function $H:\ \bm{V}\mapsto \bm{U}$ that converts a state's conservative representation into its corresponding set of primitive variables. The reconstruction provides a second-order evaluation of the conserved quantities at each interface with
\begin{align}
	\bU_{j+1/2,-}^{n,HO}
	&=
	H(\tilde{\bm{V}_{j}}^n(x_{j+1/2})),&
	\bU_{j-1/2,+}^{n,HO}
	&=
	H(\tilde{\bm{V}_{j-1}}^n(x_{j-1/2}))
	,
\end{align}
that we use to evaluate the FSLP flux function \eqref{RS} at each interface by setting:
\begin{equation}
	\bU_j^{n+1} - \bU_j^{n}
	+
	\frac{\Delta t}{\Dx}
	\qty(
	\mathbf{F}^{\text{FSLP}}(\bU_{j+1/2,-}^{n,HO},\bU_{j+1/2,+}^{n,HO})
	-
\mathbf{F}^{\text{FSLP}}(\bU_{j-1/2,-}^{n,HO},\bU_{j-1/2,+}^{n,HO})
	)
	=
	\Delta t
	\sourceterm_j(\bU_{j+1/2,-}^{n,HO},\bU_{j+1/2,+}^{n,HO},\bU_{j-1/2,-}^{n,HO},\bU_{j-1/2,+}^{n,HO}).
	\label{eq:O2}
	\end{equation}
	The gravity source term can also be computed with the same formula as in the first-order method by replacing cell-averaged values with the high-precision face-centered values:
	\begin{equation*}
		\sourceterm_j(\bU_{j+1/2,-}^{n,HO},\bU_{j+1/2,+}^{n,HO},\bU_{j-1/2,-}^{n,HO},\bU_{j-1/2,+}^{n,HO})=\begin{pmatrix}
		0\\ \{\rho \dx\phi\}_j^{n,H0} \\ \{\rho u \dx\phi\}_j^{n,HO}
		\end{pmatrix}
	\end{equation*}
	\begin{equation}
	\left\{\begin{array}{l}
		\{\rho \dx\phi\}_j^{n,H0}= \frac{(\rho \dx\phi)^{H0}_{j+1/2}+(\rho  \dx\phi)^{H0}_{j-1/2}}{2}\\
			\{\rho u \dx\phi\}_j^{n,H0}= \frac{u^{*,H0}_{j+1/2}(\rho  \dx\phi)^{H0}_{j+1/2}+u^{*,H0}_{j-1/2}( \rho\dx\phi)^{H0}_{j-1/2}}{2}\\
		(\rho \dx\phi)_{j+1/2}^{HO} = \frac{\rho_{j+1/2,-}^{n,H0}+\rho_{j+1/2,-}^{n,H0}}{2}(\dx \phi)_{j+1/2}^{HO}
		\end{array}\right.
	\end{equation}
	where $(\dx \phi)_{j+1/2}^{HO}$ is a second-order accurate evaluation of the derivative of the 
	gravitational potential at the interface $x_{j+1/2}$. Note that if the potential is known 
	explicitly, it can be computed exactly at the interface's coordinates \hlb{$(\dx \phi)_{j+1/2}^{HO}=\dx\phi(x_{j+\half})$. In the numerical results presented in section \ref{numerical exp}, we restrict ourselves to a simple linear gravitational potential field $\dx \phi=0, \dy \phi=g$.}
	The extension of the well-balanced property is not straightforward and beyond the scope of this paper.  \hlo{The difficulty lies in predicting the exact amount of diffusion required to be added/removed to precisely cancel out the pressure gradients, as the high-order reconstruction processes are non-linear.} Second-order well-balanced methods can be found in \hlo{\cite{thomann2020all, chalons2022exploring, caballero2023implicit, Luna2020,Grosso2021}}.
	The second-order extension \eqref{eq:O2} of the FSLP scheme is positive for density and internal energy as long as it is ensured that:
	\begin{equation}
	\frac{\Delta t}{\Delta x}  \max _{j \in \mathbb{Z}}\left(2\max \left(1/\rho_j^n, 1/\rho_{j+1}^n\right) a_{j+1 / 2} + \left(u_{j-\frac{1}{2}}^*\right)^{+}-\left(u_{j+\frac{1}{2}}^*\right)^{-} \right)< \frac{1}{2}
		\label{CFL unsplit 2}
	\end{equation}
The stability of the second-order method under the conditions above is a direct consequence of the stability of the first-order method.
For the second-order extension in time, one can use either the
 SSP-RK2 method \cite{spiteri2002new, gottlieb1998total}
  or a classical Hancock update \cite{toro}. 
\hlo{The latter option is tested numerically in section 
  \ref{sec:isentropic_vortex} where we check
   the 2nd order of accuracy of the FSLP-MUSCL-Hancock method on the isentropic vortex test case \cite{shu1998essentially}.}
\subsection{Multidimensional extension}
Before going any further, let us introduce the notations for our 2D space  discretization:
we consider two strictly increasing sequences $(x_{i+1/2})_{i\in\mathbb{Z}}$ and $(y_{j+1/2})_{j\in\mathbb{Z}}$ and divide the
real plane into cells where the ${ij}^\text{th}$ cell is the interval
$\left(x_{i-1 / 2}, x_{i+1 / 2}\right)\times\left(y_{j-1 / 2}, x_{j+1 / 2}\right)$. The space steps of the ${ij}^\text{th}$ cell are
$\Dx = x_{i+1/2} - x_{i-1/2}>0$ and $\Delta y_j = y_{j+1/2} - y_{j-1/2}>0$. We consider a discrete initial data $\bU_{ij}^{0}$ defined by $\bU_{ij}^{0}=\frac{1}{\Dx_i \Delta y_j} \int_{x_{i-1 / 2}}^{x_{i+1 / 2}} \int_{y_{j-1 / 2}}^{y_{j+1 / 2}}\bU^{0}(x,y) \mathrm{d} x\mathrm{d} y$, for $(i,j) \in \mathbb{Z}^2$.
Let us introduce the Euler equations of gas dynamicss in two dimensions of space:
\begin{equation}
		\dt \bU + \dx \bF ( \bU ) + \dy \bG ( \bU )  = \sourceterm (\bU,\phi)
		,\qquad\text{for $(x,y)\in\RealSet^2$, $t>0$,}
		\label{Euler's eqn 2D}
\end{equation}

with
\(\bU=(\rho, \rho u, \rho v, \rho E)^T\),
\(\bF ( \bU ) = (\mom,\ u\mom+ p,u\rho w,\ u\rho E + p u)^T\),
\(\bG ( \bU ) = (\rho v,\ v\mom,v\rho w+p,\ v\rho E + p v)^T\),
and
\(\sourceterm (\bU,\phi ) = -\rho \dx \phi (0,1,0,u)^T -\rho \dy \phi (0,0,1,v)^T\) where $v$ is the velocity in the $y$ direction and $\phi$ is smooth enough so that we can consider that $\partial_x \phi$, $\partial_y \phi$ are also regular and bounded. We take advantage of the rotational invariance of the 2D Euler system and discretize the fluxes direction by direction:

\begin{equation}
\left\{\begin{aligned}
\rho_{i,j}^{n+1} &=\rho_{i,j}^n\hlo{-}\frac{\Delta t}{\Delta x}\left(u_{i+1 / 2,j}^* \rho_{i+1 / 2,j}^{n}-u_{i-1 / 2,j}^* \rho_{i-1 / 2,j}^{n}\right) +\frac{\Delta t}{\Delta y}\left(v_{i,j+1 / 2}^* \rho_{i,j+1 / 2}^{n}-v_{i-1 / 2,j}^* \rho_{i-1 / 2,j}^{n}\right),   \\
(\rho u)_{i,j}^{n+1} &=(\rho u)_{i,j}^n\hlo{-}\frac{\Delta t}{\Delta x}\left(u_{i+1 / 2,j}^*(\rho u)_{i+1 / 2,j}^{n}+\Pi_{i+1 / 2,j}^{\theta^x,*}-u_{i-1 / 2,j}^*(\rho u)_{i-1 / 2,j}^{n}-\Pi_{i-1 / 2,j}^{\theta^x,*}\right)\\
&+\frac{\Delta t}{\Delta y}\left(v_{i,j+1 / 2}^*(\rho u)_{i,j+1 / 2}^{n}-v_{i,j-1 / 2}^*(\rho u)_{i,j-1 / 2}^{n}\right)-\{\rho \dx \phi\}^n_{ij}, \\
(\rho v)_{i,j}^{n+1} &=(\rho v)_{i,j}^n\hlo{-}\frac{\Delta t}{\Delta x}\left(u_{i+1 / 2,j}^*(\rho v)_{i+1 / 2,j}^{n}-u_{i-1 / 2,j}^*(\rho v)_{i-1 / 2,j}^{n}\right)\\
&+\frac{\Delta t}{\Delta y}\left(v_{i,j+1 / 2}^*(\rho v)_{i,j+1 / 2}^{n}+\Pi_{i,j+1 / 2}^{\theta^y,*}-v_{i,j-1 / 2}^*(\rho v)_{i,j-1 / 2}^{n}-\Pi_{i,j-1 / 2}^{\theta^y,*}\right)-\{\rho \dy \phi\}^n_{ij}, \\
(\rho E)_{i,j}^{n+1} &=(\rho E)_{i,j}^n\hlo{-}\frac{\Delta t}{\Delta x}\left(u_{i+1 / 2,j}^*(\rho E)_{i+1 / 2,j}^{n}+\Pi_{i+1 / 2,j}^{\theta^x,*} u_{i+1 / 2,j}^*-u_{i-1 / 2,j}^*(\rho E)_{i-1 / 2,j}^{n}-\Pi_{i-1 / 2,j}^{\theta^x,*} u_{i-1 / 2,j}^*\right) \\
&+\frac{\Delta t}{\Delta y}\left(v_{i,j+1 / 2}^*(\rho E)_{i,j+1 / 2}^{n}+\Pi_{i,j+1 / 2}^{\theta^y,*} v_{i,j+1 / 2}^*-v_{i,j-1 / 2}^*(\rho E)_{i,j-1 / 2}^{n}-\Pi_{i,j-1 / 2}^{\theta^y,*} v_{i,j-1 / 2}^*\right)-\{(\rho u\dx+\rho v\dy) \phi\}^n_{ij}.
\end{aligned}\right.
\label{unsplit method 2D}
\end{equation}
with
\begin{equation}
\left\{\begin{array}{l}
u^*_{i+1/2,j}=\frac{u^n_{i+1,j}+u^n_{i,j}}{2}-\frac{1}{2 a_{i+1/2,j}}\left(p^n_{i+1,j}-p^n_{i,j} + \frac{\rho^n_{i+1,j} + \rho^n_{i,j}}{2} (\phi_{i+1,j} - \phi_{i,j})\right), \\
v^*_{i+1/2,j}=\frac{v^n_{i,j+1}+v^n_{i,j}}{2}-\frac{1}{2 a_{i,j+1/2}}\left(p^n_{i,j+1}-p^n_{i,j} + \frac{\rho^n_{i,j+1} + \rho^n_{i,j}}{2} (\phi_{i,j+1} - \phi_{i,j})\right), \\
\Pi^{*,\theta^x}_{i+1/2,j}=\frac{p^n_{i+1,j}+p^n_{i,j}}{2}-\theta_{i+1/2,j}^x\frac{a_{i+1/2,j}}{2}\left(u^n_{i+1,j}-u^n_{i,j}\right),\\
\Pi^{*,\theta^y}_{i,j+1/2}=\frac{p^n_{i,j+1}+p^n_{i,j}}{2}-\theta_{i,j+1/2}^y\frac{a_{i,j+1/2,}}{2}\left(u^n_{i,j+1}-u^n_{i,j}\right),
\end{array}\right.
\label{u*p*2D}
\end{equation}

as well as the source terms discretization:
\begin{equation}
\left\{\begin{array}{l}
	\{\rho \dx\phi\}_{i,j}^n= \frac{(\rho \dx\phi)_{i+1/2,j}+(\rho \dx\phi)_{i-1/2,j}}{2},\\
	\{\rho \dy\phi\}_{i,j}^n= \frac{(\rho \dy\phi)_{i,j+1/2}+(\rho \dy\phi)_{i,j-1/2}}{2},\\
		\{\rho u \dx\phi\}_{i,j}^n= \frac{u^*_{i+1/2,j}(\rho \dx\phi)_{i+1/2,j}+u^*_{i-1/2,j}(\rho \dx\phi)_{i-1/2,j}}{2},\\
			\{\rho u \dy\phi\}_{i,j}^n= \frac{v^*_{i,j+1/2}(\rho \dy\phi)_{i,j+1/2}+v^*_{i,j-1/2}(\rho \dy\phi)_{i,j-1/2}}{2},\\
	(\rho \dx\phi)_{i+1/2,j} = \frac{\rho^n_{j+1}+\rho^n_{i,j}}{2}\frac{\phi_{i+i,j}-\phi_{i,j}}{\Dx},\\
(\rho \dy\phi)_{i,j+1/2} = \frac{\rho^n_{i,j+1}+\rho^n_{i,j}}{2}\frac{\phi_{i,j+i}-\phi_{i,j}}{\Delta y}.
	\end{array}\right.
\end{equation}

\section{Flux-splitting as a relaxation approximation}
The goal of this section is to highlight the connection between the FSLP flux-splitting approach and a relaxation approximation. In the previous sections, we concluded that the FSLP approach could be expressed as an averaging procedure \eqref{average} where
 \(\bU_j^P\) and \(\bU_j^A\) are defined as approximate solutions of two systems \eqref{pressure system} and \eqref{advection system} that respectively only account for the pressure and the advection effects. 
We propose to translate that three-step process thanks to a relaxation approximation. Suppose that $\alpha\in(0,1)$ is a constant and let $\relaxparam$ be a positive parameter, we consider the system
\DeclareRobustCommand\subscriptrelax{\relaxparam}
\begin{subequations}
	\renewcommand{\theequation}{\arabic{parentequation}\alph{equation}\textsubscript{\(\subscriptrelax\)}}
	\begin{align}
		\dt
		\begin{bmatrix}
			\rho^P
			\\
			\rho^P u^P
			\\
			\rho^P E^P
			\\
			\rho^P\Pi^P
			\\
			\rho^P\Tau^P
			\\
			\phi
		\end{bmatrix}
		&&
		+ 
		\frac{1}{\alpha}
		\dx
		\begin{bmatrix}
			0
			\\
			\Pi^P
			\\
			 \Pi^P u^P
			\\
			 a^2 u^P
			\\
			- u^P
			\\
			0
		\end{bmatrix}
		+
		\frac{1}{\alpha}
		\begin{bmatrix}
			0
			\\
			\rho^P
			\\
			\rho^Pu^P
			\\
			0
			\\
			0
			\\
			0
		\end{bmatrix}
	\dx \phi
		&=
		 {\relaxparam}
		\begin{bmatrix}
			\alpha\rho^P+(1-\alpha)\rho^A - \rho^P
			\\
			\alpha\rho^P u^P+(1-\alpha)\rho^A u^A - \rho^Pu^P
			\\
			\alpha\rho^P E^P+(1-\alpha)\rho^A E^A - \rho^P E^P
			\\
			p^\EOS(1/\rho^P,e^P) - \Pi^P
			\\
			1 - \rho^P\Tau^P
			\\
			0
		\end{bmatrix},
		\label{eq: relaxed system param pressure}
		\\
		\dt
		\begin{bmatrix}
			\rho^A
			\\
			\rho^A u^A
			\\
			\rho^A E^A
			\\
			\rho^A\Pi^A
			\\
			\rho^A\Tau^A
		\end{bmatrix}
		&&
		+
		\qty(\frac{1}{1-\alpha})
		\dx
		\begin{bmatrix}
			 \rho^A u^P
			\\
			 \rho^A u^A u^P
			\\
			 \rho^A E^A u^P
			\\
			 \rho^A\Pi^A u^P
			\\
			 u^P
		\end{bmatrix}
		\phantom{	-
			\begin{bmatrix}
				2\sourceterm(\bU^P,\phi)
				\\
				0
			\end{bmatrix}
		}
		&= {\relaxparam}
		\begin{bmatrix}
			\alpha\rho^P+(1-\alpha)\rho^A - \rho^A
			\\
			\alpha\rho^P u^P+(1-\alpha)\rho^A u^A - \rho^Au^A
			\\
			\alpha\rho^P E^P+(1-\alpha)\rho^A E^A - \rho^A E^A
			\\
			p^\EOS(1/\rho^A,e^A) - \Pi^A
			\\
			1 - \rho^A\Tau^A
		\end{bmatrix}.
		\label{eq: relaxed system param advection}
	\end{align}
	\label{eq: relaxed system param}
\end{subequations}
%
The system~(\ref{eq: relaxed system param}\textsubscript{\(\relaxparam\)}) features
a pair of duplicate conservative variables $(\bU^P,\bU^A)$
and 4 other variables: $\Pi^P$, $\Pi^A$, $\Tau^A$ and $\Tau^P$.
The variables $\Pi^P$ and $\Pi^A$ are surrogate for the thermodynamical pressure, while
$\Tau^A$ and $\Tau^P$ play the role of a pseudo-specific volume.
It is possible to view (\ref{eq: relaxed system param}\textsubscript{\(\relaxparam\)})  as
a Suliciu relaxation approximation with a separation of the acoustic and transport operators. Indeed,  (\ref{eq: relaxed system param}\textsubscript{\(\relaxparam\)}) implies that
\begin{subequations}
	\begin{align}
		\dt
		\begin{bmatrix}
			\alpha\rho^P + (1-\alpha)\rho^A
			\\
			\alpha\rho^P u^P + (1-\alpha)\rho^A u^A
			\\
			\alpha\rho^P E^P+ (1-\alpha)\rho^A E^A
		\end{bmatrix}
		+
		\dx
		\begin{bmatrix}
		\rho^A u^P
		\\
		\rho^A u^A u^P + \Pi^P
		\\
		\rho^A E^A u^P + \Pi^P u^P
	\end{bmatrix}
		&=
		\begin{bmatrix}
			0
			\\
			-\rho^P
			\\
			-\rho^P u^P
		\end{bmatrix}
	\dx \phi
		\label{eq: relaxed rhoU^P + rhoU^A eq}
		\\
		\dt
		\qty[\alpha\rho^P\Pi^P
		\!+\!
		(1-\alpha)\rho^A\Pi^A]
		+
		\dx
		\qty
		(
		\rho^A\Pi^Au^P
		\!+\!
		a^2 u^P
		)
		&=
		\relaxparam
		\qty
		[
		\alpha p^\EOS\qty(\frac{1}{\rho^P},e^P\!) 
		\!+\!
		(1\!-\!\alpha) p^\EOS\qty(\frac{1}{\rho^A},e^A\!) 
		\!-\!
		\alpha \Pi^P \! - \! (1\!-\!\alpha)\Pi^A
		\!
		]
		,
		\label{eq: relaxed rhoPi^P + rhoPi^A eq}
		\\
		\dt
		\qty[ \alpha\rho^P\Tau^P + (1-\alpha)\rho^A\Tau^A]
		&=
		\relaxparam
		\qty
		[
		1-\alpha\rho^P\Tau^P-(1-\alpha)\rho^A\Tau^A
		]
		\label{eq: relaxed rhoTau^P + rhoTau^A eq}
		.
	\end{align}
\end{subequations}%

Taking the limit $\relaxparam\to+\infty$ formally enforces that
$\bU^P=\bU^A=\bU=(\rho,\rho u,\rho E)^T$ and $\Pi^A = \Pi^P = p^\EOS(1/\rho,e)$, so that \eqref{eq: relaxed rhoU^P + rhoU^A eq}
enables to retrieve the Euler system~\eqref{Euler's eqn}.
This suggests that we  can use the relaxation system~(\ref{eq: relaxed system param}\textsubscript{\(\relaxparam\)}) as an approximation of \eqref{Euler's eqn} in the limit $\relaxparam\to+\infty$.
The equation~\eqref{eq: relaxed rhoPi^P + rhoPi^A eq} plays here
a similar role as the surrogate pressure equation in the classic Suliciu approximation~\cite{Suliciu1998,chalons2008,coquel2012}.
The sole purpose of equation~\eqref{eq: relaxed rhoTau^P + rhoTau^A eq}
is to ensure that $\alpha\rho^P\Tau^P+(1-\alpha)\rho^A\Tau^A=1$ in the regime
$\relaxparam\to\infty$.
In our discretization strategy, we classically mimic the $\relaxparam\rightarrow \infty$ regime for $t\in[t^{n},t^{n+1})$,
by enforcing
$(\bU^P,\Pi^P,\Tau^A,\bU^A,\Pi^A,\Tau^A)(t=t^n) = (\bU,p^{\text{EOS}}(1/\rho,e),1/\rho,\bU,p^{\text{EOS}}(1/\rho,e),1/\rho)(t=t^n)$
 and by solving the relaxation off-equilibrium
 system~(\ref{eq: relaxed system param}\textsubscript{\(\relaxparam\!=\!0\)}).
The properties of the off-equilibrium system~(\ref{eq: relaxed system param}\textsubscript{\(\relaxparam\!=\!0\)}) are briefly summarized in the following proposition whose proof is given in \ref{section: eigenstructure analysis}.
\begin{proposition} \leavevmode
The system~(\ref{eq: relaxed system param}\textsubscript{\(\relaxparam\!=\!0\)}) is hyperbolic with a set of characteristic velocities given by:
	$\dfrac{u^P}{1-\alpha}$ (with an algebraic multiplicity 4), $0$ (with an algebraic multiplicity 5) and 
	$\pm \dfrac{a}{\alpha\rho^P}$. 
	Moreover, (\ref{eq: relaxed system param}\textsubscript{\(\relaxparam\!=\!0\)}) only involves linearly degenerate fields.
\end{proposition}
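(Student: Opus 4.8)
The plan is to bring the off-equilibrium system~(\ref{eq: relaxed system param}\textsubscript{\(\relaxparam\!=\!0\)}) into quasilinear form $\dt\mathbf{W}+\mathcal{B}(\mathbf{W})\dx\mathbf{W}=0$ on a convenient chart of the $11$ unknowns --- e.g. $(\rho^P,u^P,\Pi^P,E^P,\Tau^P,\phi)$ for the pressure part and $(\rho^A,u^A,E^A,\Pi^A,\rho^A\Tau^A)$ for the advection part --- and read off its spectrum block by block. The decisive structural remark is that, once $\relaxparam=0$, the pressure subsystem is \emph{autonomous}: its flux and its $\dx\phi$ contribution involve only $\rho^P$, $u^P$, $\Pi^P$ and $\phi$. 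Hence $\mathcal{B}$ is block lower-triangular, $\mathcal{B}=\left(\begin{smallmatrix}\mathcal{B}_P&0\\ \mathcal{C}&\mathcal{B}_A\end{smallmatrix}\right)$ with $\mathcal{B}_P$ of size $6$ and $\mathcal{B}_A$ of size $5$, and --- crucially --- the coupling block $\mathcal{C}$ is supported on the single column of $u^P$, because the advection equations see the pressure unknowns only through $\dx u^P$.

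I would then treat the two diagonal blocks separately. Eliminating $\dt\rho^A$ from the first advection equation turns the others into pure transport at speed $u^P/(1-\alpha)$ for $u^A,E^A,\Pi^A$ and leaves $\dt(\rho^A\Tau^A)+\tfrac{1}{1-\alpha}\dx u^P=0$; so on this chart $\mathcal{B}_A=\mathrm{diag}\big(\tfrac{u^P}{1-\alpha},\tfrac{u^P}{1-\alpha},\tfrac{u^P}{1-\alpha},\tfrac{u^P}{1-\alpha},0\big)$, giving $\tfrac{u^P}{1-\alpha}$ with multiplicity $4$ and $0$ once, with a full set of eigenvectors. For $\mathcal{B}_P$, using $\dt\rho^P=0$ one gets $\dt u^P+\tfrac{1}{\alpha\rho^P}\dx\Pi^P+\tfrac1\alpha\dx\phi=0$, $\dt\Pi^P+\tfrac{a^2}{\alpha\rho^P}\dx u^P=0$, $\dt\Tau^P-\tfrac1{\alpha\rho^P}\dx u^P=0$, while $\dt\rho^P$, $\dt\phi$ and $\dt E^P$ carry no spatial derivative of their own unknown; a one-line determinant then yields $\det(\mathcal{B}_P-\lambda I)=\lambda^4\big(\lambda^2-\tfrac{a^2}{\alpha^2(\rho^P)^2}\big)$, so the spectrum of $\mathcal{B}_P$ is $\pm\tfrac{a}{\alpha\rho^P}$ (simple) and $0$ with multiplicity $4$, and solving $\mathcal{B}_P v=0$ shows $\ker\mathcal{B}_P$ is $4$-dimensional --- in particular every element of it has vanishing $u^P$-component, which is what we need below.

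Assembling, the spectrum of $\mathcal{B}$ is the union $\{\pm\tfrac{a}{\alpha\rho^P},\ \tfrac{u^P}{1-\alpha}\ (\text{mult. }4),\ 0\ (\text{mult. }5)\}$, matching the statement. For diagonalizability (away from resonances): the two simple eigenvalues are immediate; for $\tfrac{u^P}{1-\alpha}$, which is not an eigenvalue of $\mathcal{B}_P$, any eigenvector must have zero pressure-part and so comes from $\ker\big(\mathcal{B}_A-\tfrac{u^P}{1-\alpha}I\big)$, a $4$-dimensional space; for the eigenvalue $0$, one lifts $\ker\mathcal{B}_P$ (dimension $4$) using that $\mathcal{C}$ kills it --- its elements have no $u^P$-component and $\mathcal{C}$ lives on the $u^P$-column --- and adjoins the $1$-dimensional kernel of $\mathcal{B}_A$ inside the advection block, for geometric multiplicity $5$. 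So geometric equals algebraic multiplicity throughout and $\mathcal{B}$ is $\RealSet$-diagonalizable. Linear degeneracy is then a short check: $0$ is constant; $\pm\tfrac{a}{\alpha\rho^P}$ depends only on $\rho^P$ while, since $\dt\rho^P=0$ is the $\rho^P$-equation, the associated eigenvector has zero $\rho^P$-component, so $\nabla_{\mathbf W}\lambda\cdot r=0$; and $\tfrac{u^P}{1-\alpha}$ depends only on $u^P$ while its eigenvectors have zero $u^P$-component, again giving $\nabla_{\mathbf W}\lambda\cdot r=0$.

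The main obstacle is the multiplicity bookkeeping at the fivefold eigenvalue $0$: one must rule out a Jordan block bridging the two diagonal blocks, and this rests precisely on the observation that $\mathcal{C}$ acts only through $\dx u^P$ together with the vanishing of the $u^P$-component on $\ker\mathcal{B}_P$. The coincidence configurations where $\tfrac{u^P}{1-\alpha}$ meets one of $0$ or $\pm\tfrac{a}{\alpha\rho^P}$ would need a short separate inspection (or the statement understood in the sense of a real, generically diagonalizable spectrum).
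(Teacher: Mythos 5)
Your proof is correct, but it takes a genuinely different route from the paper's. The paper (Appendix on the eigenstructure) first performs a change of variables to the stationary combinations $e^P-\tfrac{(\Pi^P)^2}{2a^2}$, $\Tau^P+\tfrac{\Pi^P}{a^2}$ and $\rho^A\Tau^A-\tfrac{\rho^P\Pi^P}{a^2}$, which makes the quasilinear matrix $\diagomatrix(\bW)$ sparse enough that all eleven eigenvectors can simply be written out; hyperbolicity and linear degeneracy are then read off directly from this explicit basis. You instead stay in primitive-type variables and exploit the block lower-triangular structure, with the key observations that the pressure block is autonomous, that the coupling block acts only through the $u^P$-column, and that $\ker\mathcal{B}_P$ has vanishing $u^P$-component (from the $\Pi^P$ and $\Tau^P$ rows); this gives the blockwise spectrum $\{0,\pm\tfrac{a}{\alpha\rho^P}\}$ and $\{\tfrac{u^P}{1-\alpha},0\}$, the correct multiplicities $4+5+1+1$, and diagonalizability by a lifting argument, with linear degeneracy obtained from the vanishing of the relevant eigenvector components ($r_{\rho^P}=0$ for the acoustic speeds, $r_{u^P}=0$ for the transport speed). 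What each buys: the paper's change of variables yields an explicit eigenvector basis in one shot (and the paper's appendix is actually written with factors of $2$, i.e.\ effectively $\alpha=1/2$, whereas your computation covers general $\alpha$ as in the statement); your argument requires the extra bookkeeping at the fivefold eigenvalue $0$ to exclude a Jordan block across the two blocks, which you handle correctly. Your caveat about resonance configurations (e.g.\ $\tfrac{u^P}{1-\alpha}=\pm\tfrac{a}{\alpha\rho^P}$) is not a defect relative to the paper: the paper's explicit eigenvectors $\br_{\pm}$ contain the factor $-\rho^A\rho^P/(\rho^P u^P\mp a)$ and degenerate at exactly the same configurations, a point the paper passes over silently.
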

The relaxation formulation~(\ref{eq: relaxed system param}\textsubscript{\(\relaxparam\)}) sheds some more light on the similarities between the flux-splitting we propose here and the acoustic/transport operator  splitting strategy presented in \cite{padioleau2019high}. Indeed, the source term and pressure effects can be treated separately from the advection terms. The difference is that although the operators are separated, they are re-distributed within a larger single system instead of two separate systems.

By discretizing the pressure and advection parts of \eqref{eq: flux-splitting discretization} identically than in section \ref{sect:cvx}, we re-obtain the same update formula \eqref{unsplit method}, which yields the FSLP scheme~\eqref{RS}. Finally, let us mention that it is possible to
build an alternate flux-splitting method for the system~\eqref{Euler's eqn}
by seeking the solution of the Riemann problem for (\ref{eq: relaxed system param}\textsubscript{\(\relaxparam\!=\!0\)}). This option is not studied in the present work.

\section{Numerical experiments}\label{numerical exp}

In this section, we consider that the fluid is a perfect gas with the EOS $p=(\gamma-1)\rho e$ and that the potential $\phi$ takes the form $\phi(x,y) = - gy$ for tests that involve the source term.

We will present numerical experiments with the FSLP method and the HLLC Riemann solver \cite{toro} using first and second-order discretizations.
The second-order accuracy is achieved using a MUSCL-Hancock strategy \cite{toro} for both the HLLC and FSLP solvers. Let us mention that the slope reconstruction is performed on the primitive variables with a minmod slope limiter \cite{Godlewski-1990,LeVeque2002,toro}. For the OSLP method, noting $(a /\rho )_{j+1/2}=\max \left(1/\rho_j^n, 1/\rho_{j+1}^n\right) a_{j+1 / 2}$, the time steps $\Dt$ is computed as follows:

\begin{eqnarray}
\Dt = C^\text{CFL}\ \Dx \ \frac{1}{max _{j \in \mathbb{Z}}\left[\max \left\{ 2\max\left[(a /\rho )_{j-1/2},(a /\rho )_{j-1/2}\right],\left(\left(u_{j-\frac{1}{2}}^*\right)^{+}-\left(u_{j+\frac{1}{2}}^*\right)^{-}\right)\right\} \right] },
\label{dt computation OSLP}
\end{eqnarray}

For the FSLP method, it is computed as follows:

\begin{eqnarray}
\Dt = C^\text{CFL}\ \Dx \ \frac{1}{max _{j \in \mathbb{Z}}\left[2\max\left[(a /\rho )_{j-1/2},(a /\rho )_{j-1/2}\right]+\left(\left(u_{j-\frac{1}{2}}^*\right)^{+}-\left(u_{j+\frac{1}{2}}^*\right)^{-}\right)\right] }
,
\label{dt computation FSLP}
\end{eqnarray}
where the parameter $C^\text{CFL}$ is given by the table \ref{table: CFL} so that the CFL conditions~\eqref{acoustic CFL} and\eqref{transport CFL} for the OSLP method, \eqref{CFL unsplit} for the first-order FSLP method and \eqref{CFL unsplit 2} for the second-order FSLP method are all checked. For the HLLC solver, the standard CFL from \cite{toro1994restoration} is used.

\begin{table}[h]
	\centering
\begin{tabular}{ |c|c|c| }
\hline
Numerical scheme& first-order & second-order  \\
\hline
OSLP & 1.0 & N.A. \\
\hline
FSLP & 1.0 & 1/2 \\
\hline
HLLC & 1.0 & 1/2 \\
\hline
\end{tabular}
\caption{Values for $C^\text{CFL}$ used in the simulations.}
\label{table: CFL}
\end{table}

The parameter \(\theta\) related to the low Mach correction is defined
at each interface $(i+1/2,j)$ and $(i,j+1/2)$ by
\begin{align}
	\theta_{i+1/2,j}^x&=\text{max}\left(\mid u_{i,j}\mid /c_i,\mid u_{i+1,j}\mid/c_{i+1,j}\right),
	&
	\theta_{i,j+1/2}^y&=\text{max}\left(\mid v_{i,j}\mid /c_i,\mid v_{i,j+1}\mid/c_{i,j+1}\right).
	\label{choice theta}
\end{align}
\hlb{Note that our choice for the computation of $\theta$ differs from \cite{chalons_all-regime_2016} that uses the inerface velocity $u^*, v^*$.
Both choices give satisfactory results and are valid estimations of the local Mach number $Ma$.
Depending on the interface values of velocities and pressure, one choice can be more or less diffusive than the other.
However, no significant differences have been observed in our experiments. }

\subsection{Sod shock tube test case}\label{sect:sod}
We consider here the classical Sod shock tube test case \cite{Sod1978,toro}: we set $\gamma = 1.4$ and the initial conditions are:
\begin{equation*}
    (\rho, u, p)(x,t=0) =  \begin{cases}
        (1,\ 0,\ 1) &\text{if} \quad x < 0.5,\\
        (0.125,\ 0,\ 0.1)  &\text{if} \quad  x>  0.5.
        \\
    \end{cases}
\end{equation*}
The goal of this test is to study the ability of our solver to handle different wave types. The initial discontinuity generates three waves: a leftward going rarefaction, a contact discontinuity, and a shock that both travel towards the right of the computational domain.
\begin{figure}
\centering
\begin{tabular}{cc}
		\includegraphics[width=0.49\linewidth]{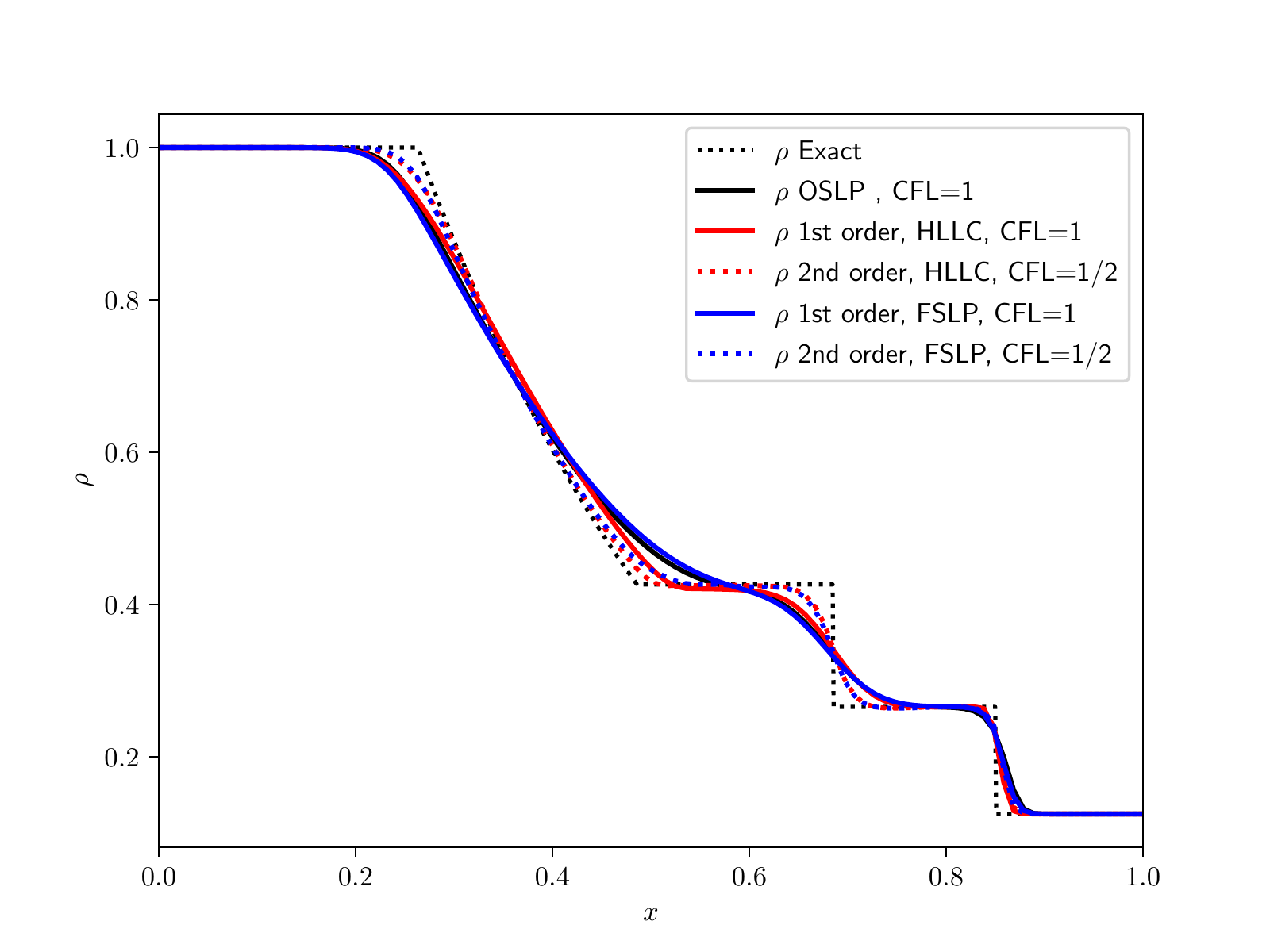}
	&
	\includegraphics[width=0.49\linewidth]{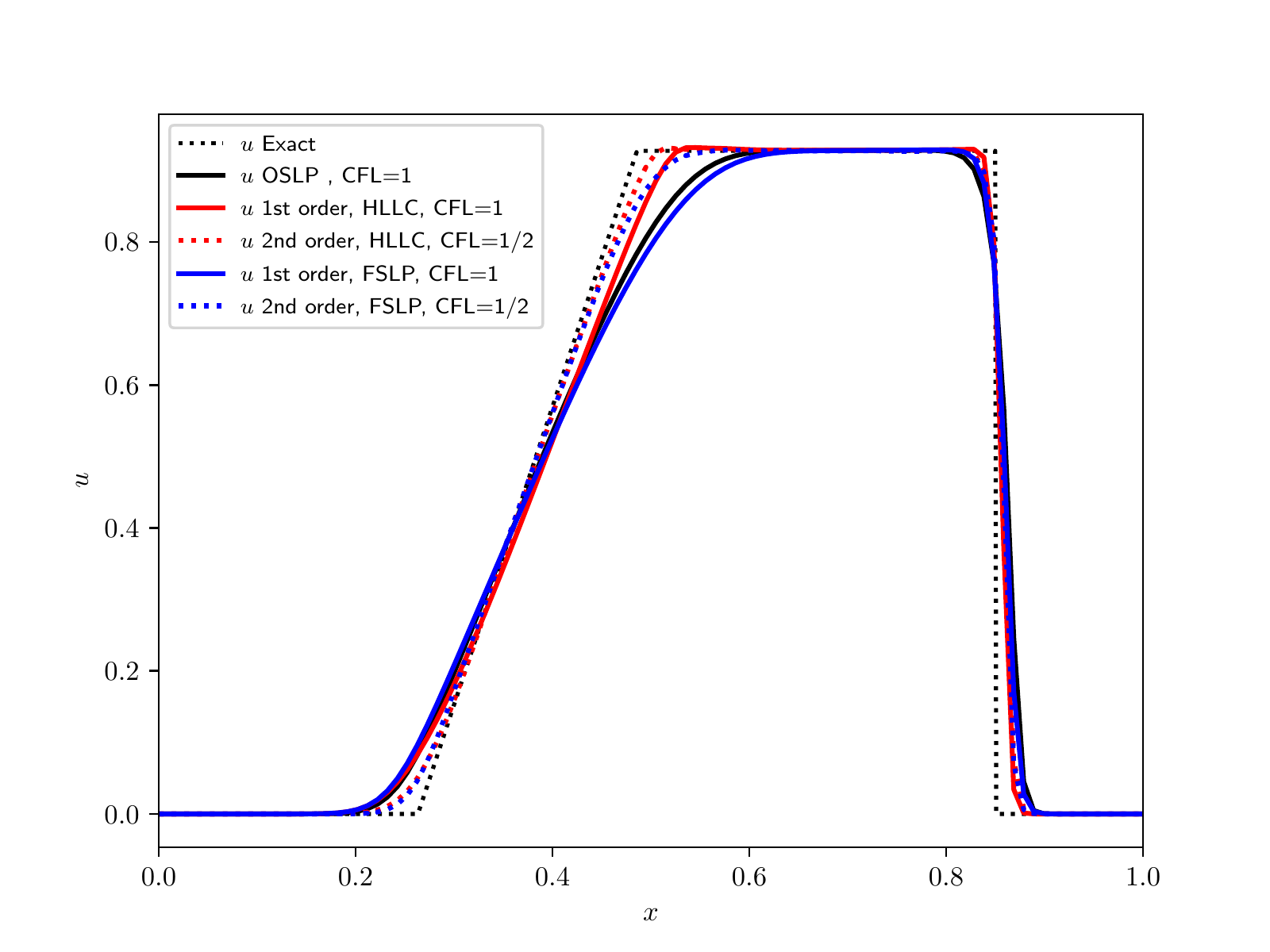}
	\\
	\includegraphics[width=0.49\linewidth]{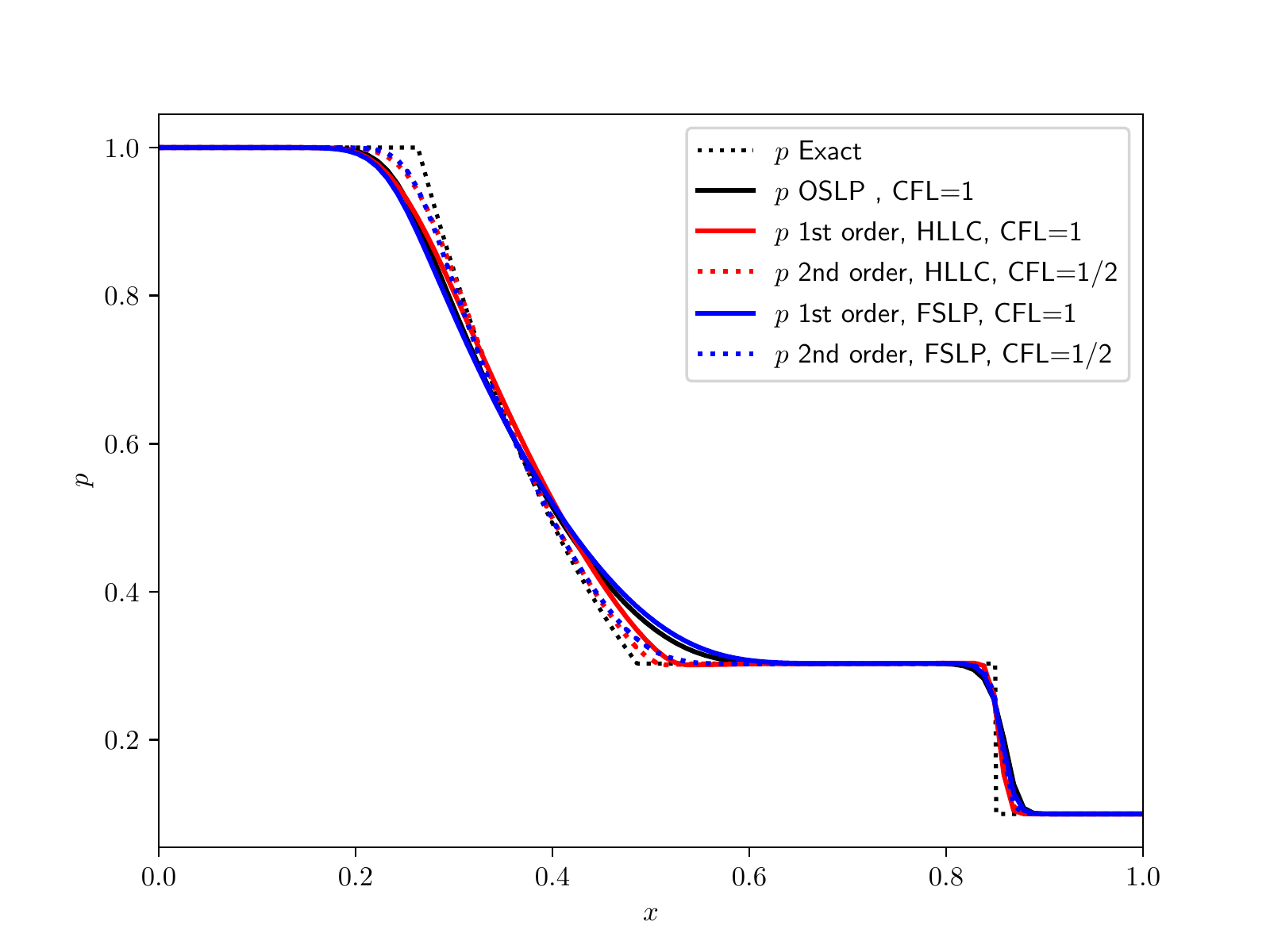}
	&
	\includegraphics[width=0.49\linewidth]{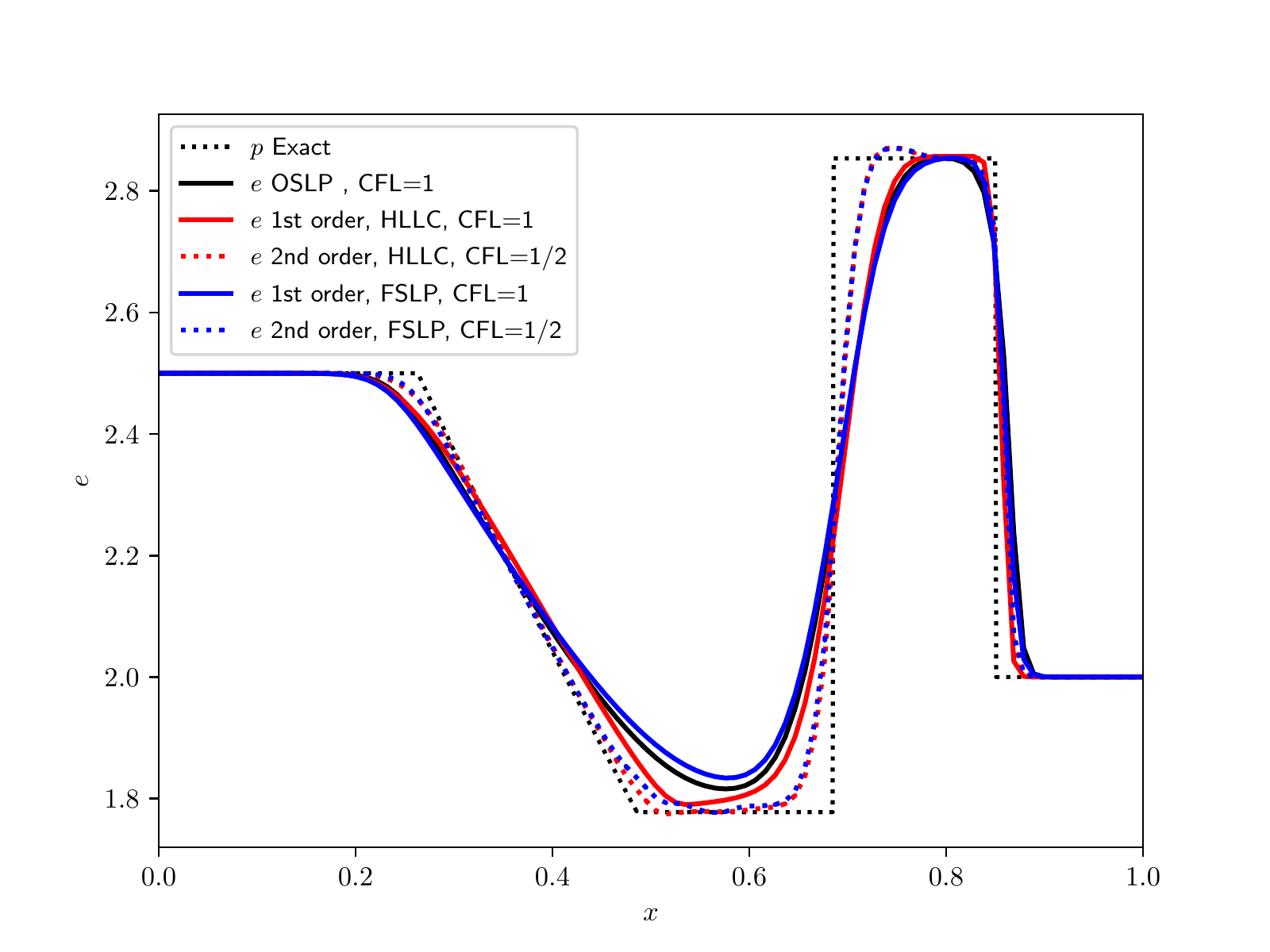}
\end{tabular}
        \caption{
        	Sod shock tube test case. Profile at $t=0.2s$ of the density (top left), velocity (top right), pressure (bottom left), and specific internal energy (bottom right).
        	The results are obtained with the OSLP method, first and second-order    FSLP method, first and second-order HLLC scheme, and the exact solution on a $100$-cell grid.
	}
    \label{sod}
\end{figure}
Figure~\ref{sod} shows the profile obtained at $t=0.2s$ with five different solvers: OSLP, FSLP/HLLC for the first and second-order methods. At first order, the HLLC solver provides the sharpest resolution of the shock and contact discontinuity. The differences between the FSLP and OSLP methods are hardly visible. None of the schemes suffers from spurious oscillations and
both the position and the amplitude of the waves match the exact solution. \hlo{We also note that the OSLP method 
is slightly sharper than the FSLP method on the rarefaction and contact discontinuity. In section \ref*{sec:isentropic_vortex}, we compare 
the accuracy of both method on the isentropic vortex test case.}

\subsection{Two-rarefaction test case}
We now consider the two-rarefaction test proposed by Einfeldt \cite{Einfeldt1991,toro}
for a perfect gas with $\gamma=1.4$. The initial conditions are
\begin{equation*}
	(\rho, u, p)(x,t=0) =  \begin{cases}
		(1,\ -2,\ 0.4), &\text{if $x < 0.5$,}\\
		(1,\ 2,\ 0.4),  &\text{if $x>  0.5$.}\\
	\end{cases}
\end{equation*}
The resulting wave pattern features two rarefaction waves that split from the position $x=0.5$, traveling towards each end of the computational domain.
As a result, a  near vacuum region presenting low densities and pressures  appears in the middle of the domain.

\begin{figure}
	\centering
\begin{tabular}{cc}
	\includegraphics[width=0.49\linewidth]{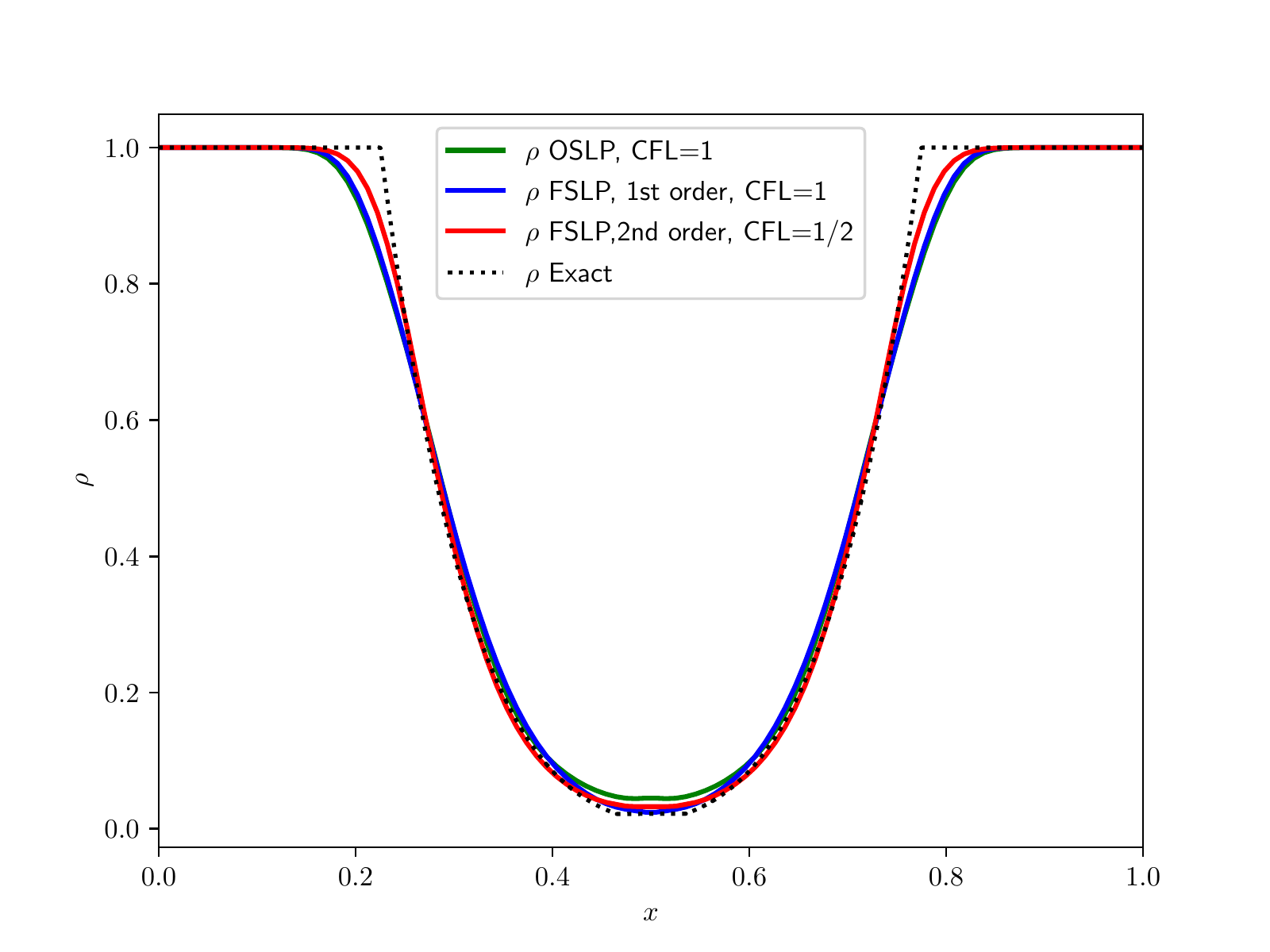}
	&
	\includegraphics[width=0.49\linewidth]{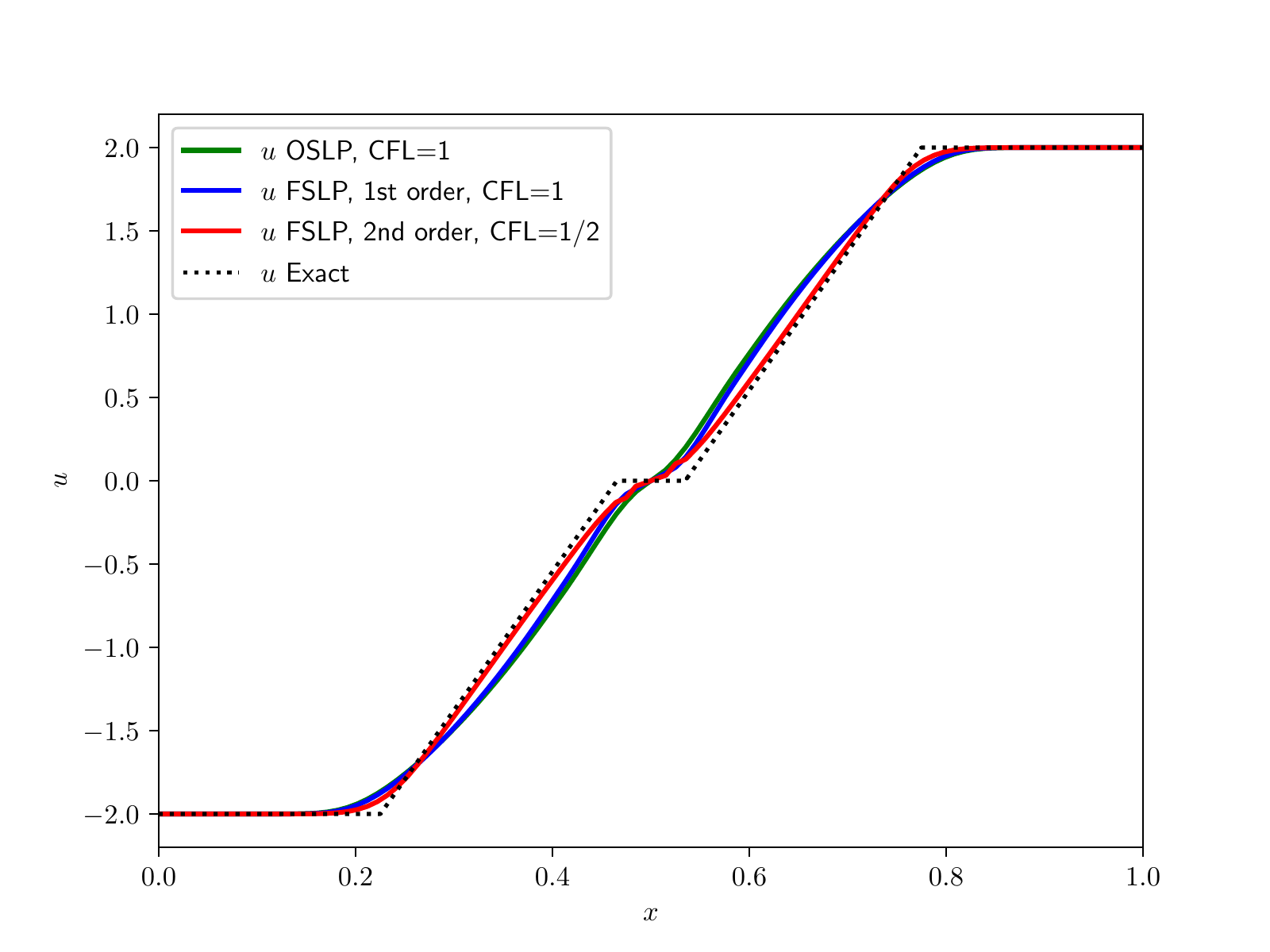}
	\\
	\includegraphics[width=0.49\linewidth]{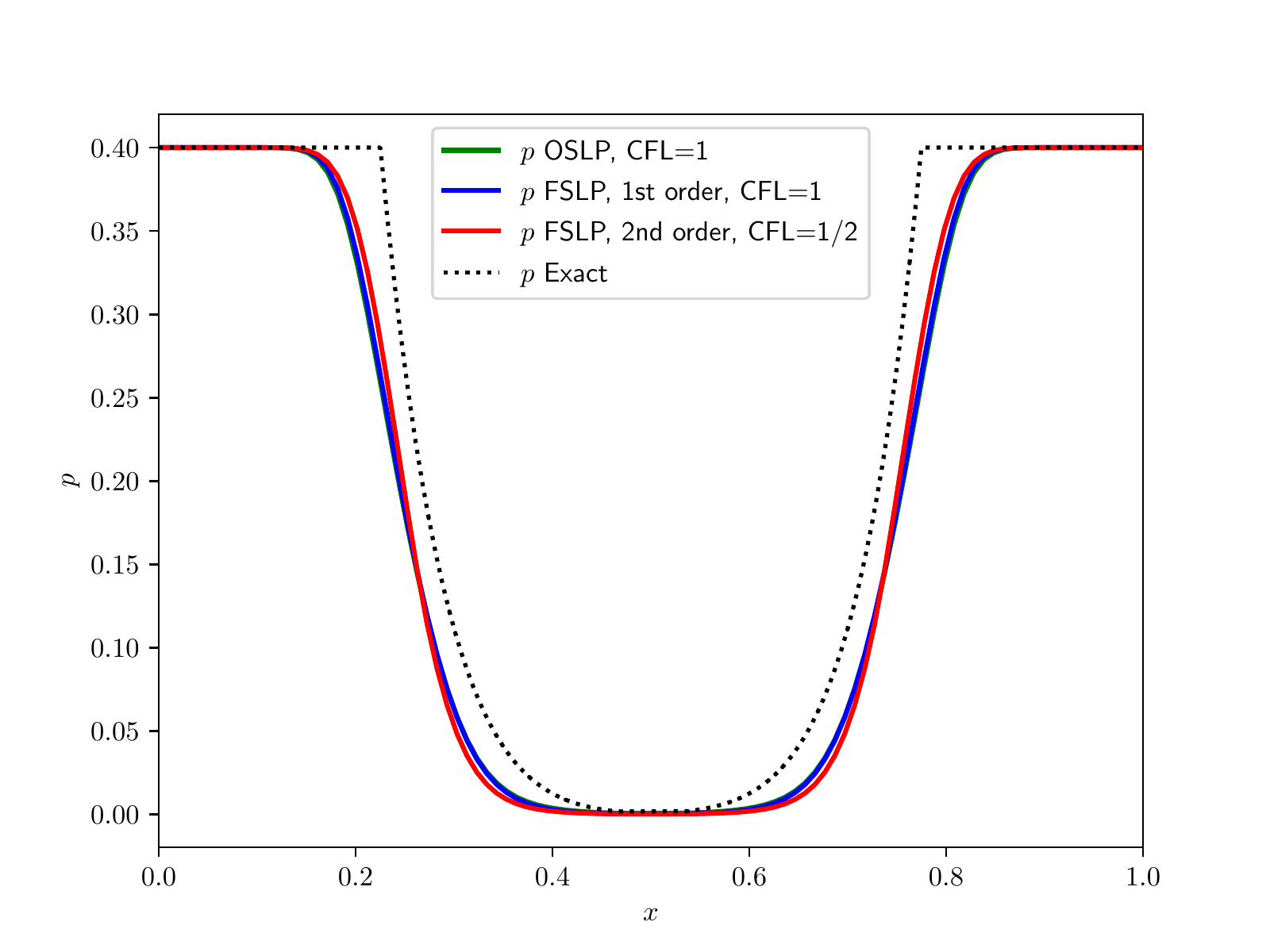}
	&
	\includegraphics[width=0.49\linewidth]{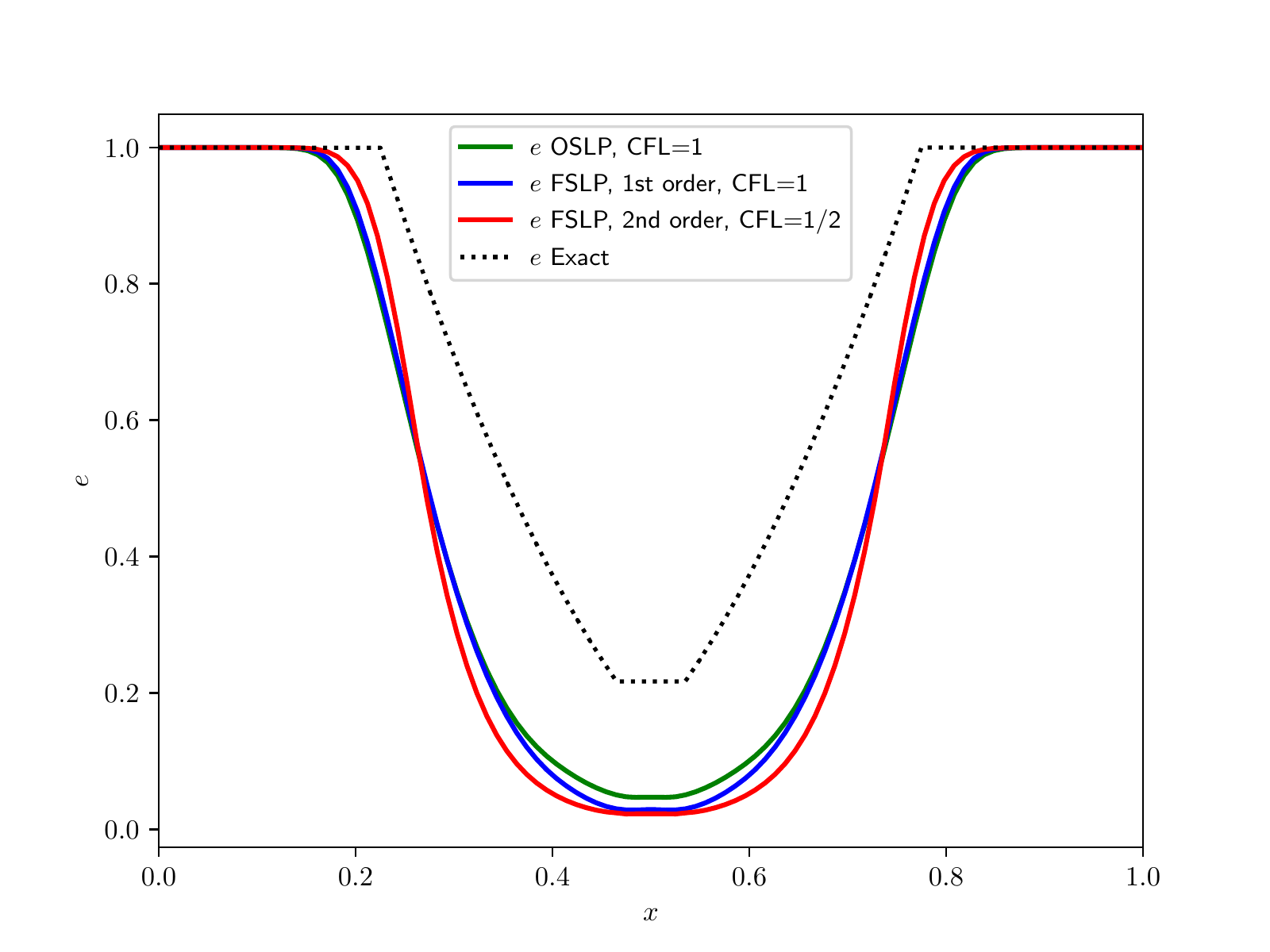}
\end{tabular}
\caption{
  Two-rarefaction test case.
  	Profile at $t=0.1s$ of the density (top left), velocity (top right), pressure (bottom left), and specific internal energy (bottom right).
The results are obtained with the OSLP method, the first and second-order FSLP method, and the exact solution on a $100$-cell grid.
}
 \label{raref}
\end{figure}

Figure~\ref{raref} shows that all methods are robust enough
to preserve positivity for mass, pressure, and energy so that they are able to reach the end of the simulation. Moreover, none of the numerical schemes exhibit entropy-related issues like the apparition of nonphysical shocks within the wave pattern.

\subsection{\hlo{Grid convergence -- The isentropic vortex test}}\label{sec:isentropic_vortex}
\hlo{The accuracy of our FSLP scheme equipped with a MUSCL-Hanckock strategy is considered on a classical 2D test problem
called the nonlinear isentropic vortex advection presented by Shu \cite{shu1998essentially}.
As in \cite{reyes2019variable}, we double the original domain size to avoid self-interactions
of the vortex across the periodic domain.
The test involves a circular region centered at $(x_c, y_c) = (10, 10)$ on a periodic square domain, 
$[0,20]\times [0,20]$, 
where a Gaussian-shaped vortex with a rotating velocity field is initialized.
The problem consists in advecting the vortex along the diagonal direction, therefore any departure from the initial condition (or the exact solution of the problem) will be considered numerical errors of the numerical method under consideration. The initial condition proposed in \cite{shu1998essentially} defines the values of the primitive variables at $t=0$ as follows}
\begin{subequations}
\begin{align}
\rho(x,y) &= \left[ 1 - \left(\gamma -1\right)\frac{\beta^2}{8\gamma\pi^2}e^{1-r^2}\right]^{\frac{1}{\gamma-1}},
\label{eq:vortex_IC_dens}\\
u(x,y) &= 1 - \frac{\beta}{2\pi}e^{\frac{1}{2}\left(1-r^2\right)}(y-y_c),\label{eq:vortex_IC_velx}\\
v(x,y) &= 1 + \frac{\beta}{2\pi}e^{\frac{1}{2}\left(1-r^2\right)}(x-x_c),\label{eq:vortex_IC_vely}\\
p(x,y) &= \rho(x,y)^\gamma,\label{eq:vortex_IC_pres}
\end{align}	
\end{subequations}
\hlo{with $r = r(x,y) = \sqrt{(x-x_c)^2 + (y-y_c)^2}$ and the vortex strength $\beta = 5.$
Due to the velocity field, $(u,v)=(1,1)$, the vortex is translated across the diagonal direction of the computational domain and returns to the initial position at $t = 20s$. The numerical error is then compared at this instant using the initial condition as the value of the exact solution.
We run 6 simulations corresponding to the resolutions 
$[Nx,Ny]=[N,N],\ N\in\{32,64,128,256,512,1024\}$ and display the $L^1$ and 
$L^\infty$ 
errors in figure \ref{fig:convergence}. 
The $L^1$  and $L^{\infty}$ errors are computed for the density as $\Delta x \Delta y\sum_{i,j}\mid\rho^n_{i,j}-\rho_0^{i,j}\mid$ and 
$\max_{i,j}\mid\rho^n_{i,j}-\rho_0^{i,j}\mid$ respectively.
One can see that convergence rate of the numerical method follows a second-order slope, validating our high-order extension.}

\begin{figure}
 \centering
 \includegraphics[width=0.5\linewidth]{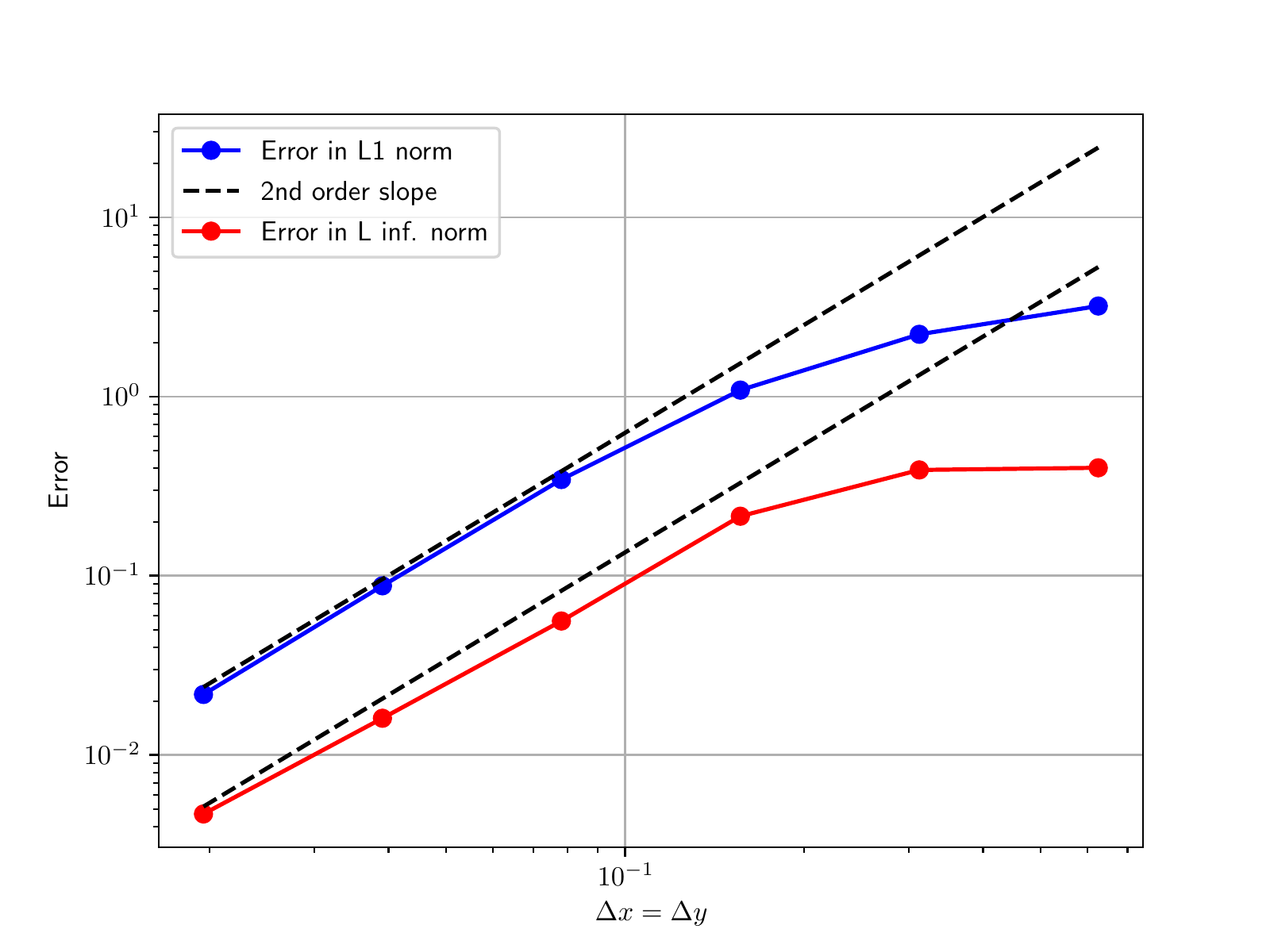}
 \caption{Convergence study of the FSLP method extended to second order via a MUSCL-Hancock strategy}
 \label{fig:convergence}
\end{figure}

\hlo{The isentropic vortex test cases also allows us to compare the accuracy of the OSLP and FSLP methods.
We ran two simulations on $512^2$ grids with both methods (at first order of accuracy). 
The $L^1$ error of the FSLP method is about $10\%$ higher than the OSLP method.
Note that this number may vary for different test cases and resolutions. In section~\ref*{sect:sod}, we also observed that the FSLP method is slightly less accurate on the Sod shock tube test case.}

\subsection{The Gresho Vortex}\label{section: gresho}
The Gresho vortex \cite{Gresho1990} involves a
a stationary vortex that can be parameterized by the maximum value of the Mach number $Ma$ across the computational domain. Therefore this test is very useful for studying the performance of numerical schemes in the low-Mach regime. We consider a perfect gas with $ \gamma = 1.4$. Using polar coordinates $(r, \theta)$, the initial conditions read:
\begin{subequations}
    \begin{align}
        \rho(r,\theta,t=0)&=1,
        \\
        \left(u_{r}, u_{\theta}\right)
        (r,\theta,t=0)
        &=
        \begin{cases}
            (0,5 r) & \text{if $0 \leq r<0.2$,} \\
            (0,2-5 r) & \text{if $0.2 \leq r<0.4$,} \\
            (0,0) & \text{if $0.4 \leq r$,}
        \end{cases}
        \\
        p(r,\theta,t=0)
        &=
        \begin{cases}
            p_{0}+12.5 r^{2}  & \text{if $0 \leq r<0.2$,} \\
            p_{0}+12.5 r^{2}+4-20 r+4 \ln (5 r)& \text{if $0.2 \leq r<0.4$,} \\
            p_{0}-2+4 \ln 2& \text{if $0.4 \leq r$,}
        \end{cases}
        \label{gresho}
    \end{align}
\end{subequations}
where $p_0 = \frac{1}{\gamma Ma^2}$.
For the simulations, we will use three different values for the reference Mach number:
$Ma\in\{10^{-1}, 10^{-3}, 10^{-5}\}$.
We will compare the distributions of the velocity magnitude obtained at $t=10^{-2}s$ with the
initial conditions.
\begin{figure}
    \centering
    \begin{tabular}{ccc}
        \includegraphics[width=0.3\linewidth]{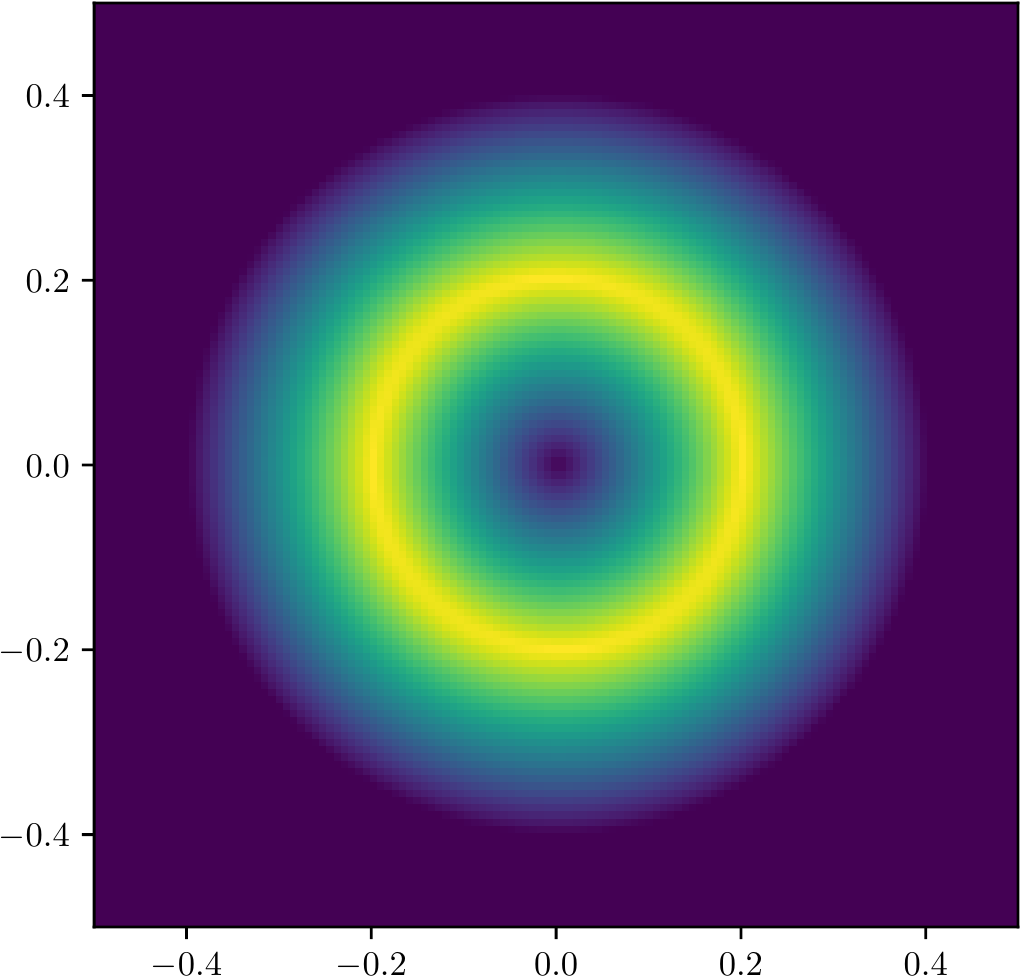}
        &
        \includegraphics[width=0.3\linewidth]{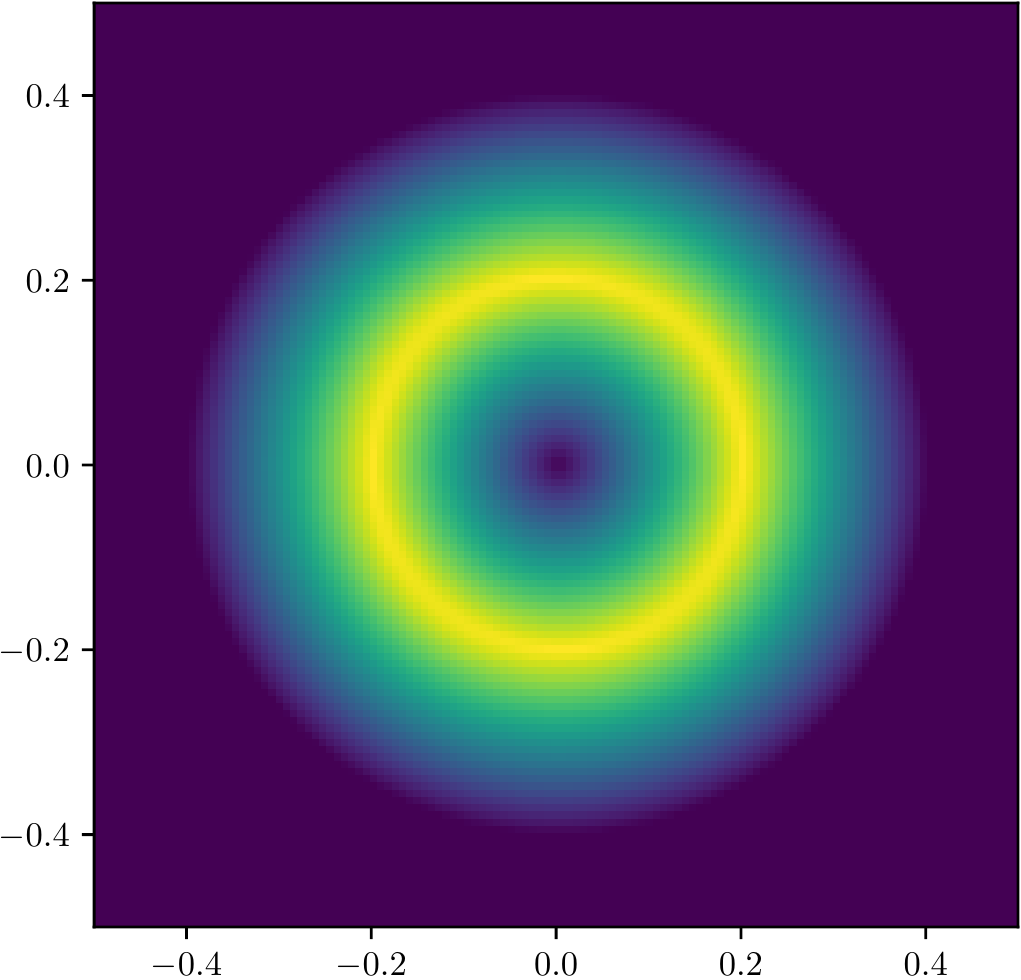}
        &
        \includegraphics[width=0.3\linewidth]{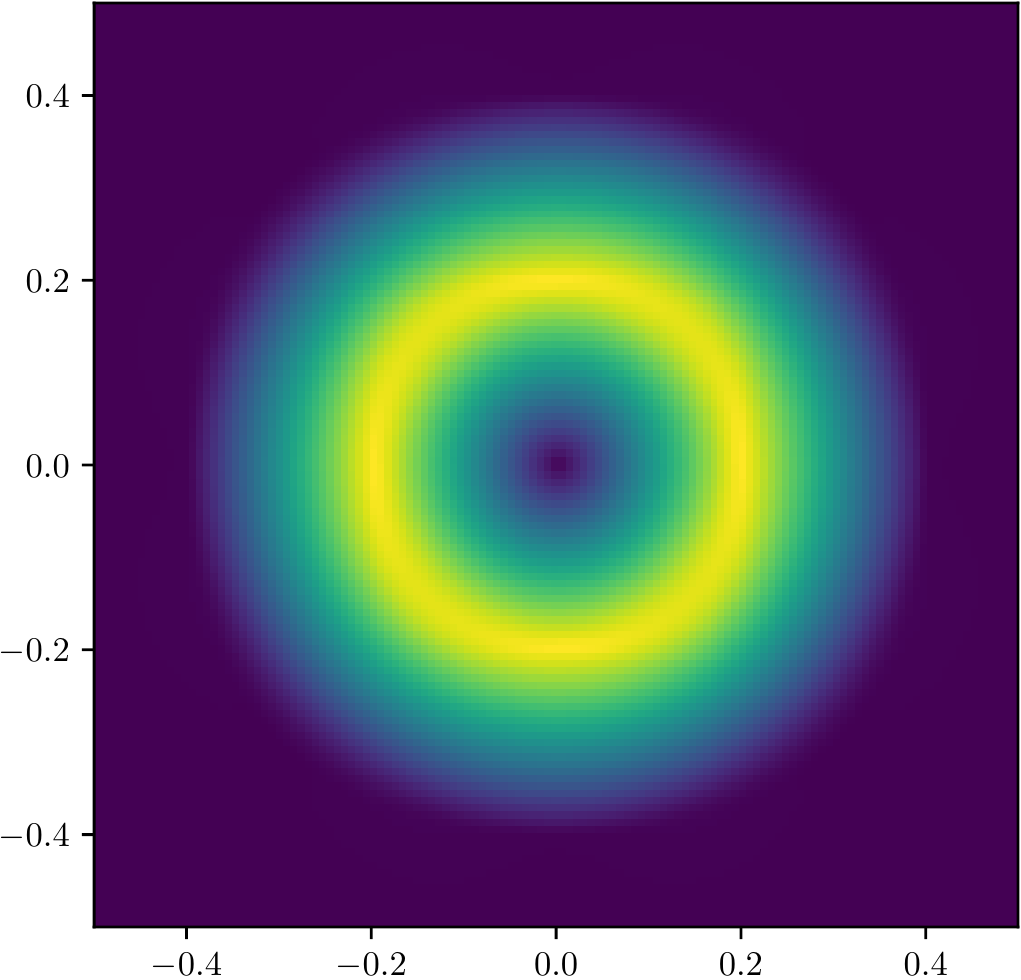}
        \\
        FSLP & OSLP & HLLC
        \\
        \multicolumn{3}{c}{
            \includegraphics[angle=-90,width=0.5\linewidth]{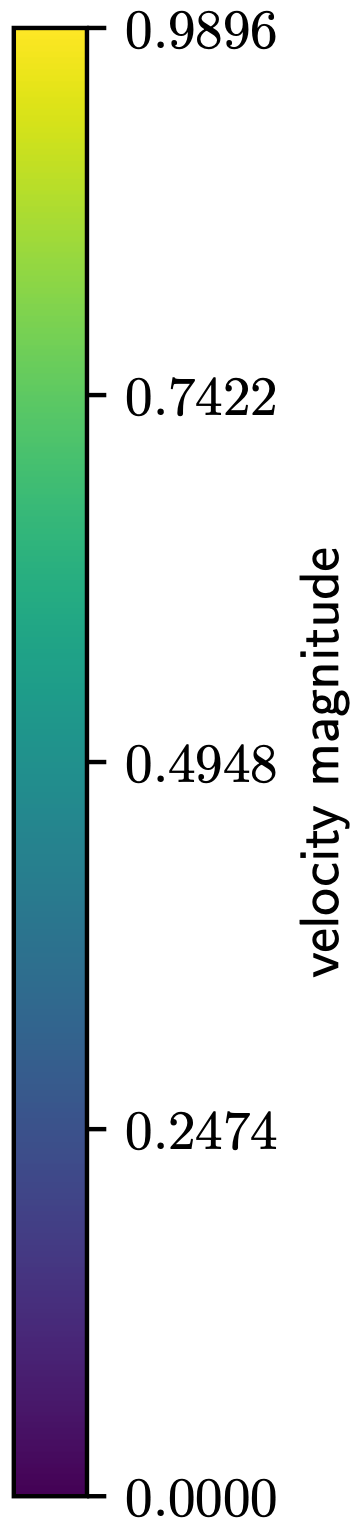}
        }
    \end{tabular}
    \caption{Comparison of the final velocity magnitude map for the Gresho vortex test case with $Ma=10^{-1}$ obtained with the FSLP, OSLP, and HLLC solvers on a $128\times 128$-cell grid at $t=0.1s$.}
    \label{greshofig1}
\end{figure}

\begin{figure}
    \centering
    \begin{tabular}{ccc}
        \includegraphics[width=0.3\linewidth]{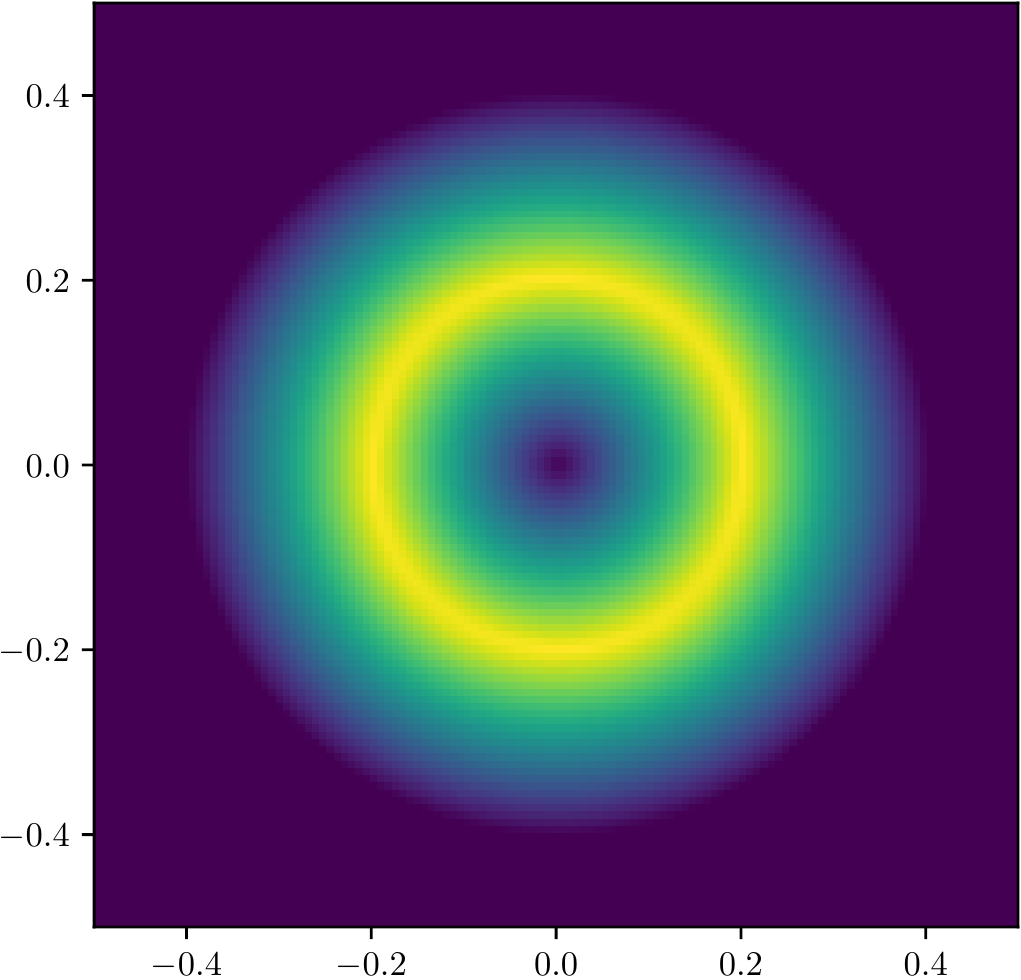}
        &
        \includegraphics[width=0.3\linewidth]{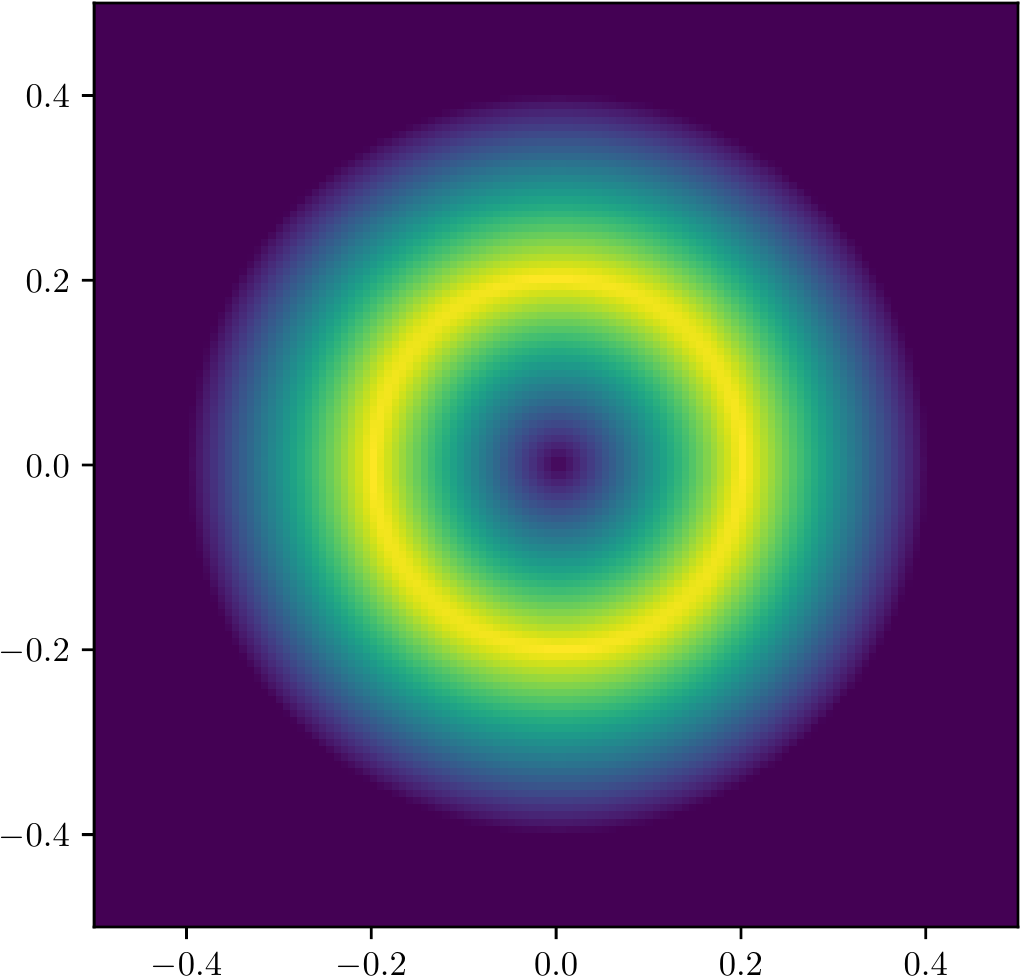}
        &
        \includegraphics[width=0.3\linewidth]{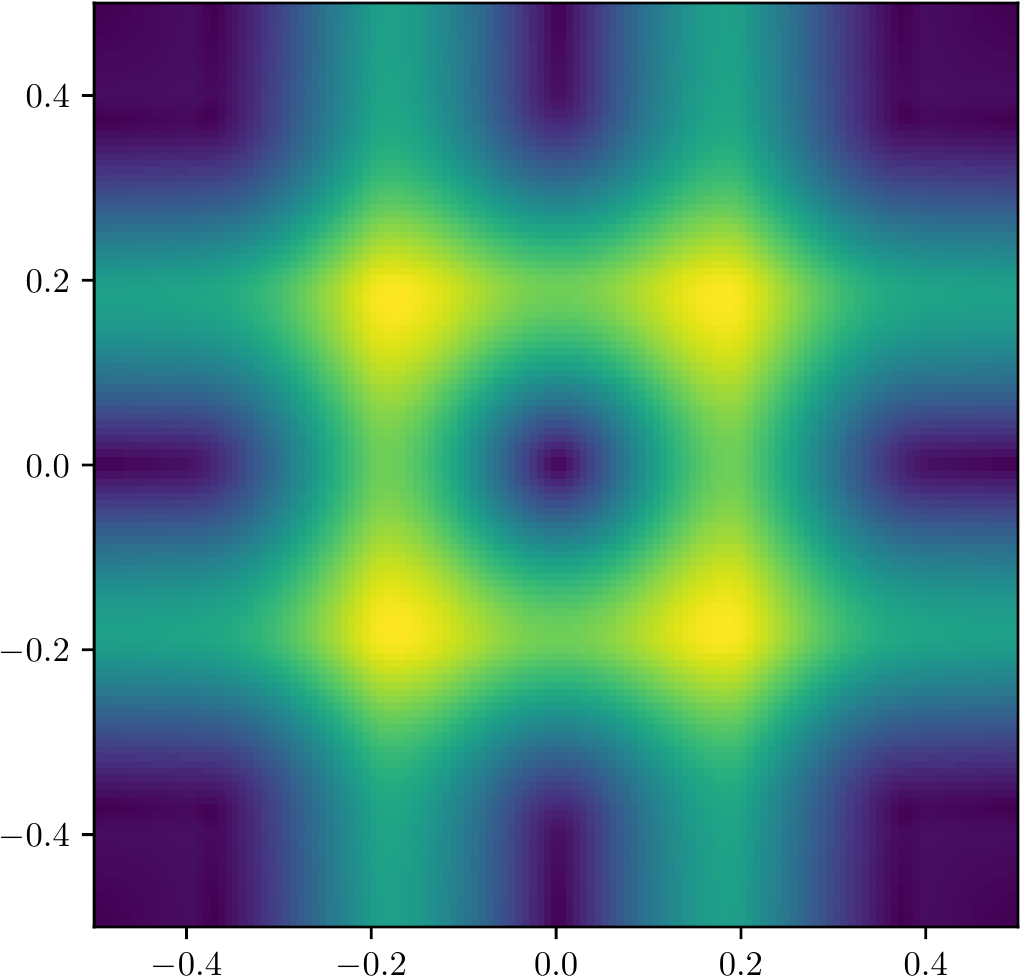}
        \\
        FSLP & OSLP & HLLC
        \\
        \multicolumn{3}{c}{
            \includegraphics[angle=-90,width=0.5\linewidth]{colorbar_gresho.png}
        }
    \end{tabular}
    \caption{Comparison of the final velocity magnitude map for the Gresho vortex test case with $Ma=10^{-3}$ obtained with the FSLP, OSLP, and HLLC solvers on a $128\times 128$-cell grid at $t=0.1s$.}
    \label{greshofig3}
\end{figure}

\begin{figure}
    \centering
    \begin{tabular}{ccc}
        \includegraphics[width=0.3\linewidth]{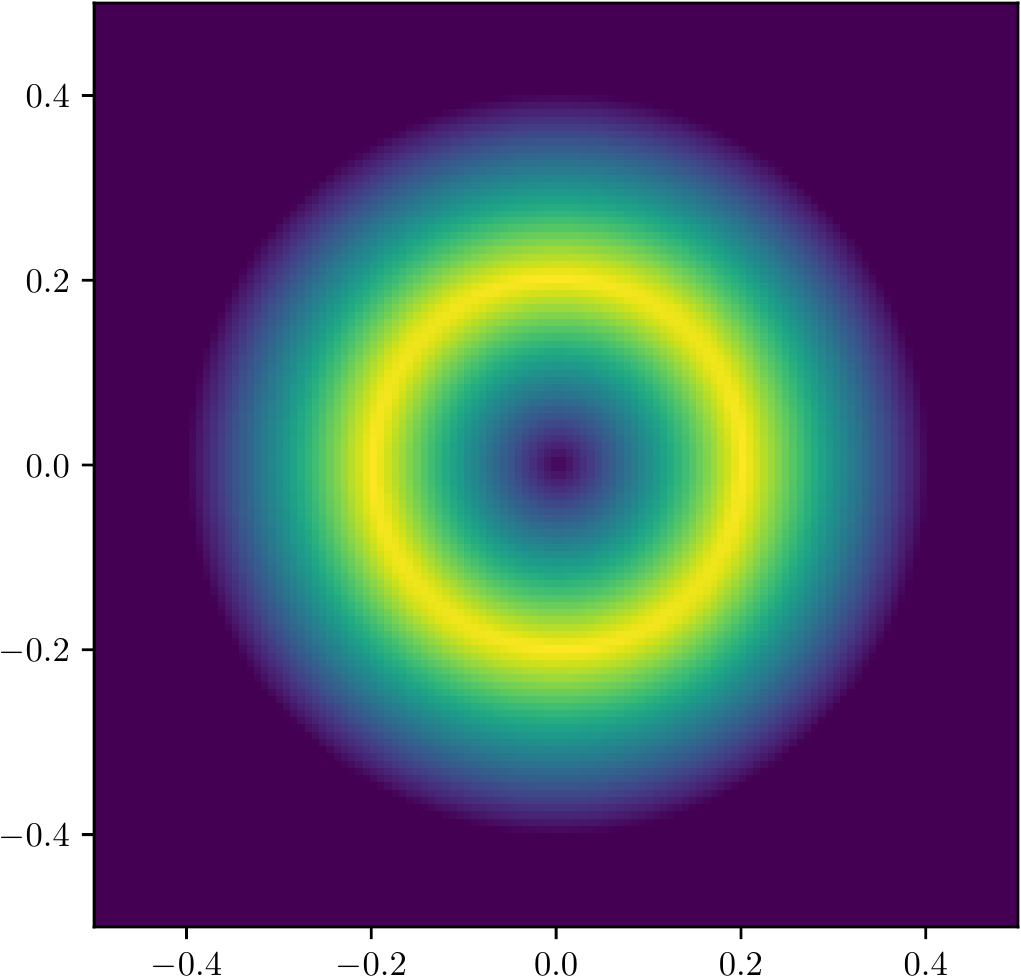}
        &
        \includegraphics[width=0.3\linewidth]{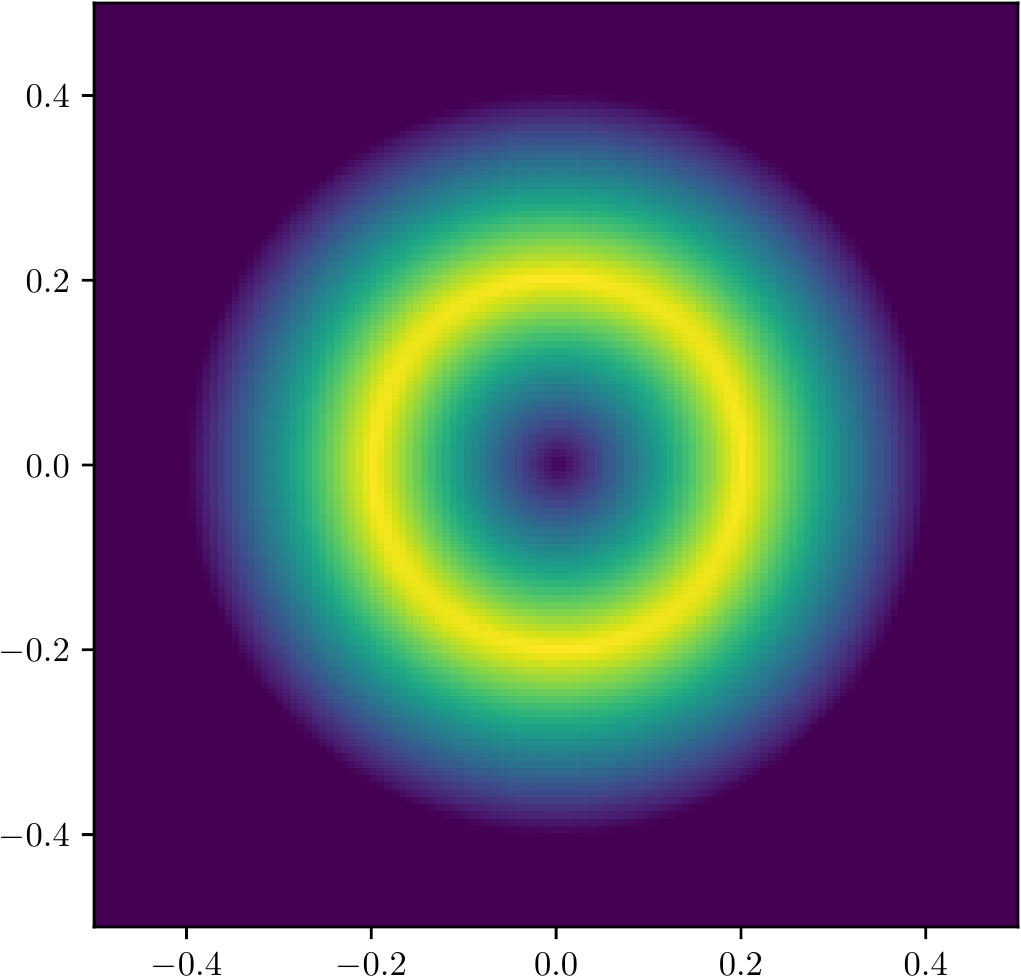}
        &
        \includegraphics[width=0.3\linewidth]{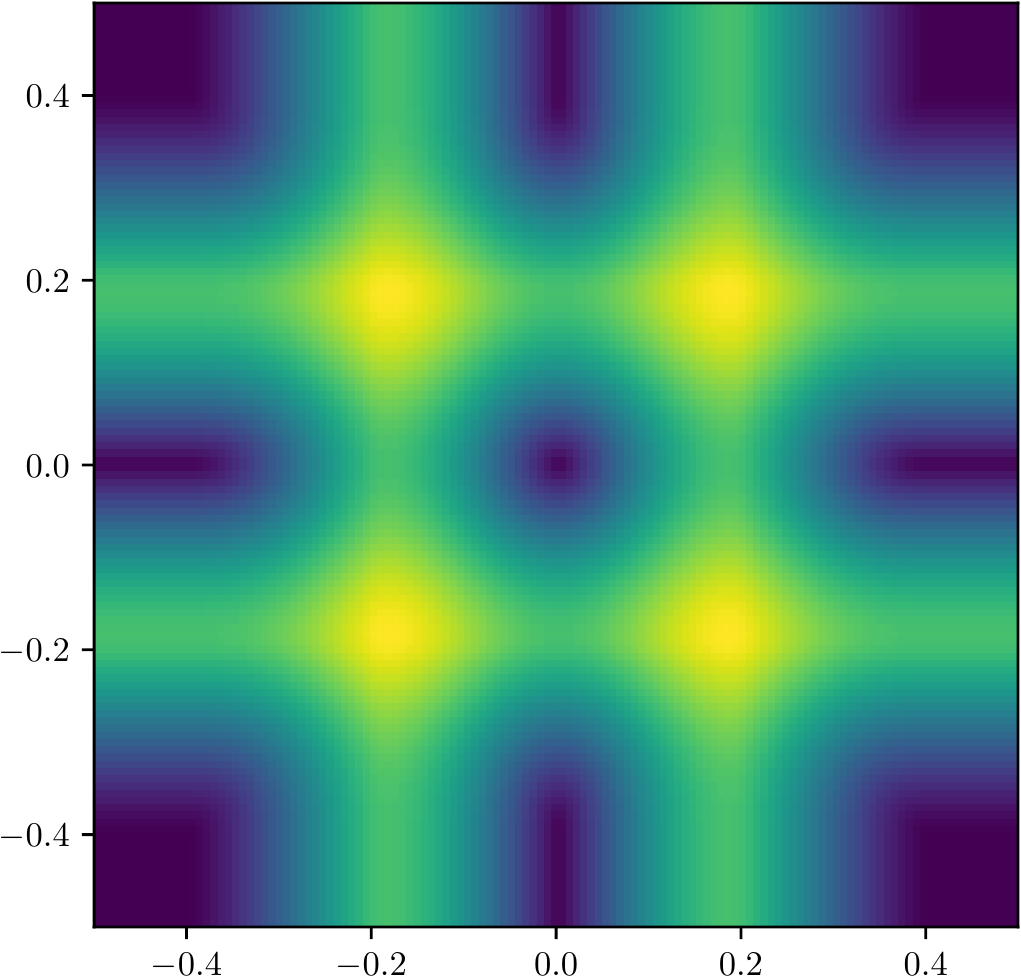}
        \\
        FSLP & OSLP & HLLC
        \\
        \multicolumn{3}{c}{
            \includegraphics[angle=-90,width=0.5\linewidth]{colorbar_gresho.png}
        }
    \end{tabular}
    \caption{Comparison of the final velocity magnitude map for the Gresho vortex test case with $Ma=10^{-5}$ obtained with the FSLP, OSLP, and HLLC solvers on a $128\times 128$-cell grid at $t=0.1s$.}
    \label{greshofig5}
\end{figure}
Figures \ref{greshofig1}, \ref{greshofig3}, \ref{greshofig5} give us the final velocity magnitude map for the Gresho vortex obtained with different solvers and Mach numbers.
For  $Ma = 10^{-1}$, we can see in figure~\ref{greshofig1} that on all three simulations, the initial velocity ring is preserved.
Figure~\ref{greshofig3} displays the results for $Ma = 10^{-3}$: one can see that the FSLP and OSLP methods can both preserve the velocity ring thanks to the low Mach correction while the HLLC methods fail to do so.
The same behavior is observed for $Ma = 10^{-5}$ (see figure~\ref{greshofig5}).
In order to measure the numerical diffusion effect of the solver, we evaluate the ratio $e_\text{kin}/e_\text{kin}^0$ of the kinetic energy obtained at the final instant and the initial instant with
\begin{align}
    e_\text{kin} &= \sum_{j}\frac{1}{2}\rho_j^n \qty( (u_j^n)^2 + (v_j^n)^2 ) \Delta x^2
    ,&
    e^0_\text{kin} &= \sum_{j}\frac{1}{2}\rho_j^0 \qty( (u_j^0)^2 + (v_j^0)^2 )\Delta x^2
    .
\end{align}
The results are displayed in table~\ref{table: gresho kinetic energy}.
\begin{table}[h]
    \caption{Gresho vortex test case: evaluation of the kinetic energy in the computational domain for different values of the Mach number $Ma$.}
    \label{table: gresho kinetic energy}
    \centering
    \begin{tabular}{ |c|c|c|c| }
        \hline
        & $Ma=10^{-1}$ & $Ma=10^{-3}$ & $Ma=10^{-5}$ \\
        \hline
        $e_{kin}/e_{kin}^0$ (at $t=10^{-2}$) --- OSLP scheme & 0.9966 & 0.9966 & 0.9966 \\
        \hline
        $e_{kin}/e_{kin}^0$ (at $t=10^{-2}$) --- FSLP scheme & 0.9966 & 0.9966 & 0.9966 \\
        \hline
        $e_{kin}/e_{kin}^0$ (at $t=10^{-2}$) --- HLLC scheme& 0.9762 & 0.5262 & 0.5167 \\
        \hline
    \end{tabular}
\end{table}
They show that both FSLP and OSLP solvers better preserve the kinetic energy than the HLLC method in the low Mach regime.

\subsection{Two-dimensional Riemann problems}
We now intend to study the ability of the FSLP method to capture
more complex wave patterns in a two-dimensional setting, including shocks and rarefaction waves. To that end, we consider the popular 2D Riemann problem of the literature referred to as Configuration~3 in \cite{Liska2003}. The computational domain is the rectangle $[0,1]\times[0,1]$, with the initial conditions
\begin{equation}
    (\rho, u, v, p)(x,y,t=0) =
    \begin{cases}
        (0.138 ,\ 1.206 ,\ 1.206  ,\ 0.029) &\text{if $x < 0.8$, $y < 0.8$\quad (bottom left)}\\
        (0.5323,\ 0.0   ,\ 1.206  ,\ 0.3) &\text{if $x > 0.8$, $y < 0.8$\quad (bottom right)}\\
        (0.5323,\ 1.206 ,\ 0.0    ,\ 0.3) &\text{if $x < 0.8$, $y > 0.8$\quad (top left)}\\
        (1.5   ,\ 0.0   ,\ 0.0    \ 1.5) &\text{if $x > 0.8$, $y > 0.8$\quad (top right).}
    \end{cases}
\end{equation}
We impose homogeneous Neumann conditions at the boundaries. We compute a reference solution thanks to a second-order HLLC method on a $384\times384$-grid. The waves at play produce a jet that propagates along the diagonal $x=y$ creating an important low Mach region in the center and the top right part of the domain (see figure~\ref{2DRP Mach}).

\begin{figure}
	\centering
		\includegraphics[width=0.49\linewidth]{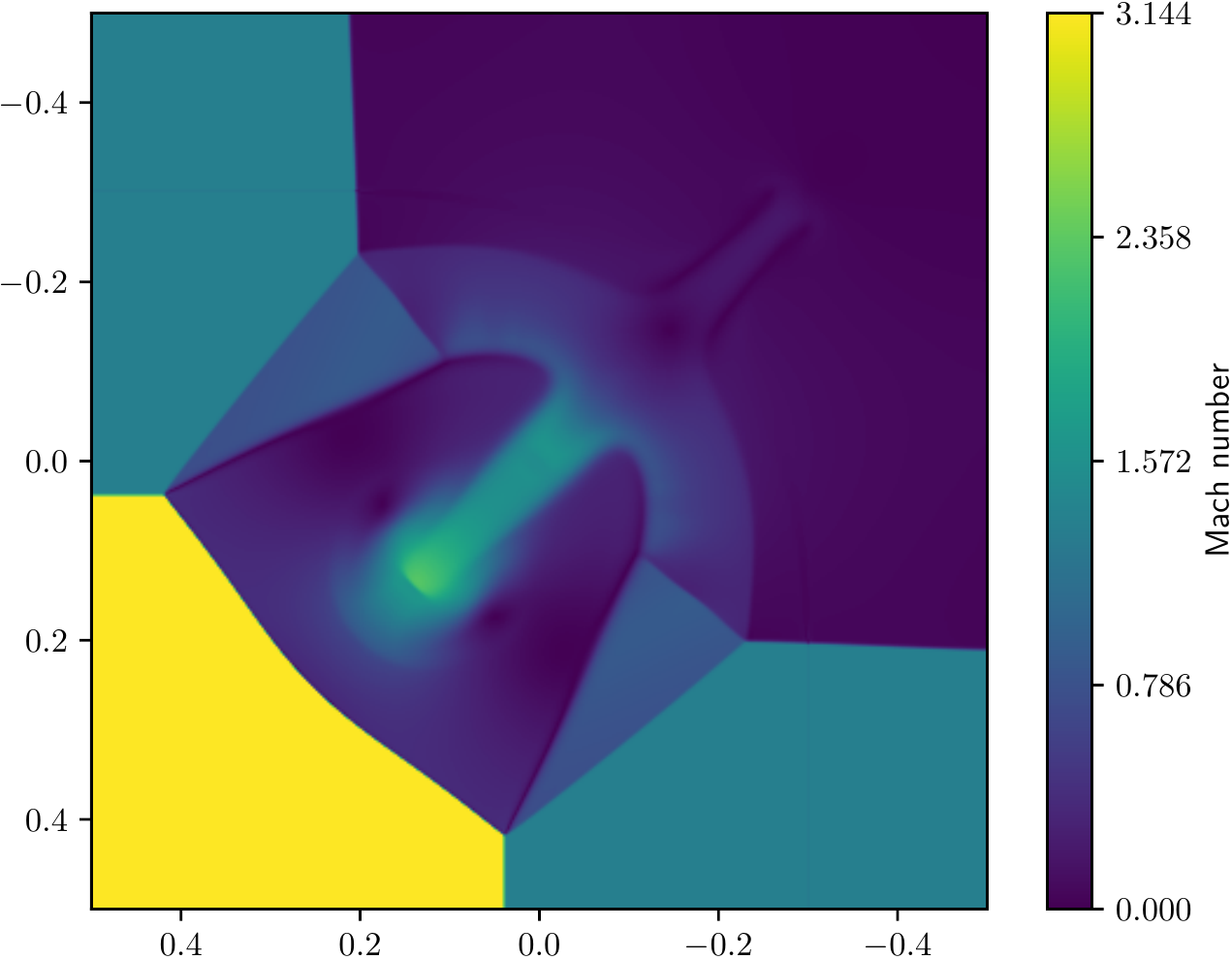}
\caption{2D Riemann problem. Mapping of the Mach number as $t=0.8s$. The reference simulation is obtained with a second-order HLLC method on a $384\times384$-cell mesh.}
s\label{2DRP Mach}
\end{figure}

\begin{figure}
\centering
\begin{tabular}{cc}
\includegraphics[width=0.49\linewidth]{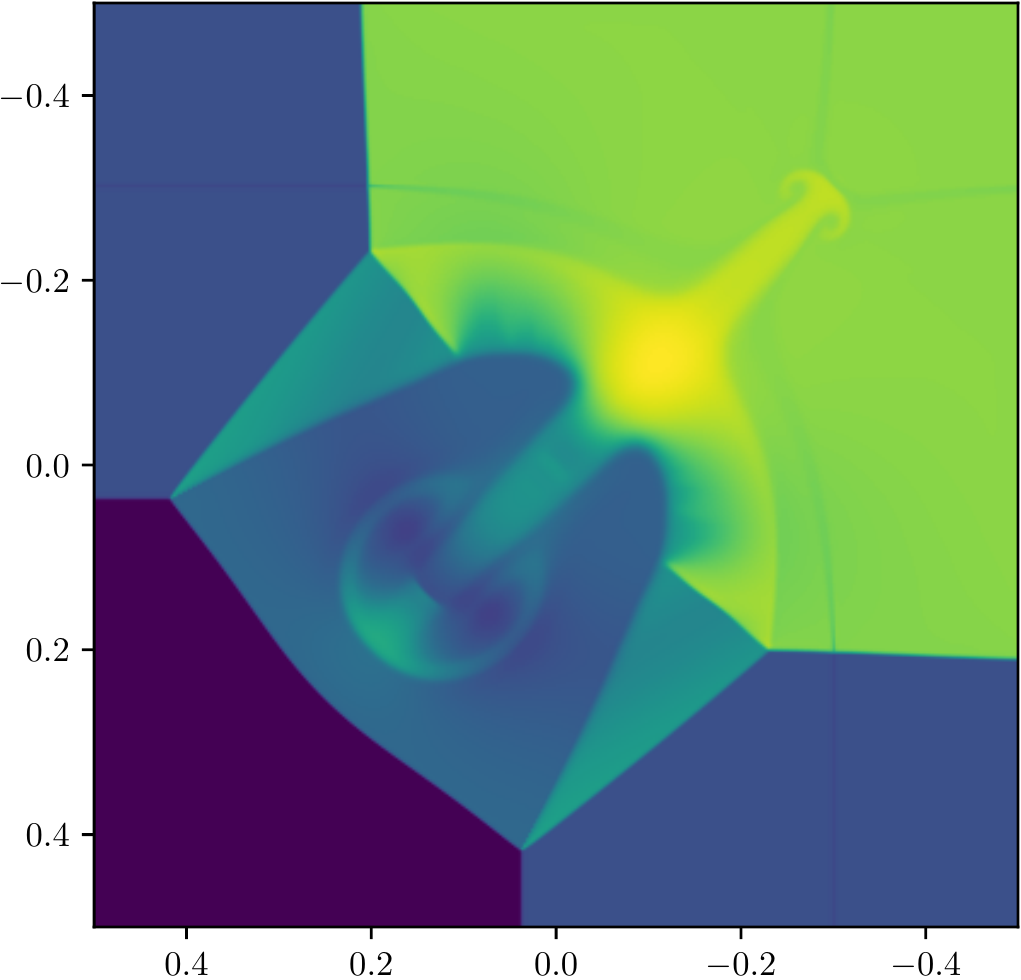}
	&
\includegraphics[width=0.49\linewidth]{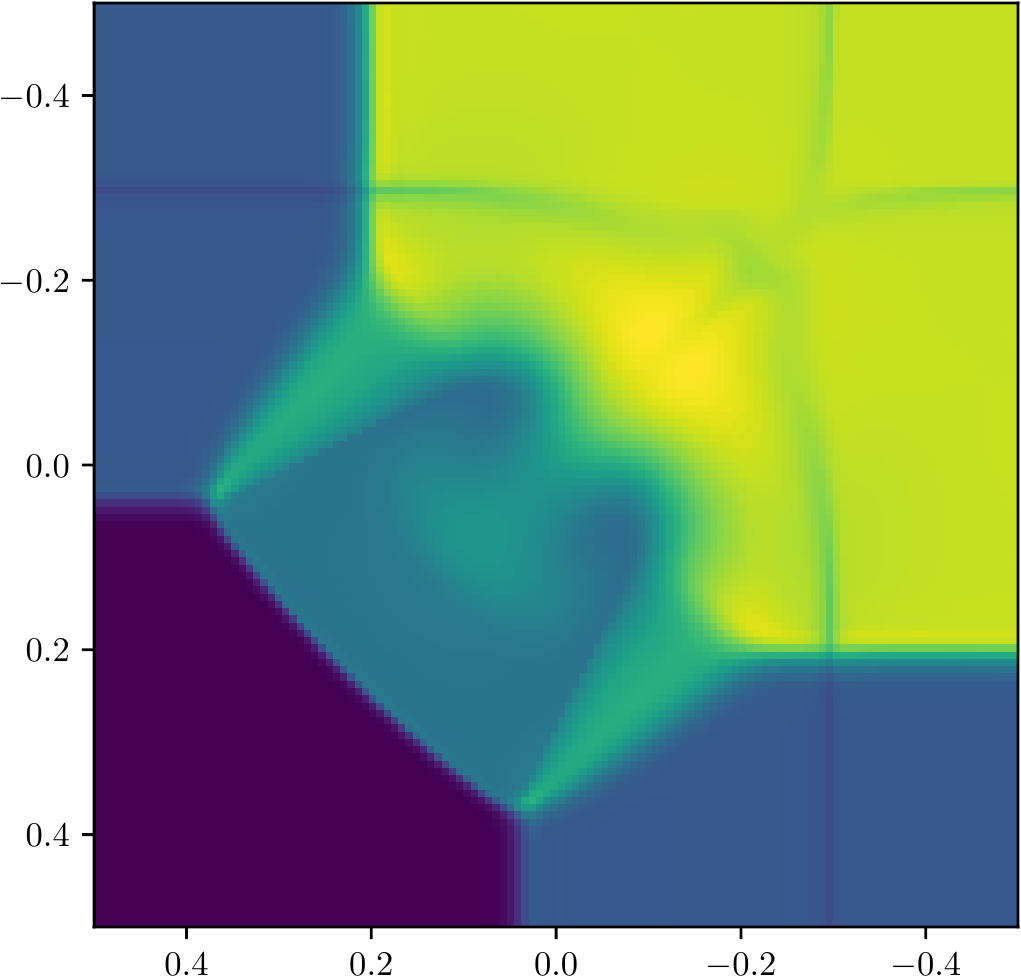}
\\
Reference result
&
FSLP method (order 1)
	\\
\includegraphics[width=0.49\linewidth]{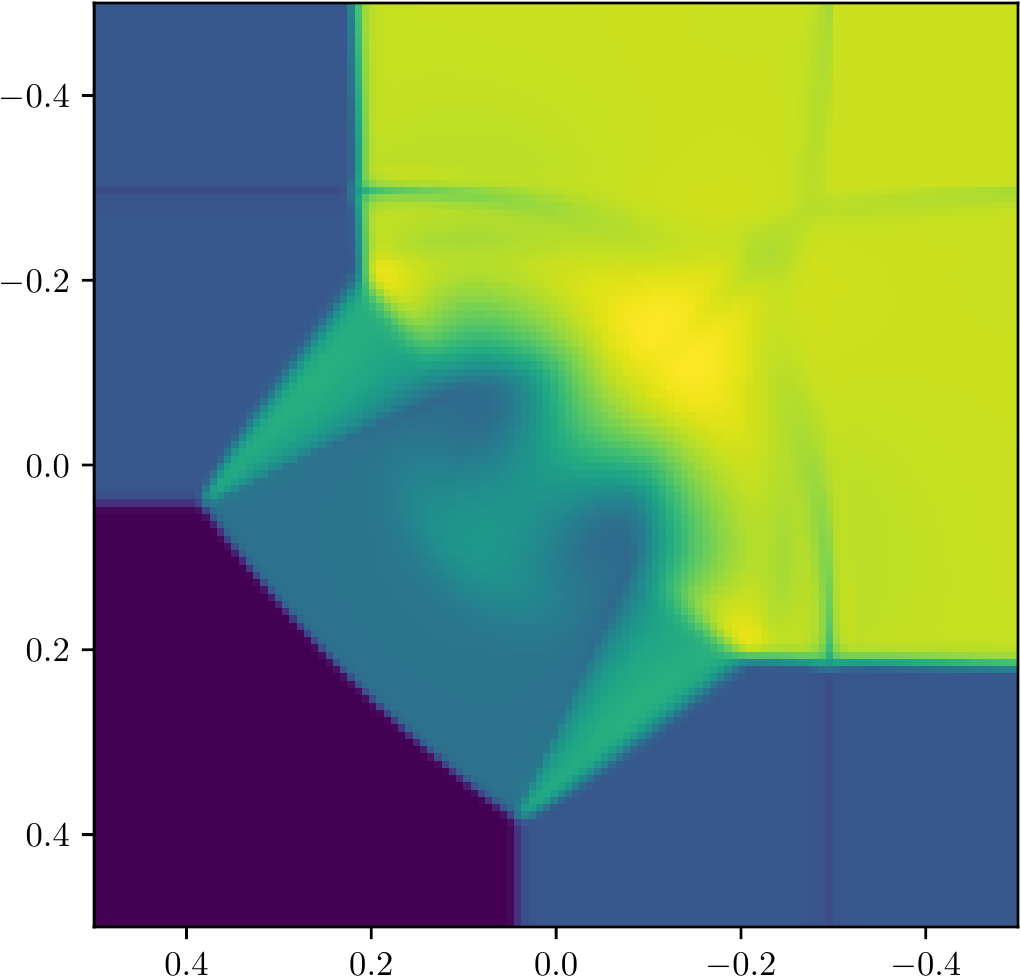}
&
	\includegraphics[width=0.49\linewidth]{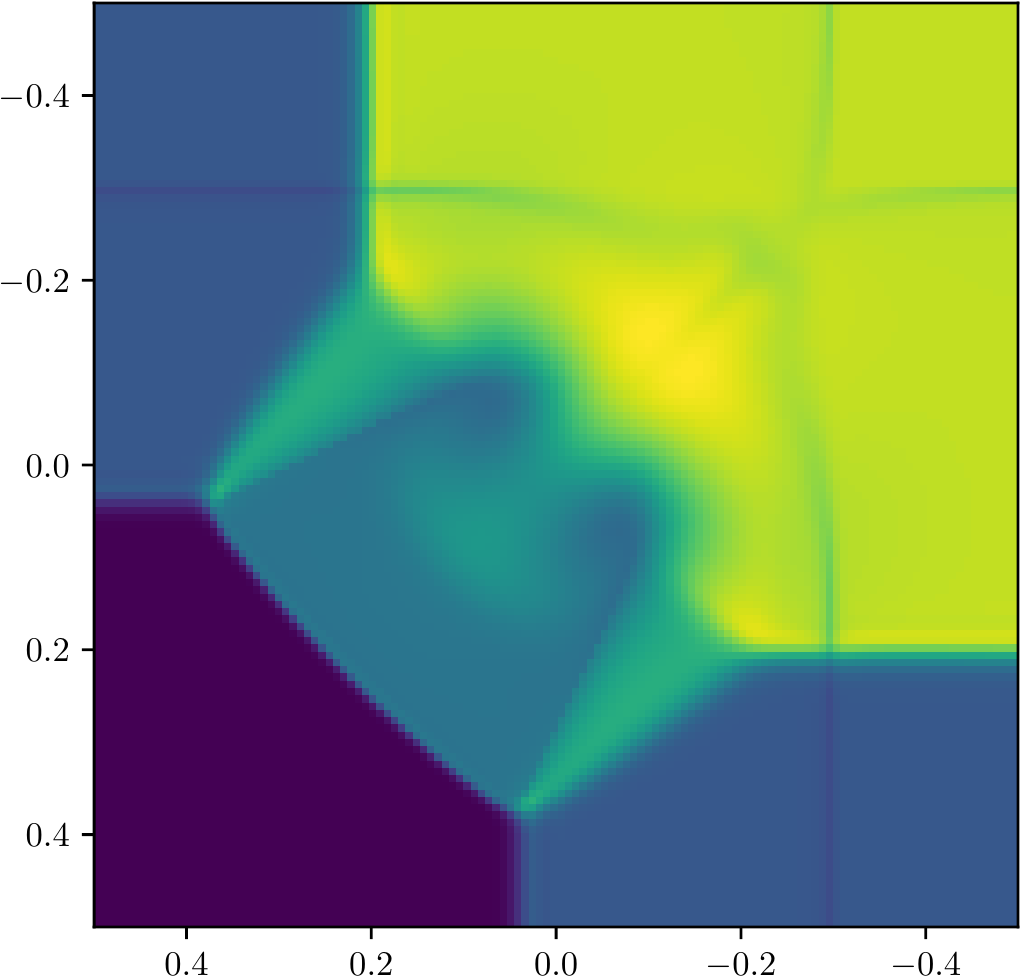}
\\
HLLC method (order 1)
&
OSLP method (order 1)
	\\
\multicolumn{2}{c}{
\includegraphics[angle=-90,width=0.5\linewidth]{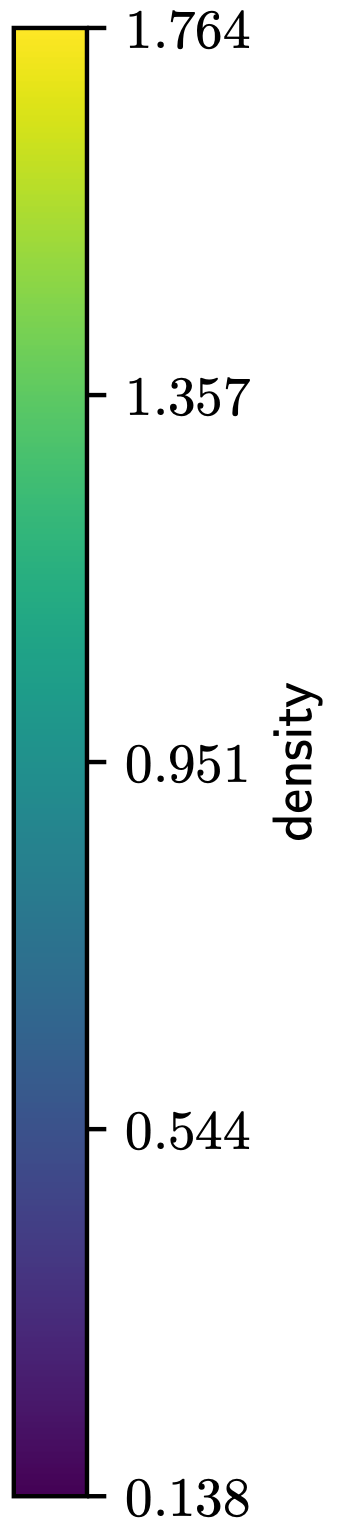}
	}
\end{tabular}
\caption{2D Riemann problem. Mapping of the density number as $t=0.8s$.
The reference simulation is obtained with a second-order HLLC method on a $384\times384$-cell mesh. The other simulations are performed on a $128\times128$-cell grid with the first-order FSLP method (top right), the HLLC first-order method (bottom left), and the first-order OSLP method (bottom right).
}
\label{2DRP density}
\end{figure}
\begin{figure}
	\centering
	\begin{tabular}{cc}
		\includegraphics[width=0.49\linewidth]{2DRP_ref_d.pdf}
		&
		\includegraphics[width=0.49\linewidth]{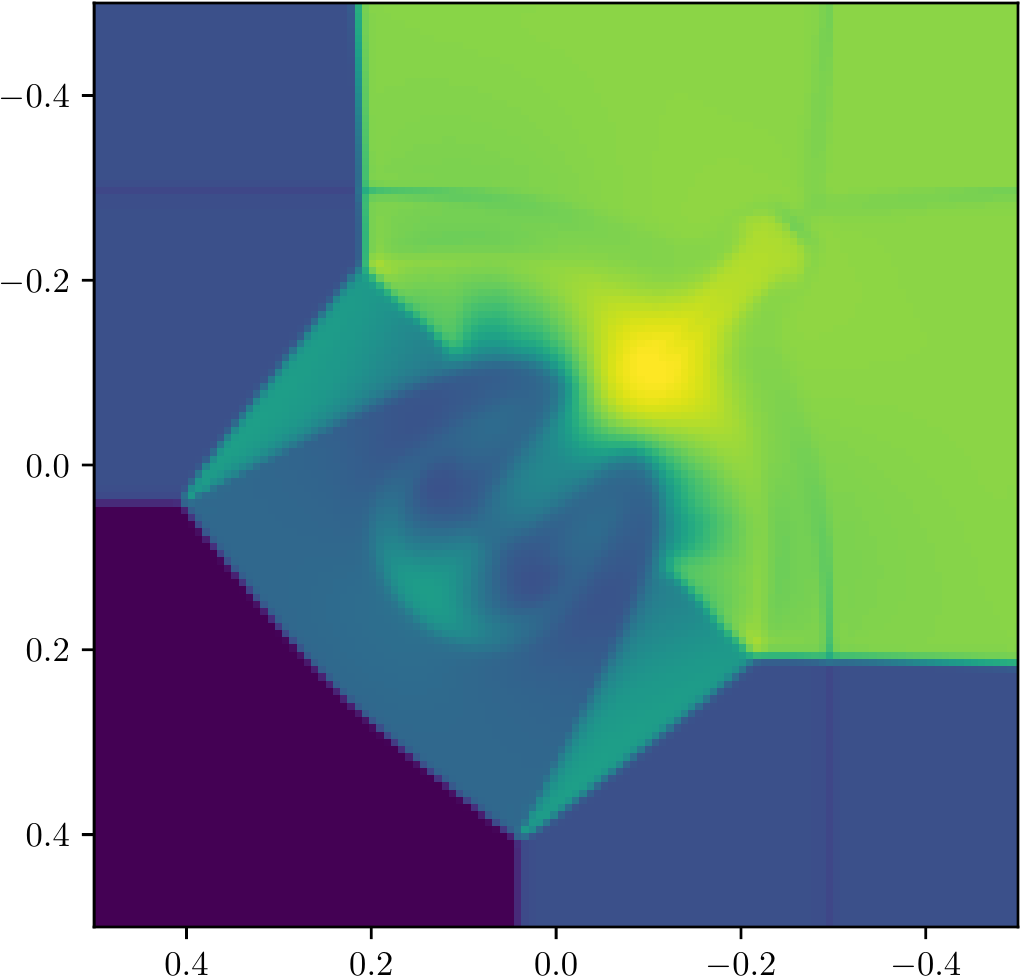}
		\\
		Reference result & HLLC method (second-order)
		\\
		\includegraphics[width=0.49\linewidth]{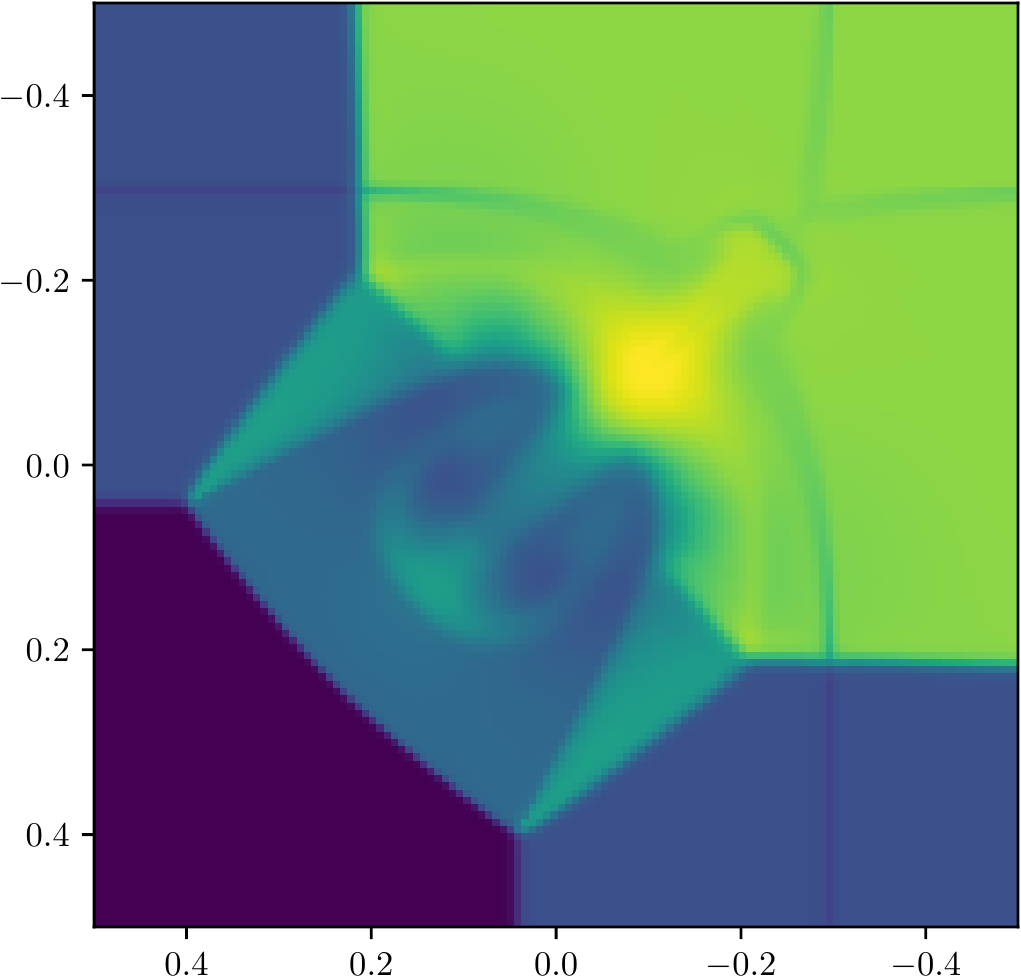}
		&
		\includegraphics[width=0.49\linewidth]{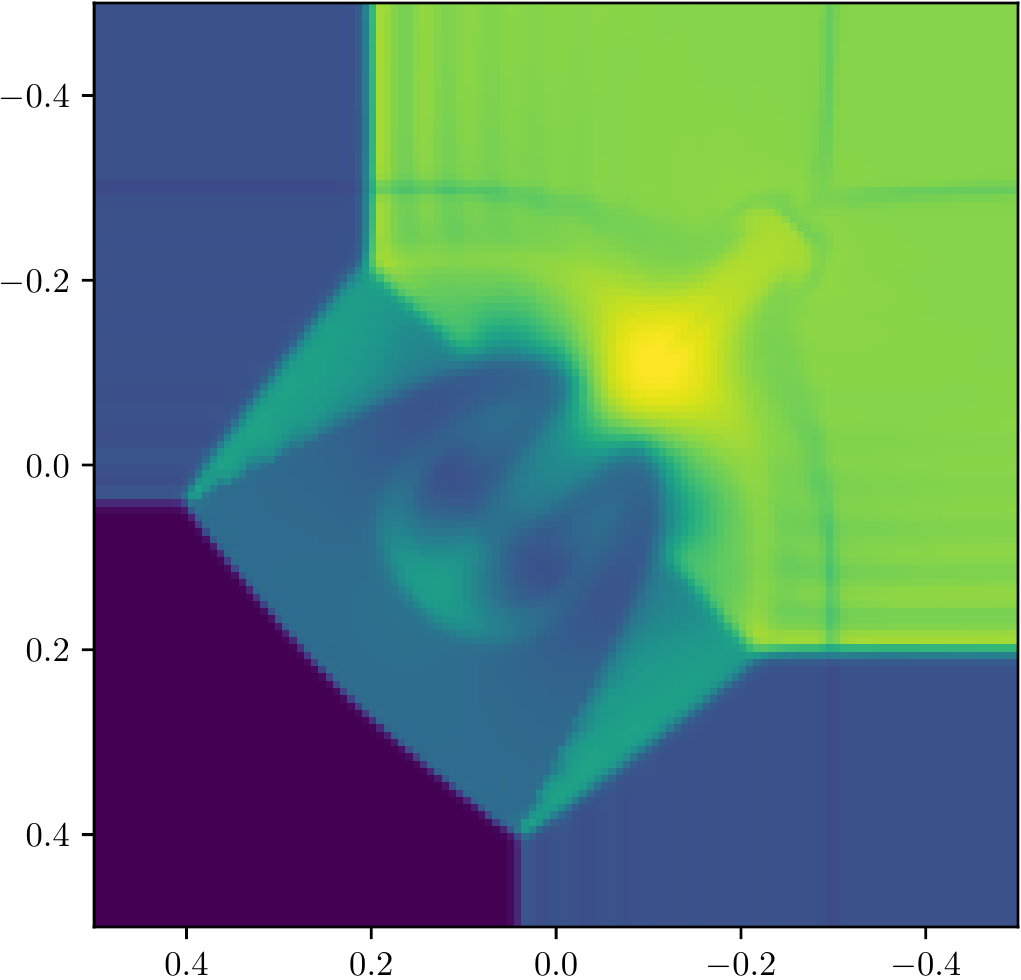}
		\\
		FSLP (second-order without low Mach correction)  &
		FSLP (second-order with low Mach correction)
		\\
		\multicolumn{2}{c}{
		\includegraphics[angle=-90,width=0.5\linewidth]{colorbar_2DRP.png}
		}
	\end{tabular}
\caption{2D Riemann problem. Mapping of the density number as $t=0.8s$.
	The reference simulation is obtained with a second-order HLLC method on a $384\times384$-cell mesh. The other simulations are performed on a $128\times128$-cell grid with the second-order HLLC method (top right), the FSLP second-order method without low-Mach correction (bottom left), the second-order FSLP method with low-Mach correction (bottom right).
}
	\label{2DRP density second-order}
\end{figure}

Figure~\ref{2DRP density} shows a mapping of the density obtained with the OSLP (first-order), the FSLP (first and second-order), and the HLLC (first and second-order) schemes using a $128\times128$-cell mesh. One can see that the overall wave pattern
is rendered successfully by all numerical schemes. The results of the FSLP, OSLP and HLLC schemes for first-order methods are similar. Second-order
methods all better succeed in capturing the shape of the jet as depicted in figure~\ref{2DRP density second-order}.
\hlb{
Although the HLLC scheme poorly performs in the low Mach regime on a coarse grid, this defect vanishes when one refines the grid \cite{Dellacherie2010}.
Therefore, for the present test, we use a simulation performed with the HLLC solver on a 400x400 Cartesian grid as reference solution.
The objective is here to attest that comparable accuracy can be obtained with the FSLP solver on a coarser grid.}
Nevertheless, we can note that spurious oscillations appear in the simulation performed with the second-order FSLP scheme with low-Mach correction. These spurious waves propagate along the $x$ and $y$ axes in the top right part of the domain. We believe they are caused by the lack of numerical dissipation around the low-Mach shocks due to the combination of the low-Mach correction and the second-order reconstruction.
A more careful choice of $\theta$ than \eqref{choice theta} is required to ensure the discrete entropy inequality (see \eqref{Co split}).
 Improving the second-order discretization for the FSLP scheme would, for example,
  require proposing a better choice for $\theta$ but such a task is beyond the scope of the 
  present work. 

\subsection{Hydrostatic equilibrium test}
In order to challenge the well-balanced ability of the FSLP scheme, we consider the atmosphere at rest test (see, for example \cite{padioleau2019high}).
It involves a fluid column of a perfect gas in a
rectangular $[0,2]\times[0,1]$ domain. For this test, the gravity acceleration is set to $g=-1$ so that $\phi(x,y) = -y$. For the EOS of the fluid, we set $\gamma = 5/3$ and $c_v = 1$, where $c_v$ is the heat capacity at constant volume so that the temperature $T$ of the gas  is given by $e = c_v T$.
We consider periodic boundary conditions for the left and right sides of the domain. At the top and bottom of the domain, wall boundaries are imposed for the normal velocity, while the temperature is linearly extrapolated.
The initial condition is built by imposing a linear temperature profile as follows
\begin{subequations}
	\begin{align}
T(x,y=0,t=0) &=  3.78565
,&
\grad T(x,y,t=0) &= (0, -1.2)^T
,\\
\rho(x,y=0,t=0) &= 1
,&
\grad ( c_v (\gamma-1) \rho T)(x,y,t=0) &= (0,\rho g )^T.
\end{align}
\end{subequations}

The computational domain is discretized over a $100\times 50$ on which we let the solver evolve the profile for $t\in[0,100s]$.
Table~\ref{hydrostatic equilibrium velocity measure} displays the value of the $\max_{i,j}|v_{i,j}^n|$ at $t=100 s$ and
shows that both the OSLP and the FSLP first-order methods preserve the velocity magnitude at zero-machine precision.

\begin{table}[h]
	\centering
	\caption{Hydrostatic equilibrium test. Measure of the velocity magnitude at $t=100s$}
	\label{hydrostatic equilibrium velocity measure}
\begin{tabular}{ |c|c|c| }
 \hline
Solver &OSLP & FSLP\\
\hline
Average speed &$1.342\times 10^{-14}$ & $2.056\times 10^{-14}$\\
\hline
\end{tabular}
\end{table}

 It is important to mention that a direct second-order extension of the well-balanced method, as presented in section~\ref{MUSCL} will fail to preserve the hydrostatic equilibrium. This question of designing a well-balanced high-order method has been successfully investigated in the literature \cite{Castro2017, Luna2020,Grosso2021}. Adapting these techniques to the FSLP scheme is beyond the scope of this paper.

\subsection{Rayleigh-Taylor instability}\label{section: RT}
We now consider the Rayleigh-Taylor test performed in \cite{padioleau2019high}: the computational domain is  $[-1/4, 1/4]\times[-3/4, 3/4]$ and the fluid is a perfect gas with $\gamma = 5/3$. At $t=0$
a dense layer of fluid lies on top of a lighter layer so that the configuration is unstable. The gravity acceleration is $g=-0.1$ thus $\phi(x,y) = - 0.1\times y$. The initial conditions are given by

\begin{subequations}
\begin{align}
		\rho(x, y,t=0) &=\begin{cases}
			1 & \text { for  \(y<0\),} \\
			2 & \text { for \(y\geq 0\),}
		\end{cases}
	\\
		p(x, y, t=0) &=  -\rho\phi,\\
		(u,v)(x, y ,t=0) &=
			\left(
			0,\frac{C}{4}\left(1+\cos \left(4 \pi x \right)\right)
			\left(1+\cos \left(3 \pi y\right)\right)
			\right)
			 .
			 \label{RT initial velocity}
	\end{align}
\end{subequations}
The initial velocity~\eqref{RT initial velocity} imposes a single-mode perturbation
of magnitude $C = 0.01$ that will break the hydrostatic equilibrium.
\begin{figure}
	\centering
	\includegraphics[width=.3\linewidth]{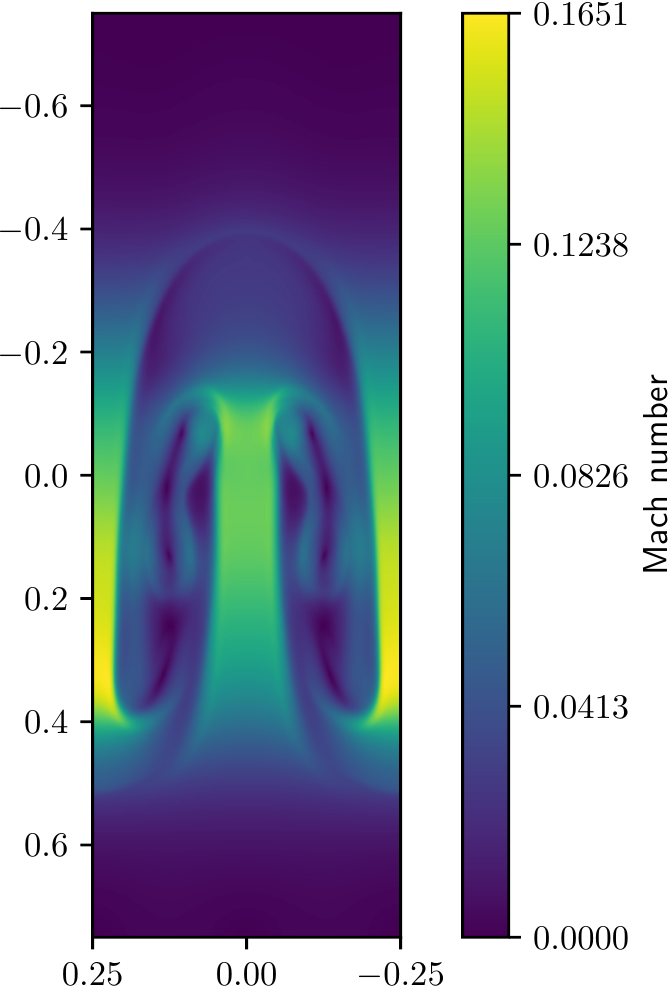}
	\caption{Rayleigh-Taylor instability: mapping of the
		Mach number profile for reference solution obtained with a second-order HLLC method
		on a $200\times \hlb{600}$-cell mesh at $t=12.4s$.}
	\label{RT Mach}
\end{figure}%

\begin{figure}
	\centering
	\begin{tabular}{cccc}
		\includegraphics[width=.225\linewidth]{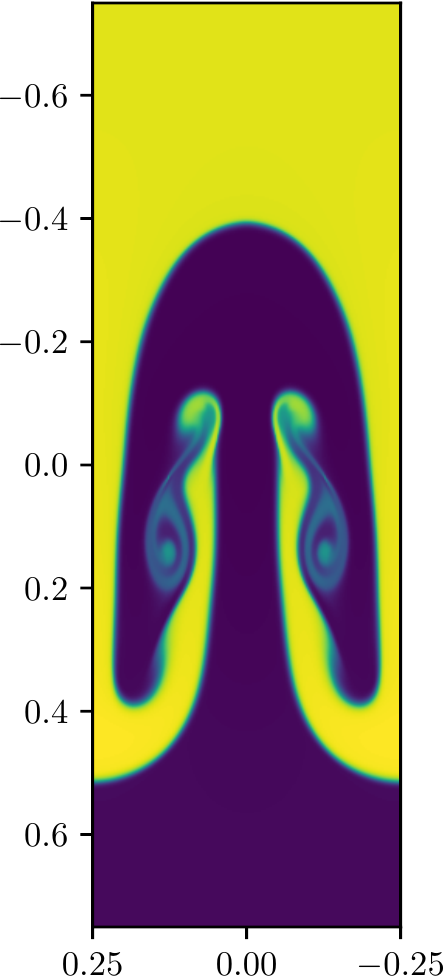}
		&
		\includegraphics[width=.225\linewidth]{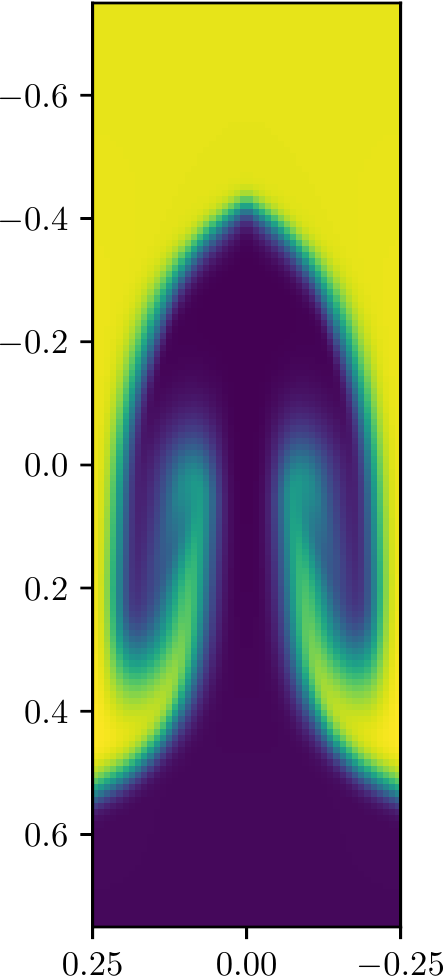}
		&
		\includegraphics[width=.225\linewidth]{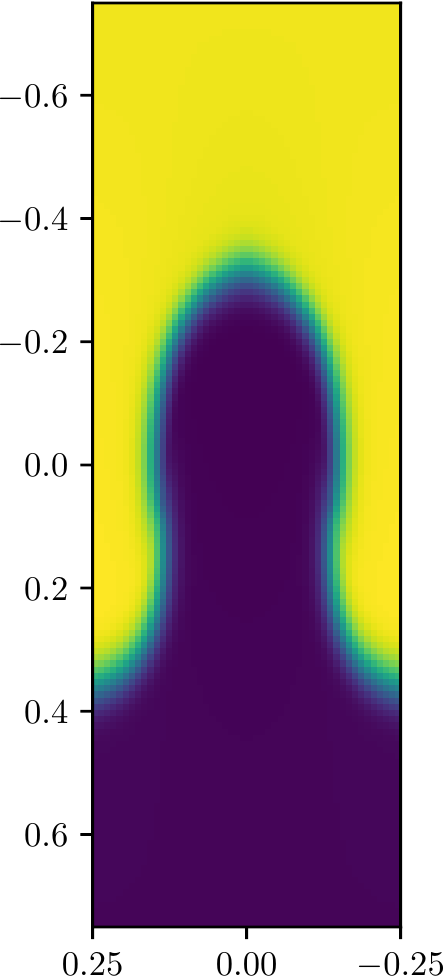}
		&
		\includegraphics[width=.225\linewidth]{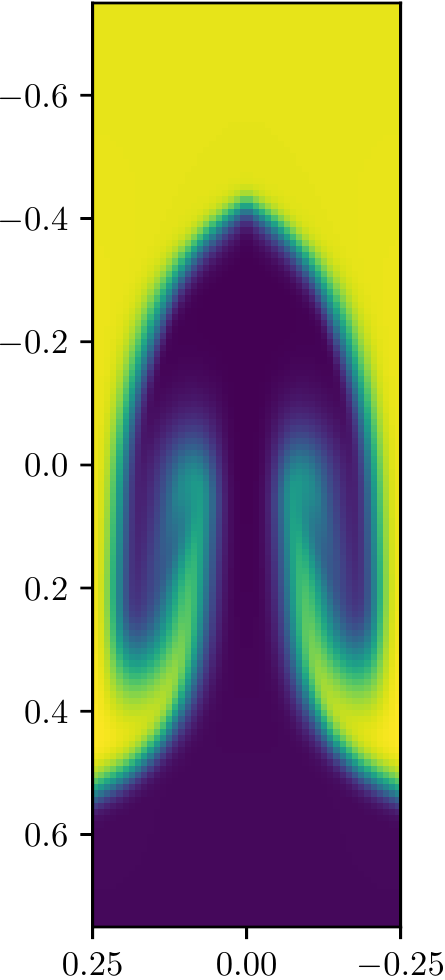}
		\\
		reference result
		&
		FSLP (first-order)
		&
		HLLC (first-order)
		&
		OSLP (first-order)
		\\
		\\
		\includegraphics[width=.225\linewidth]{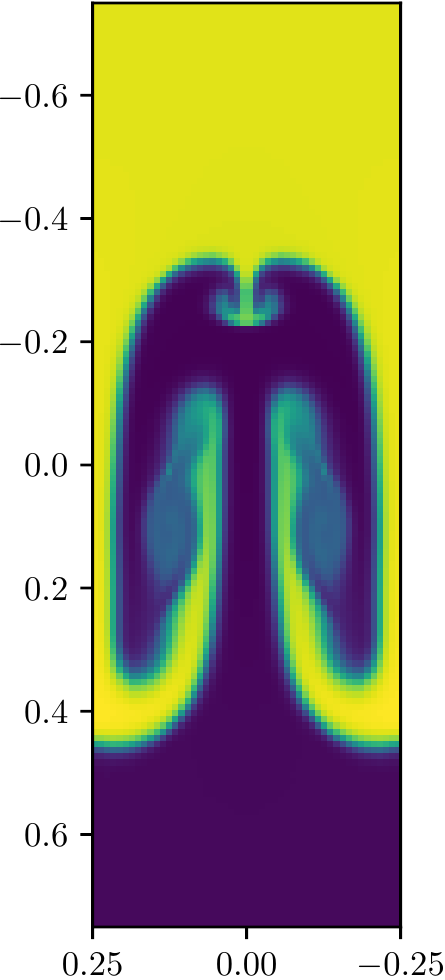}
		&
		\includegraphics[width=.225\linewidth]{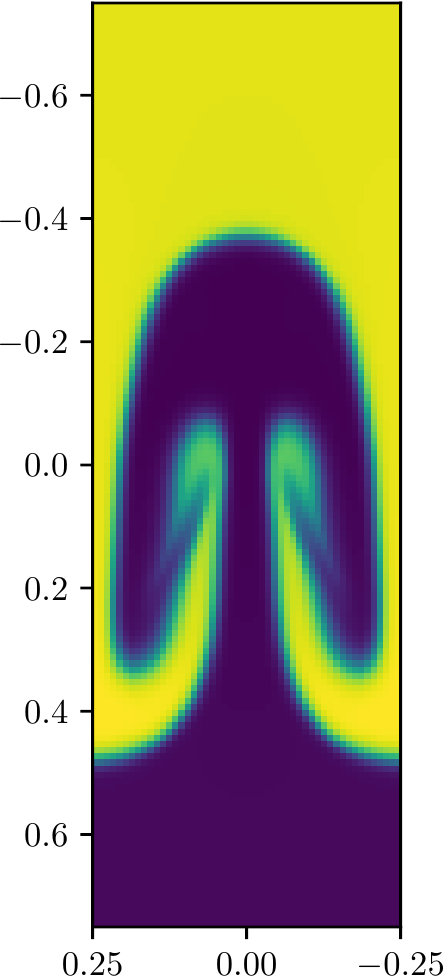}
		&
		\includegraphics[width=0.1\linewidth]{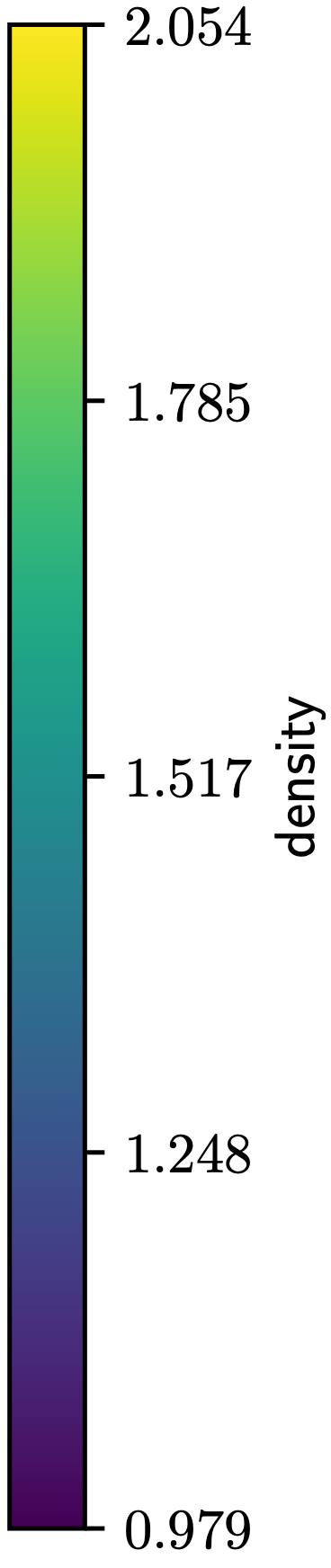}
		&
		\\
		FSLP (second-order)
		&
		HLLC (second-order)
		&&
	\end{tabular}
   \caption{Rayleigh-Taylor instability: mapping of the
	density profile for:
	the reference results obtained with the second-order HLLC method on a
	$200\times 600$-cell grid,
	the FSLP (first and second-order), the HLLC (first and second-order), and the OSLP methods
	on a $50\times 150$-cell mesh at $t=12.4s$.}
\label{RT density}
\end{figure}

This test allows measuring and comparing the effect of the numerical diffusion of each method as it tends to limit the development of high-frequency modes in the instability. Figure~\ref{RT density} and \ref{RT Mach} respectively show the density and Mach number of a reference second-order HLLC simulation obtained with a $200\times600$-cell mesh. We observe a sharp transition between both fluid layers, and the interface presents lateral arms with secondary rolls.

Figure~\ref{RT density} shows simulations ran with both the FSLP solver and the HLLC solver
on a coarse $50\times150$-cell mesh obtained with first and second-order methods.
The HLLC method presents an important amount of numerical diffusion: it only shows a single mode growth, and no lateral arm is created. On the other hand, The FSLP method with low-Mach correction can produce the arms that appear on the reference HLLC simulation. It shows that our new method can better capture high-frequency flow features with much lower resolution than the classic HLLC solver, similar to OSLP.
This is due to the low-Mach nature of this test: as displayed in figure \ref{RT Mach} one can indeed see that
$Ma\in[0,0.165]$. Therefore the low-Mach correction at play in the FSLP solver has an important effect on the result. Note, however how this correction does not fix the important amount of numerical diffusion that appears at the interface between both layers with the FSLP solver. At second-order, the HLLC solution
shown in figure~\ref{RT density} does present lateral arms, similar to the first-order FSLP method. The second-order FSLP method presents many secondary rolls both on the front of the main mode and on the lateral arms.
This agreement with the reference solution displayed in figure~\ref{RT density} shows the higher accuracy of the second-order FSLP method. Finally, let us mention that the results obtained with the OSLP in figure~\ref{RT density} resemble the first-order FSLP simulation of figure~\ref{RT density}.

\subsection{The stationary vortex in a gravitational field}\label{section: vortex with gravity}

The stationary vortex in a gravity field test \cite{thomann2020all} is a modified version of the Gresho vortex \cite{Gresho1990} where a gravitational field and a background hydrostatic equilibrium state are added. It allows testing the low-Mach properties of numerical methods. We consider the sub-case of the setup proposed in \cite{thomann2020all} with $\froude=\mach, RT=1/\mach$, and an adiabatic index $\gamma=5/3$. The potential and the initial conditions are given by:

\begin{equation*}
\Phi(r)=\left\{\begin{array}{lll}
12.5 r^2 & \text { if } & r \leq 0.2 \\
0.5-\ln (0.2)+\ln (r) & \text { if } & 0.2<r \leq 0.4 \\
\ln (2)-0.5 \frac{r_c}{r_c-0.4}+2.5 \frac{r_c}{r_c-0.4} r-1.25 \frac{1}{r_c-0.4} r^2 & \text { if } & 0.4<r \leq r_c \\
\ln (2)-0.5 \frac{r_c}{r_c-0.4}+1.25 \frac{r_c^2}{r_c-0.4} & \text { if } & r>r_c,
\end{array} \right.
\end{equation*}
 with $r_c=0.5$. The density is given by:\begin{equation}
\rho=\exp \left(-Ma^2 \Phi\right),
\end{equation}
The radial velocity is null, and the tangential velocity is given by

\begin{equation}
u_\theta(r)=\frac{1}{u_r}\left\{\begin{array}{lll}
5 r & \text { if } & r \leq 0.2 \\
2-5 r & \text { if } & 0.2<r \leq 0.4 \\
0 & \text { if } & r>0.4
\end{array}\right.
\end{equation}

The pressure is $p=\rho/Ma^2 + p_2$ with:

\begin{equation}
p_2(r)=\frac{1}{u_r^2}\left\{\begin{array}{lll}
p_{21}(r) & \text { if } & r \leq 0.2 \\
p_{21}(0.2)+p_{22}(r) & \text { if } & 0.2<r \leq 0.4 \\
p_{21}(0.2)+p_{22}(0.4) & \text { if } & r>0.4
\end{array}\right.
\end{equation}
where $u_r = 0.4 \pi$ and

\begin{equation}
\begin{aligned}
p_{21}(r)=&\left(1-\exp \left(-12.5 Ma^2 r^2\right)\right) \\
p_{22}(r)=& \frac{1}{\left(1-Ma^2\right)\left(1-0.5 Ma^2\right)} \exp \left((-0.5+\ln (0.2)) Ma^2\right) \\
&\left(r^{-Ma^2}\left(Ma^4(r(10-12.5 r)-2)-4 Ma^4 (\gamma-1)^2+ Ma^2(r(12.5 r-20)+6)\right)\right.\\
&\left.+\exp \left(-\ln (0.2) Ma^2\right)\left(4 -2.5 Ma^2+0.5 Ma^4\right)\right).
\end{aligned}
\end{equation}

We consider the domain $[0,1]^2$ and define the radius from the center $r=(x-0.5)^2+(y-0.5)^2$. The initial Mach number distribution is shown for two configurations corresponding to $\mach=10^{-2}$ and $\mach=10^{-3}$ in figure \ref{greshogravfig3}. We let the vortex evolve until $t=1$ s corresponds to a full revolution and display the final Mach number distribution with different resolutions in figures \ref{greshofig1}, \ref{greshogravfig2}. We also give the final to initial kinetic energy ratio in table \ref{table: greshograv kinetic energy}. It is clear from the figures and the table that the numerical diffusion is indeed roughly independent of the Mach regime.
\begin{figure}
    \centering
\begin{tabular}{cc}
	\includegraphics[width=0.5\linewidth]{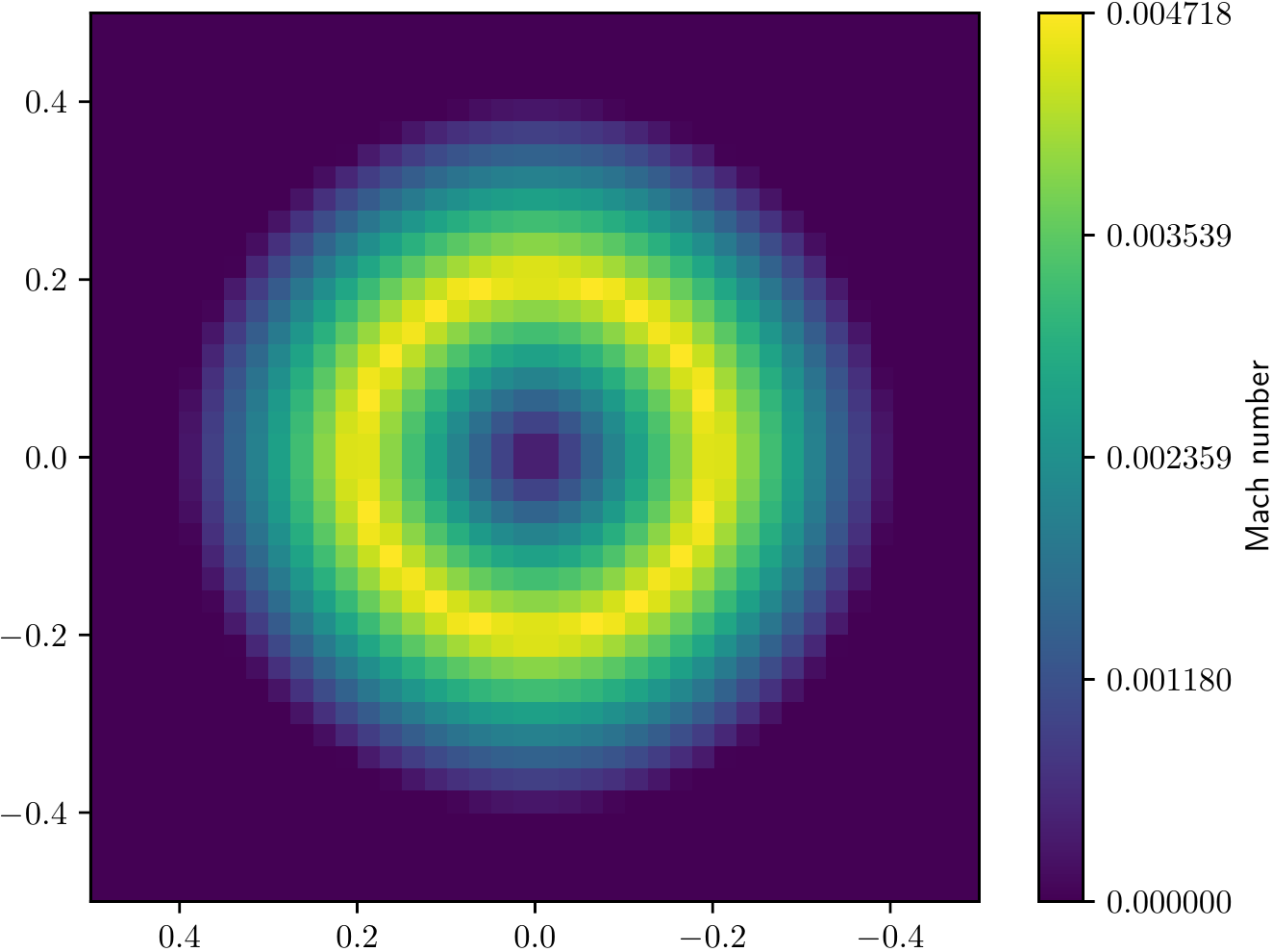}
	&
	\includegraphics[width=0.5\linewidth]{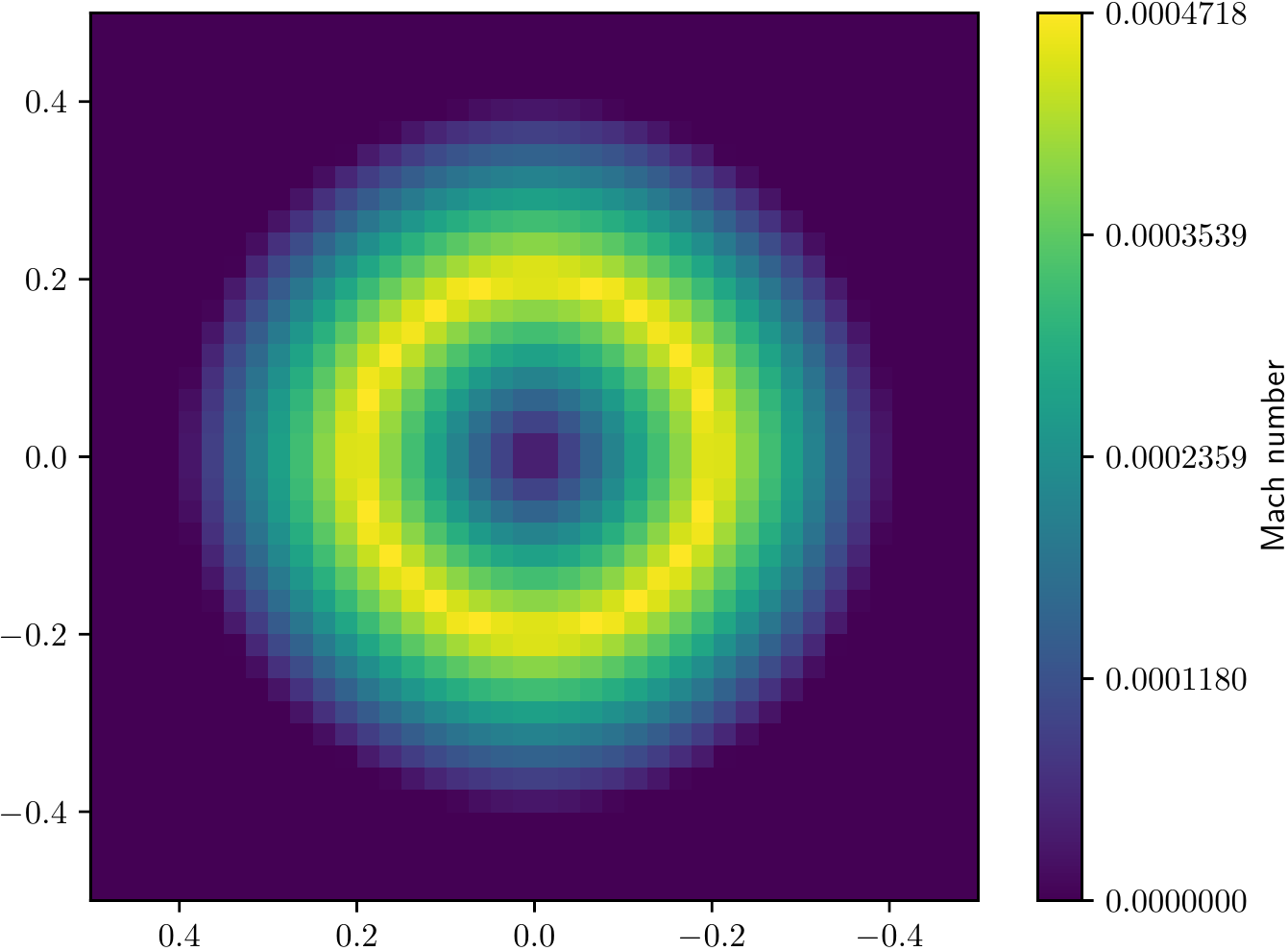}
	\\
	$Ma=10^{-2}$ &$Ma=10^{-3}$
\end{tabular}
    \caption{Comparison of the initial Mach number distribution  for the Gresho vortex test case with $Ma=10^{-2}$ and $Ma=10^{-3}$ obtained with the FSLP method with resolutions $40^2$.}
    \label{greshogravfig3}
\end{figure}

\begin{figure}
    \centering
\begin{tabular}{ccc}
	\includegraphics[width=0.3\linewidth]{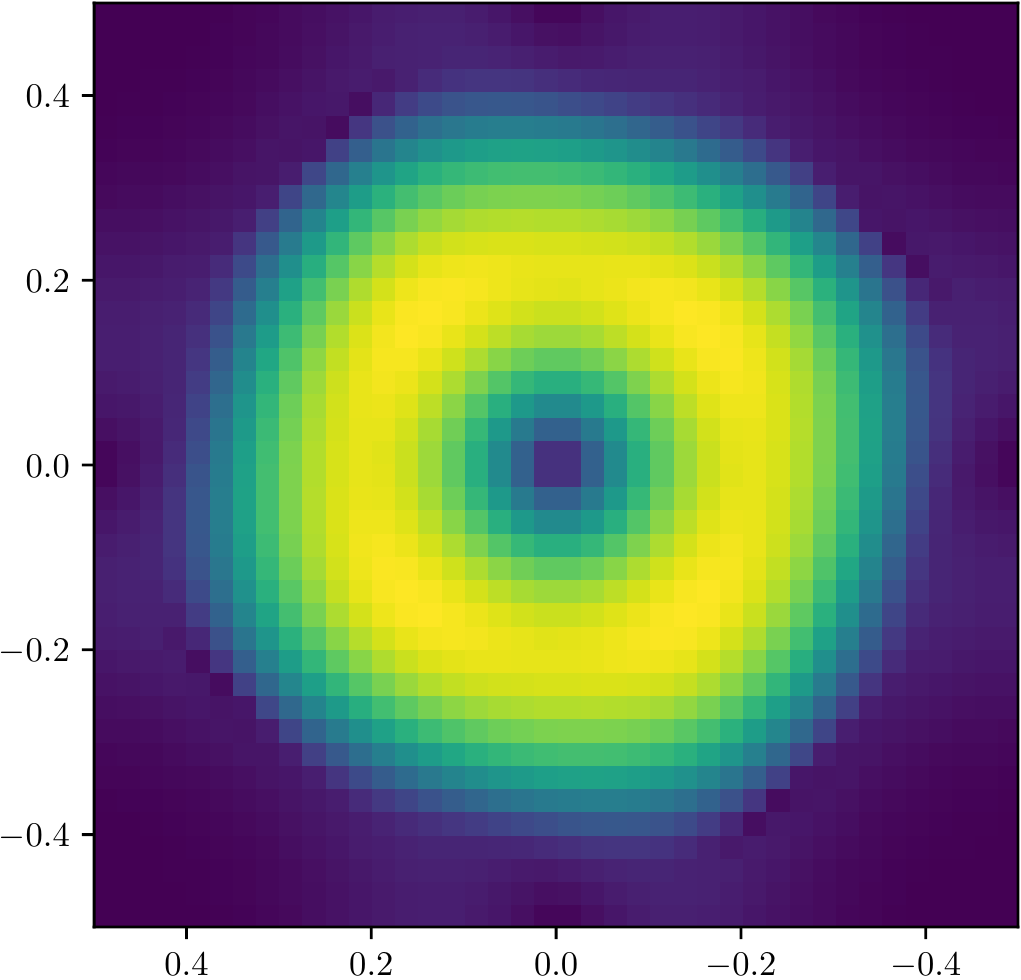}
	&
	\includegraphics[width=0.3\linewidth]{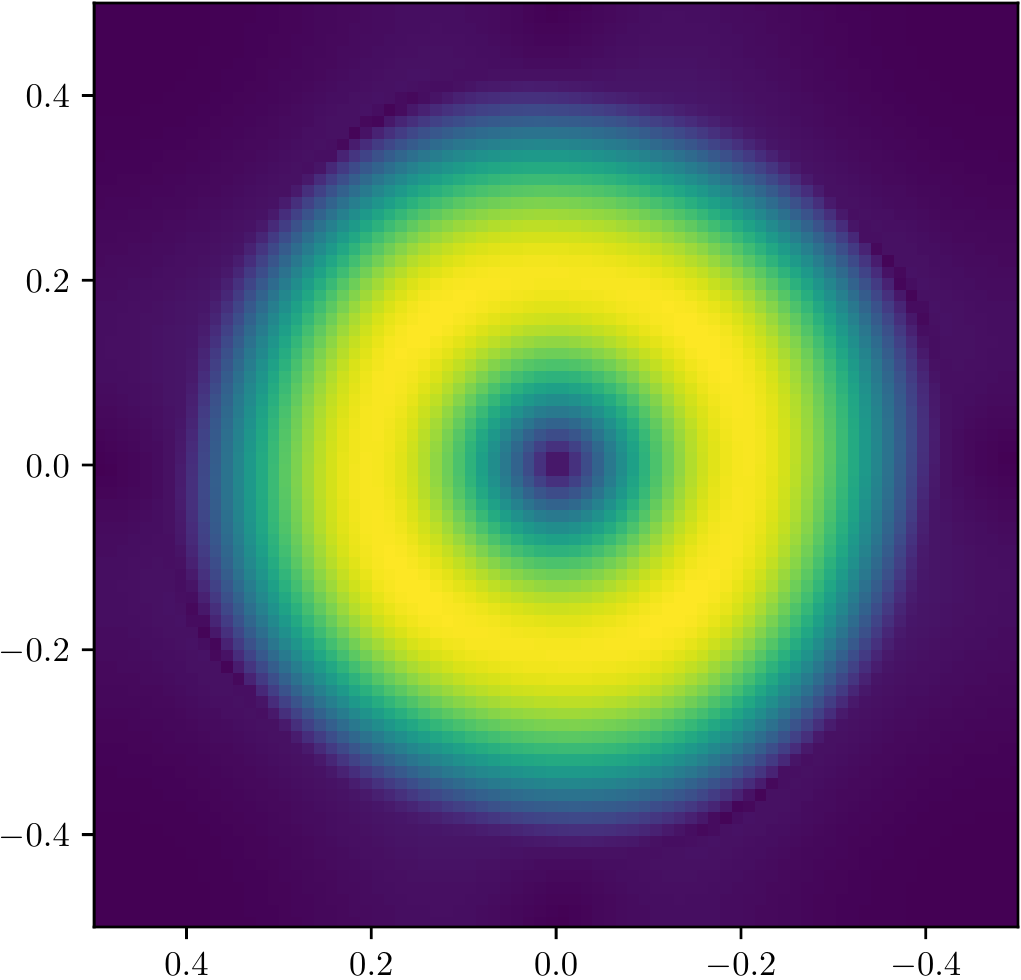}
	&
	\includegraphics[width=0.3\linewidth]{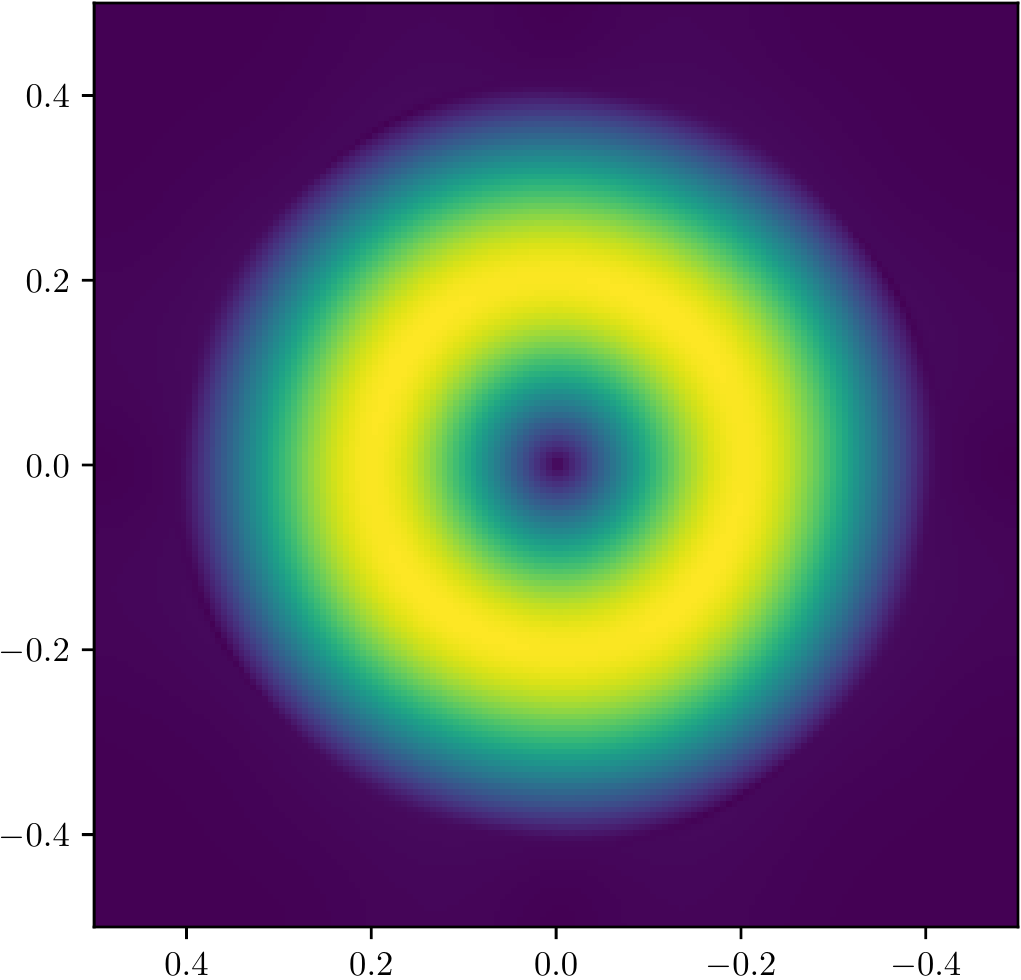}
	\\
	$40^2$ & $80^2$ & $160^2$
	\\
\multicolumn{3}{c}{
	\includegraphics[angle=-90,width=0.5\linewidth]{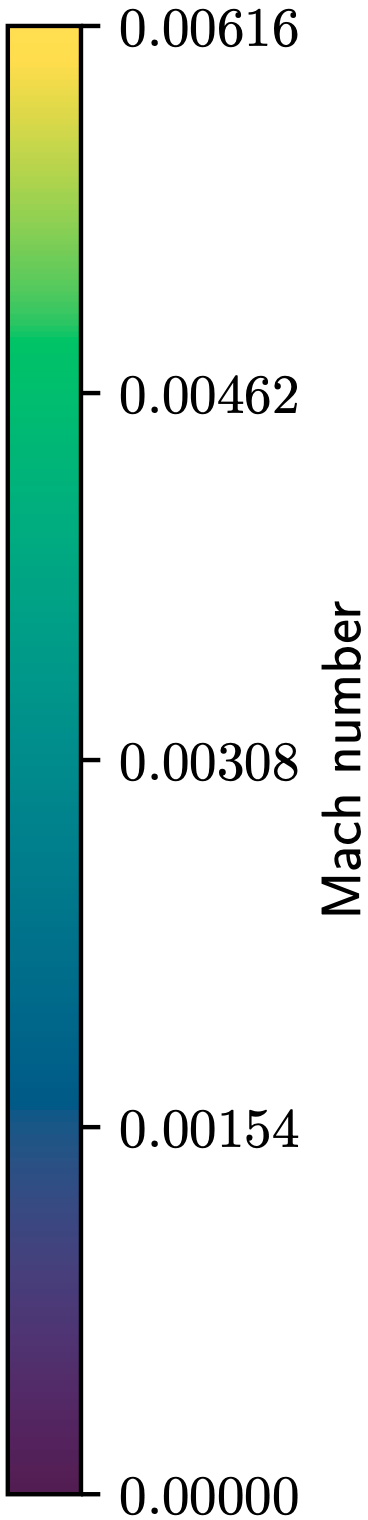}
}
\end{tabular}
    \caption{Comparison of the Mach number distribution for the Gresho vortex test case with $Ma=10^{-2}$ obtained with the FSLP method with resolutions $40^2, 80^2 \ 160^2$ at $t=1s$.}
    \label{greshogravfig1}
\end{figure}

\begin{figure}
    \centering
\begin{tabular}{ccc}
	\includegraphics[width=0.3\linewidth]{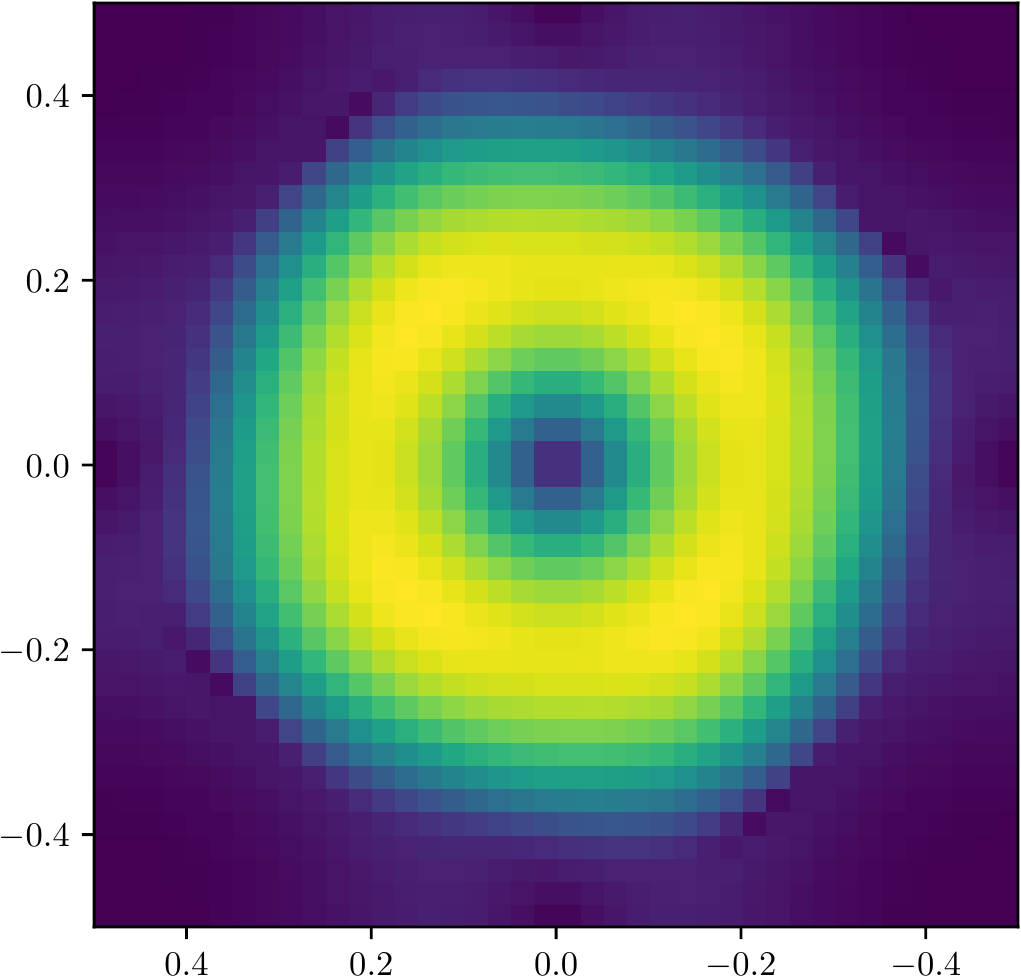}
	&
	\includegraphics[width=0.3\linewidth]{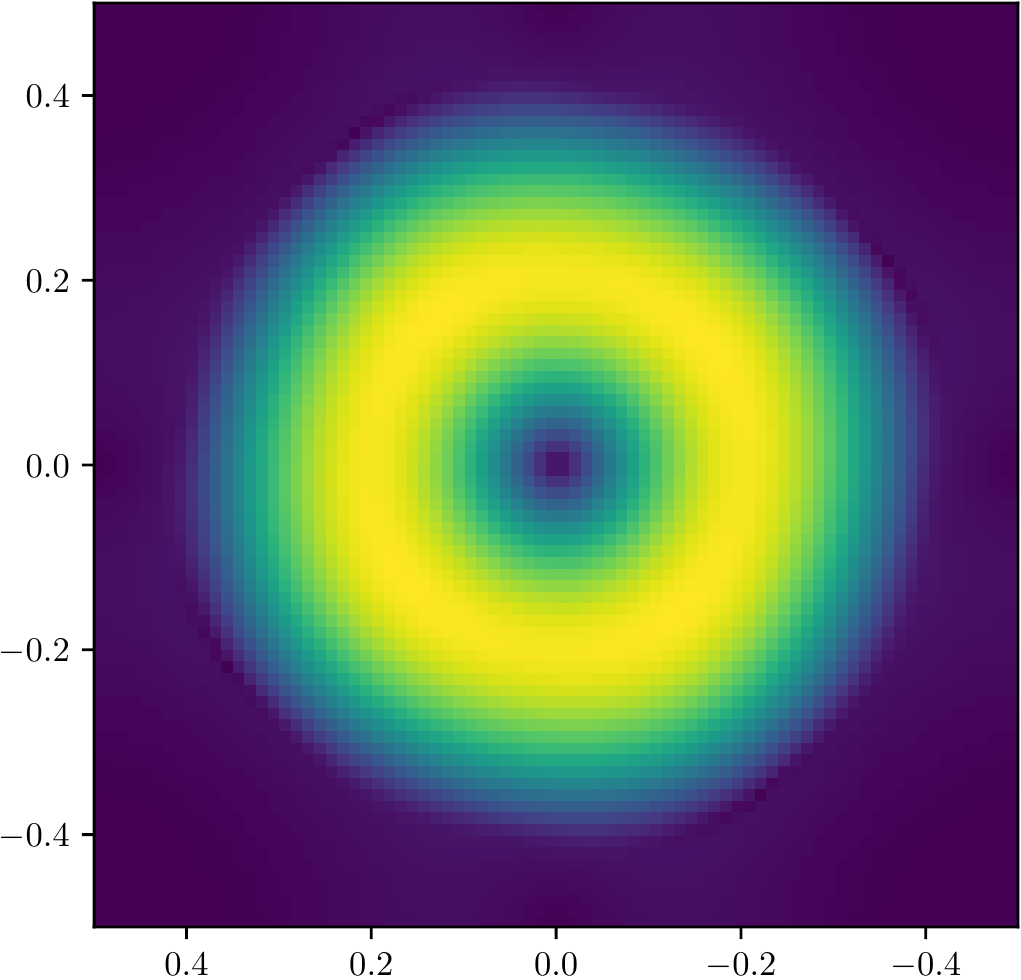}
	&
	\includegraphics[width=0.3\linewidth]{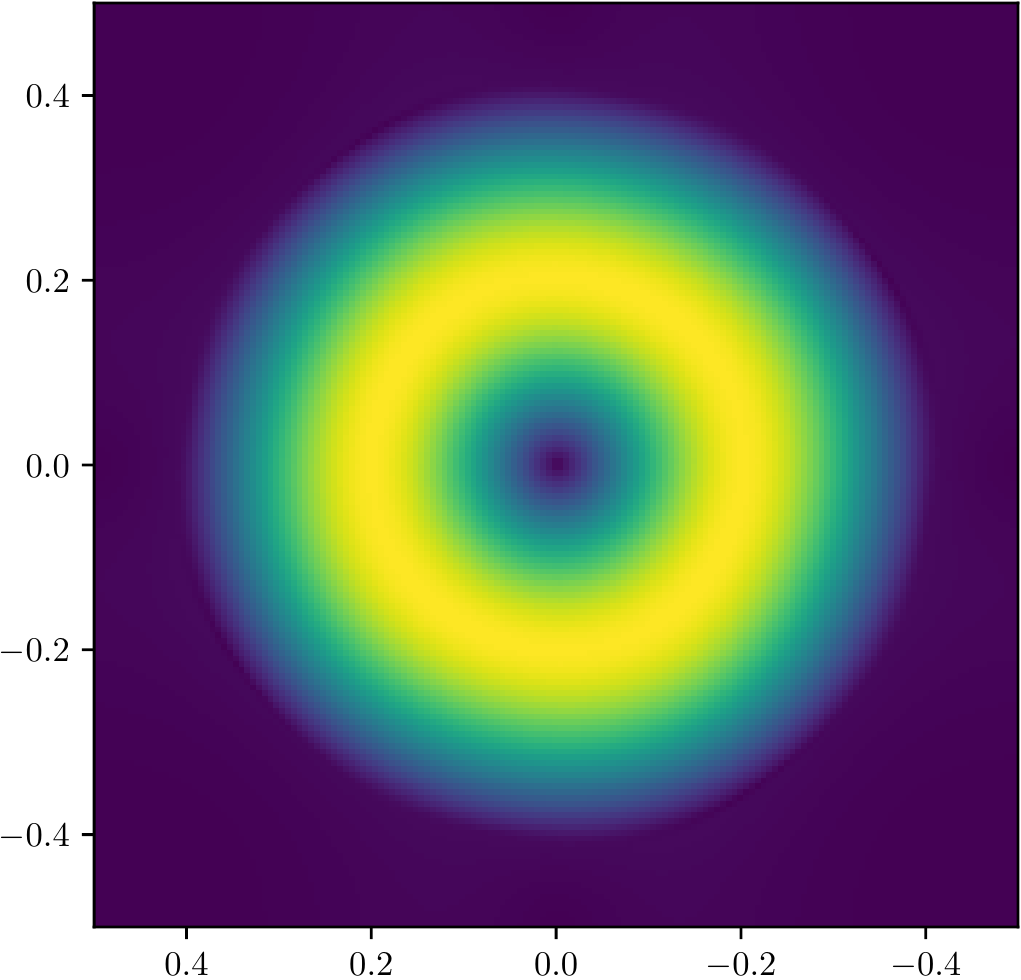}
	\\
	$40^2$ & $80^2$ & $160^2$
	\\
\multicolumn{3}{c}{
	\includegraphics[angle=-90,width=0.5\linewidth]{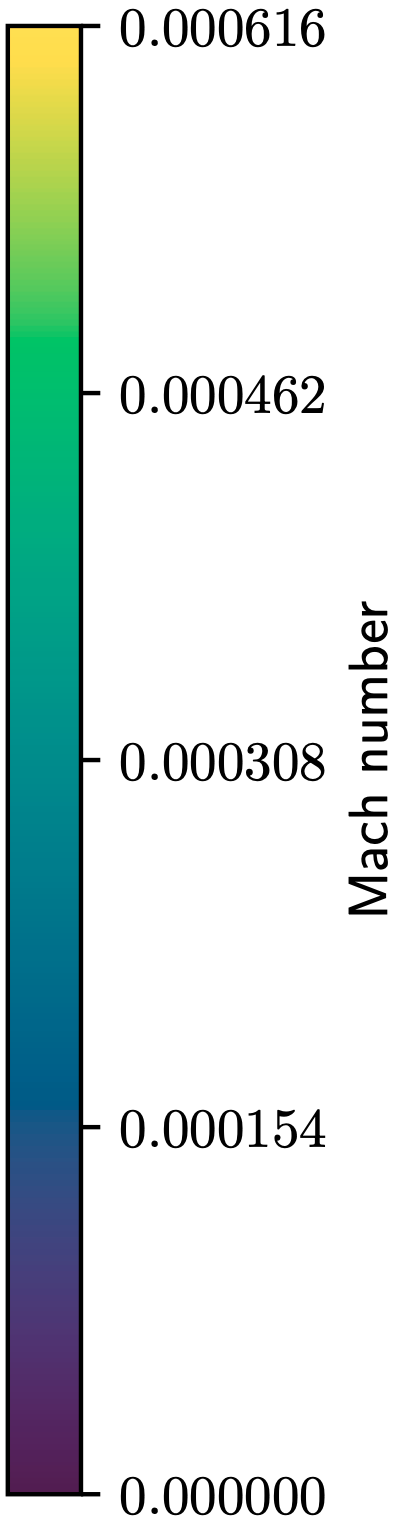}
}
\end{tabular}
    \caption{Comparison of the Mach number distribution for the Gresho vortex test case with $Ma=10^{-3}$ obtained with the FSLP method with resolutions $40^2, 80^2 \ 160^2$ at $t=1s$.}
    \label{greshogravfig2}
\end{figure}

\begin{table}[h]
\caption{Vortex in a gravitational potential test case: evaluation of the ratio kinetic energies $e_{kin}/e_{kin}^0$ at $t=1s$ in the computational domain for different values of the Mach number $Ma$.}
\label{table: greshograv kinetic energy}
\centering
\begin{tabular}{ |c|c|c| }
 \hline
& $Ma=10^{-2}$ & $Ma=10^{-3}$ \\
\hline
$40^2$ & 0.5723 & 0.5727  \\
\hline
$80^2$  & 0.7261 & 0.7258 \\
\hline
$160^2$ & 0.8386 & 0.8388  \\
\hline
\end{tabular}
\end{table}

\subsection{\hlo{Performance comparison: OSLP vs. FSLP}}\label{sect:perf}
\hlo{In this section, we compare the performances of both OSLP and FSLP methods. 
The tests were run on a single Nvidia K80 GPU on a $(512,384,256)$ grid to load the chip's memory fully.
As discussed in section \ref*{sect:cvx}, the relative performances of both methods may vary as 
a function of the Mach number. Indeed, the time step for the FSLP method follows $\Dt\simeq \Dx/(v+c)$ while the OSLP time step 
follows $\Dt=\Dx/max(v,c)$. If $v\simeq c$, the OSLP time step is about two times larger than the FSLP method. However, 
if $c>>v$ (low-Mach regime), both time step coincides. On the other hand, we can expect that a single step of the FSLP method
should be faster than a single step of the OSLP method, as it involves only one kernel instead of two. To illustrate this behavior,
we document two test cases:}
\begin{enumerate}
 \item \hlo{A 3D sod shock tube, to illustrate the $Ma\simeq 1$ behavior,}
 \item \hlo{A 3D gresho vortex, to illustrate the $Ma\simeq 0$ behavior. }
\end{enumerate}
\hlo{Let us investigate the time required for 
both methods to reach a given physical time in both Mach regimes. Table \ref*{table:perf} displays performance results. First,
we note that the OSLP method requires $50\%$ more memory than the FSLP method, as it needs to store the intermediate acoustic states on top 
of the two arrays storing the solution. This allows the FSLP to simulate on a finer grid than the OSLP method given a fixed amount of memory allocted for the computation. 
We also note that one step of the OSLP method requires about $30\%$ more time than the FSLP method, as 
it requires two kernels to be applied successively. Since the Gresho test case is a low-Mach test case, the time step sizes of the OSLP and FSLP methods coincide. As a result,
the FSLP method is $30\%$ faster than the FSLP method. On the other hand, for the Sod problem, the FSLP method requires 117 steps to reach the end time, while the OSLP method only needs 67 steps. As a result, reaching the final time with the FSLP method is $27\%$ longer than with the OSLP method,
despite the FSLP steps being faster. These results give a good idea of the relative efficiency of both methods, but they must be 
mitigated as they are heavily dependent on the implementation and the architecture used. Also, since the FSLP method opens the possiblity for a simple 
2nd order extension, we believe it is still of interest even in the $v\simeq c$ regime. Finally, an implicit-explicit version of 
the FSLP method would likely be competitve with OSLP, as the CFL conditions would coincide. We plan to explore this option in our future work.}

\begin{table}[htbp]
 \centering
 \caption{\hlo{Performance comparison between OSLP and FSLP}} 
 \label{tab:simulation-results}
 \hlo{\begin{tabular}{cccccc}
 \hline
 \textbf{Problem} & \textbf{Method} & \textbf{Steps} & \textbf{Step duration (s)} & \textbf{Total Time (s)} & \textbf{Memory required (MiB)}\\
 \hline
 Gresho & FSLP & 267 & 0.27 (1.0) & 78.2 ( 1.0) & $7.216 \ 10^3$ (1.0) \\
 Gresho & OSLP & 267 & 0.35 (1.31)& 102.2 (1.3) & $1.08 \ 10^4$ (1.5)\\
 \hline
 Sod & FSLP & 117(1.0) & 0.27 (1.0) & 34.9 (1.0) & $7.216 \ 10^3$ (1.0)\\
 Sod & OSLP & 67 (0.57) & 0.35 (1.3) & 25.48 (0.73) & $1.08 \ 10^4$ (1.5) \\
 \hline
 \end{tabular}}
 \label{table:perf}
\end{table}
	
\section*{Reproducing the numerical experiments and figures}
All the simulations shown in this paper were performed with the open source code ARK$^2$-MHD, which  can be found at \url{https://gitlab.erc-atmo.eu/remi.bourgeois/ark-2-mhd/-/tree/test_case_unsplit_paper_%232}. All parameter files and plotting scripts can be found in the folder \url{/test_case_unsplit_paper}.
\section{Conclusion}
We have presented the recasting of an operator splitting Lagrange-Projection solver for gas dynamicss into a corresponding flux-splitting finite-volume method.
This FSLP method is obtained thanks to a simple modification: it only differs from the OSLP method in the states used to compute the transport step.
The method relies on a flux evaluation that separates pressure-related terms from the advection terms in the spirit of \cite{zha1993,deshpande_pvu_1994,Toro2012,borah_novel_2016}.
Two different interpretations of this flux-splitting scheme were proposed to understand better and analyze the resulting method.
First, we showed that the FSLP discretization could be written as a convex combination of two
updated states resulting from approximating two subsystems that respectively account for pressure and advection effects. This approach allowed us to derive the stability properties of the proposed algorithm. Second, we discussed the interpretation of the FSLP method as the result of the discretization of a larger relaxation system that accounts separately for pressure and advection terms within a single step. We showed that the FSLP method is more computationally efficient than the OSLP method \hlo{in the low-Mach regime}.
As a flux-based solver, the resulting FSLP method was straightforwardly extended to multiple dimensions of space and to a high order of accuracy thanks to a standard MUSCL method.

The initial OSLP solver has several interesting numerical advantages:  a well-balanced treatment of the source term and a low Mach fix that provides a uniform truncation error with respect to the Mach number. Both properties were preserved through the recasting process.
The robustness and accuracy of our new flux-splitting method were tested against
a set of benchmark problems, including one and two-dimensional problems, high and low Mach flows
with first and second-order discretizations. The results further confirm the numerical stability of our approach.

In the future, we plan to perform a similar recasting by considering an Implicit-Explicit OSLP solver to prevent the severe CFL limitations imposed by the sound velocity in the low Mach regime. The methods can also be extended to several other flow models like two-phase flow models, magneto-hydrodynamics and the
M1 model for radiative transfer.

\section*{Acknowledgment}
Pascal Tremblin acknowledges support by the European Research Council under Grant Agreement ATMO 757858. Rémi Bourgeois thanks Teddy Pichard for the very fruitful discussions about the convex combination interpretation of the FSLP method.
   	\bibliography{bib}

\appendix



\section{A few classic convexity properties}
We recall hereafter a few classic convexity/concavity properties related to admissible states, entropy, and energy of our flow model that can be found in the literature (see for example \cite{Godlewski2021}). We propose short self-contained proofs of these properties for the sake of completeness.
\begin{lemma}\label{Omega is convex}
	We have the following properties.
\begin{enumerate}[(a)]
	\item
	The function $
	\Lambda:\bU=(\rho,\rho u, \rho E)\in[0,+\infty)\times \mathbb{R}\times [0,+\infty)\mapsto \Lambda(\bU)=(\rho E) - \frac{(\rho u)^2}{2\rho}
	$ is concave.
	\item The set \(\Omega\) defined by \eqref{Def omega} is convex.
	\item The function
	$\specificentropy: (\Tau,u,E)
	\mapsto
	s
	\qty(\Tau,
	E
	-
	 \frac{u^2}{2})
	$ is strictly concave.
	\item The function
	$\entropy:(\rho,\rho\Tau,\rho u,\rho E)
	\mapsto
	\rho s
	\qty(\frac{\rho\Tau}{\rho},
	\frac{(\rho E)}{\rho}
	-
	\frac{(\rho u)^2}{2\rho^2})
	$ is strictly concave.
\end{enumerate}
\end{lemma}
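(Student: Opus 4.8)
All four statements are classical, and the plan is to establish them in the order $(a)\Rightarrow(b)\Rightarrow(c)\Rightarrow(d)$, since each one feeds the next. The analytic inputs I would rely on are: the elementary fact that $(\rho,m)\mapsto m^2/\rho$ is jointly convex on $\{\rho>0\}$ (it is the perspective of the convex map $m\mapsto m^2$; equivalently its $2\times2$ Hessian is positive semidefinite); the consequence of the Weyl assumptions~\eqref{weyl} already recorded in the text, that $(1/\rho,e)\mapsto -s^\EOS(1/\rho,e)$ is strictly convex; the sign relations $\partial_e s^\EOS=1/T>0$ and $\partial_{(1/\rho)}s^\EOS=p/T>0$, read off from the Gibbs relation~\eqref{eq: Gibbs}; and the structural fact that the perspective transform $f(\cdot)\mapsto\rho\,f(\cdot/\rho)$ maps concave functions to concave functions.

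For $(a)$, I would write $\Lambda(\bU)=(\rho E)-\tfrac12\,\tfrac{(\rho u)^2}{\rho}$ as a linear term plus $-\tfrac12$ times the convex map above; the sum of a linear and a concave function is concave (but not strictly so, since $\Lambda$ is affine in the $\rho E$ direction, which is exactly what the statement asserts). For $(b)$, I would note that $\Lambda(\bU)=\rho e$, so $\Omega=\{\rho>0\}\cap\{\Lambda(\bU)>0\}$ is the intersection of an open half-space with a strict superlevel set of the concave function $\Lambda$; both are convex, hence so is $\Omega$.

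For $(c)$, I would write $\specificentropy(\Tau,u,E)=s^\EOS\!\big(\Tau,\,E-\tfrac{u^2}{2}\big)$ as the composition of $(\Tau,u,E)\mapsto(\Tau,\,E-\tfrac{u^2}{2})$, whose two components are concave (the first linear, the second linear in $E$ minus the strictly convex $u^2/2$), with $s^\EOS$, which is concave and nondecreasing in each argument; concavity then follows from the standard vector-composition rule. For the strictness I would run a short case split on two distinct states $(\Tau_i,u_i,E_i)$, $i=0,1$, and a weight $t\in(0,1)$: if the pairs $\big(\Tau_i,\,E_i-\tfrac{u_i^2}{2}\big)$ differ, strict concavity of $s^\EOS$ (applied after the monotonicity step) yields the strict inequality; if they coincide, then necessarily $u_0\neq u_1$, and strict convexity of $u\mapsto u^2/2$ makes the internal energy strictly larger at the interpolated state, so strict monotonicity of $s^\EOS$ in $e$ closes the argument.

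For $(d)$, I would observe that $\rho\Tau/\rho=\Tau$ and $(\rho E)/\rho-(\rho u)^2/(2\rho^2)=E-u^2/2$, whence $\entropy(\rho,\rho\Tau,\rho u,\rho E)=\rho\,\specificentropy\!\big(\tfrac{\rho\Tau}{\rho},\tfrac{\rho u}{\rho},\tfrac{\rho E}{\rho}\big)$ is exactly the perspective transform of the strictly concave function $\specificentropy$ of part $(c)$; concavity of $\entropy$ is then immediate from the perspective fact, and this is precisely what is used in Section~\ref{stability} (where $\rho\Tau\equiv1$ is enforced). The one delicate point, and the main obstacle I would flag, concerns the word ``strictly'' in $(d)$: because $\entropy$ is positively homogeneous of degree one, it is affine along every ray through the origin and so cannot be strictly concave in the literal global sense. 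What I would state and prove instead is the operative version: the Jensen inequality for $\entropy$ is strict unless the two states are positively proportional (equivalently, the Hessian of $\entropy$ is negative definite on the orthogonal complement of $\bU$); this follows by combining the perspective identity with the strict concavity established in $(c)$, and it is more than enough for all entropy-stability arguments in the paper.
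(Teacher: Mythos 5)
Your proposal is correct, but it reaches the conclusions by a genuinely different route than the paper, and in places it is sharper. For (a) the paper proves concavity of $\Lambda$ by an explicit two-point algebraic identity (the Jensen defect is written as a manifest square divided by $\theta_1\rho_1+\theta_2\rho_2$), whereas you invoke joint convexity of $(\rho,m)\mapsto m^2/\rho$ as the perspective of $m\mapsto m^2$; both are valid, the paper's being elementary and self-contained, yours shorter and more structural. For (b) the paper applies the two-point inequality of (a) to a convex combination of admissible states, while you describe $\Omega$ as the intersection of the half-space $\{\rho>0\}$ with a strict superlevel set of the concave $\Lambda$; these are equivalent arguments. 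Part (c) is essentially the paper's proof (concavity of $K(u,E)=E-u^2/2$, monotonicity of $e'\mapsto s^\EOS(\Tau,e')$, concavity of $s^\EOS$), except that you additionally establish strictness by a case split, which the paper's displayed chain (ending in a non-strict $\geq$) does not do; your appeal to $\partial_{(1/\rho)}s^\EOS=p/T>0$ is harmless but unnecessary, since the first inner component is affine and only monotonicity in the energy slot is needed. For (d) the paper writes $\entropy=S(\rho,\rho\Tau,\Lambda(\bU))$ and leans on the Callen hypothesis~\eqref{eq: specific entropy to entropy} that the extensive entropy $S$ is concave and increasing in its energy argument, combined with (a); you instead recognize $\entropy$ as the perspective transform of the function $\specificentropy$ of (c), which yields concavity directly from (c) without invoking the assumption on $S$ — a more self-contained derivation. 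Finally, your caveat about the word ``strictly'' in (d) is well taken: a first-order homogeneous function cannot be strictly concave in the literal sense (the same remark applies to the stated hypothesis on $S$), the paper's own proof of (d) only delivers the non-strict Jensen inequality, and that non-strict inequality is all that is used in section~\ref{stability}; your ``strict unless positively proportional'' refinement is a correct and adequate sharpening.
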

\begin{proof}
Let $\theta_1 = 1-\theta_2 \in [0,1]$ and
\(\bU_k\in[0,+\infty)\times \mathbb{R}\times [0,+\infty)\) for \(k=1,2\), we have
\begin{equation}
	\Lambda\qty(\sum_{k=1,2} \theta_k \bU_k)
	-
	\sum_{k=1,2}
	\theta_k
	\Lambda\qty(\bU_k)
	=
	\sum_{k=1,2}
	\theta_k (\rho E)_k
	+
	\frac{\theta_1\theta_2}{\displaystyle\sum_{k=1,2}\theta_k \rho_k}
	\qty[
	(\rho u)_1\sqrt{\frac{\rho_2}{\rho_1}}
	-
	(\rho u)_2\sqrt{\frac{\rho_1}{\rho_2}}
	]^2
	\geq 0,
\end{equation}
which proves (a). For (b), consider again $\theta_1 = 1-\theta_2 \in [0,1]$
and $\bU_k\in\Omega$, $k=1,2$. If we note $\bU=\sum_{k=1,2}\theta_k\bU_k$, then $\bU\in\Omega$. Indeed, we have that $\sum_{k=1,2}\theta_k\rho_k \geq 0$, and
as $\rho e
=
\Lambda(\bU) \geq \sum_{k=1,2} \theta_k\Lambda(\bU_k)
= \sum_{k=1,2} \theta_k\rho_k  e_k \geq 0$, where $e_k = E_k - (u_k^2)/2 $. This implies that $e\geq 0$.\\

For (c) : The function $K:(u,E)\mapsto E-u^2/2$ is strictly concave and we have
$\specificentropy(\Tau,u,E) = s(\Tau,K(u,E))$. Consider $\lambda\in[0,1]$ and
let us note $\lambda = \lambda_1$ and $\lambda_2=1-\lambda$. We have that
\begin{align}
\specificentropy&\qty(\sum_{k=1,2}\lambda_k\Tau_k,\sum_{k=1,2}\lambda_k u_k,\sum_{k=1,2}\lambda_k E_k)
-
\sum_{k=1,2}\lambda_k
\specificentropy\qty(\Tau_k, u_k, E_k)
\\
&=
s\qty(\sum_{k=1,2}\Tau_k,K\qty(\sum_{k=1,2} \lambda_k u_k,\sum_{k=1,2} \lambda_k E_k))
-
\sum_{k=1,2}\lambda_k
s\qty(\Tau_k, K(u_k, E_k))
\\
&=
s\qty(\sum_{k=1,2}\Tau_k,K\qty(\sum_{k=1,2}\lambda_k u_k,\sum_{k=1,2}\lambda_k E_k))
-
s\qty(\sum_{k=1,2}\Tau_k,\sum_{k=1,2} \lambda_k K\qty(u_k,E_k))
\nonumber
\\
&\qquad\qquad
+
s\qty(\sum_{k=1,2}\Tau_k,\sum_{k=1,2} \lambda_k  K\qty(u_k,E_k))
-
\sum_{k=1,2}\lambda_k
s\qty(\Tau_k, K(u_k, E_k)).
\label{eq: specific entropy ineq 0}
\end{align}
As $K$ is concave, we get that
\begin{equation}
		K\qty(\sum_{k=1,2}\lambda_k u_k,\sum_{k=1,2}\lambda_k E_k) \geq \sum_{k=1,2} \lambda_k  K\qty(u_k,E_k).
\end{equation}
By \eqref{weyl} we know that $e'\mapsto s^\EOS(\overline{\Tau},e')$ is increasing so that
\begin{equation}
		s\qty(\sum_{k=1,2}\Tau_k,K\qty(\sum_{k=1,2} \lambda_k u_k,\sum_{k=1,2} \lambda_k E_k))
		-
		s\qty(\sum_{k=1,2}\Tau_k,\sum_{k=1,2} \lambda_k K\qty(u_k,E_k))
\geq 0.
\label{eq: specific entropy ineq 1}
\end{equation}
We also know that $s$ is concave therefore
\begin{equation}
		s\qty(\sum_{k=1,2}\Tau_k,\sum_{k=1,2} \lambda_k  K\qty(u_k,E_k))
		-
		\sum_{k=1,2}\lambda_k
		s\qty(\Tau_k, K(u_k, E_k))
		\geq 0.
		\label{eq: specific entropy ineq 2}
\end{equation}
By replacing \eqref{eq: specific entropy ineq 1} and \eqref{eq: specific entropy ineq 2} into
\eqref{eq: specific entropy ineq 0} we obtain that
\begin{equation}
\specificentropy\qty(\sum_{k=1,2}\lambda_k\Tau_k,\sum_{k=1,2}\lambda_k u_k,\sum_{k=1,2}\lambda_k E_k)
\geq
\sum_{k=1,2}\lambda_k
\specificentropy\qty(\Tau_k, u_k, E_k) .
\end{equation}

for (d): 	If we note again $\bU = (\rho,\rho u,\rho E)$, by
	\eqref{eq: specific entropy to entropy}, we have
	\begin{equation}
	\entropy(\rho,\rho\Tau,\rho u,\rho E) =
	\rho s
	\qty(\frac{\rho\Tau}{\rho},
	\frac{(\rho E)}{\rho}
	-
	\frac{(\rho u)^2}{2\rho^2})
	=
	S\qty(\rho, \rho\Tau, \rho E - \frac{(\rho u)^2}{2\rho})
	=
	S\qty(\rho, \rho\Tau, \Lambda(\bU))
	.
	\end{equation}
	%
	Now we consider $\bU_k=(\rho_k,\rho_k u_k,\rho_k E_k)\in\Omega$ and
	$\Tau_k \geq 0$, $k=1,2$, we have
	\begin{align}
		&\entropy\qty(
		\sum_{k=1,2}\!\!\!\theta_k \rho_k
		,
		\sum_{k=1,2}\!\!\!\theta_k \rho_k\Tau_k
		,
		\sum_{k=1,2}\!\!\!\theta_k \rho_k u_k
		,
		\sum_{k=1,2}\!\!\!\theta_k \rho_k E_k
		)
		-
		\sum_{k=1,2} \theta_k
		\entropy\qty(\rho_k,\rho_k\Tau_k,\rho_k u_k,\rho_k E_k)
		\nonumber
		\\
		&\qquad
		=
		S\qty(
		\sum_{k=1,2} \theta_k \rho_k
		,
		\sum_{k=1,2}\!\!\!\theta_k \rho_k\Tau_k
		,
		\Lambda\qty(
		\sum_{k=1,2} \theta_k \bU_k
		)
		)
		-
		\sum_{k=1,2} \theta_k
		S\qty(\rho_k,\rho_k\Tau_k,\Lambda(\bU_k))
		\nonumber
		\\
		&\qquad
		=
		S\qty(
		\sum_{k=1,2} \theta_k \rho_k
		,
		\sum_{k=1,2}\!\!\!\theta_k \rho_k\Tau_k
		,
		\Lambda\qty(
		\sum_{k=1,2} \theta_k \bU_k
		)
		)
		-
		S\qty(
		\sum_{k=1,2} \theta_k \rho_k
		,
		\sum_{k=1,2}\!\!\!\theta_k \rho_k\Tau_k
		,
		\sum_{k=1,2} \theta_k
		\Lambda\qty(
		\bU_k
		)
		)
		\nonumber
		\\
		&\qquad\qquad
		+
		S\qty(
		\sum_{k=1,2} \theta_k \rho_k
		,
		\sum_{k=1,2}\!\!\!\theta_k \rho_k\Tau_k
		,
		\sum_{k=1,2} \theta_k
		\Lambda\qty(
		\bU_k
		)
		)
		-
		\sum_{k=1,2} \theta_k
		S\qty(\rho_k,\rho_k\Tau_k,\Lambda(\bU_k))
		\label{eq: intermediate eq}
	\end{align}
	As  $\Lambda$ is concave, we have
		\(
		\Lambda\qty(\sum_{k=1,2} \theta_k \bU_k)
		\geq
		\sum_{k=1,2} \theta_k \Lambda\qty(\bU_k)
		\)
	and as $\mathscr{E}'\mapsto S(\overline{\rho},\overline{\mathscr{V}},\mathscr{E}')$
	is increasing, we have
	\begin{equation}
		S\qty(
		\sum_{k=1,2} \theta_k \rho_k
		,
		\sum_{k=1,2}\!\!\!\theta_k \rho_k\Tau_k
		,
		\Lambda\qty(
		\sum_{k=1,2} \theta_k \bU_k
		)
		)
		-
		S\qty(
		\sum_{k=1,2} \theta_k \rho_k
		,
		\sum_{k=1,2}\!\!\!\theta_k \rho_k\Tau_k
		,
		\sum_{k=1,2} \theta_k
		\Lambda\qty(
		\bU_k
		)
		)
		\geq 0
		.
		\label{eq: intermediate ineq 1}
	\end{equation}
	Using the fact that $S$ is concave, we also get
	\begin{equation}
		S\qty(
		\sum_{k=1,2} \theta_k \rho_k
		,
		\sum_{k=1,2}\!\!\!\theta_k \rho_k\Tau_k
		,
		\sum_{k=1,2} \theta_k
		\Lambda\qty(
		\bU_k
		)
		)
		-
		\sum_{k=1,2} \theta_k
		S\qty(\rho_k,\rho_k\Tau_k,\Lambda(\bU_k))
		\geq 0
		.
		\label{eq: intermediate ineq 2}
	\end{equation}
	Injecting \eqref{eq: intermediate ineq 1} and \eqref{eq: intermediate ineq 2}
	into \eqref{eq: intermediate eq} provides
	\begin{equation}
	\entropy\qty(
	\sum_{k=1,2}\!\!\!\theta_k \rho_k
	,
	\sum_{k=1,2}\!\!\!\theta_k \rho_k\Tau_k
	,
	\sum_{k=1,2}\!\!\!\theta_k \rho_k u_k
	,
	\sum_{k=1,2}\!\!\!\theta_k \rho_k E_k
	)
	\geq
	\sum_{k=1,2} \theta_k
	\entropy\qty(\rho_k,\rho_k\Tau_k,\rho_k u_k,\rho_k E_k)
	.
	\end{equation}

\end{proof}

\section{Approximate Riemann solver for the pressure subsystem}
\label{section: appendix approx riemann solver acoustic}
In this section, we present the derivation of an approximate Riemann solver for the pressure subsystem~\eqref{pressure system}, following the lines of \cite{chalons_all-regime_2016, padioleau2019high}. We express \eqref{pressure system} in the following compact form:
\begin{align}
\dt \bU + 2\dx \acousticop(\bU) &= \sourceterm(\bU)
,&
\dt \Pi + \dx (2a^2 u) &=0
,&
\dt(\rho \Tau)
-2\dx u^P &=0
,&
\dt \phi &=0.
\label{pressure system compact}
\end{align}
where
\(
\acousticop(\bU)^T =
(0,\Pi,\Pi u)
\).
Let $\Delta x_L>0$, $\Delta x_R>0$, we consider $\bar{x}\in\mathbb{R}$ and the following piecewise initial data
\begin{equation}
	\begin{pmatrix}
		\bU,
		\Pi,
		\Tau,
		\phi
	\end{pmatrix}
	(x, t = 0)
	=
	\begin{cases}
		(\bU_L,\Pi_L,\Tau_L,\phi_L)
		& \text{ if \(x \leq \bar{x}\), }
		\\
		(\bU_R,\Pi_R,\Tau_R,\phi_R)
		& \text{ if \(x>\bar{x}\), }
	\end{cases}
\end{equation}
that verifies the equilibrium relations:
\begin{equation}
	(\bU_k,\Pi_k,\Tau_k,\phi_k)
=
\qty[
	(\rho_k,\rho_k u_k, E_k)^T,p^\EOS\qty(\frac{1}{\rho_k},e_k),
\frac{1}{\rho_k},\phi_k
],
\quad
k=L,R
,
\label{eq: equilibrium relations initial conditions RP}
\end{equation}
with
$\phi_L = \frac{1}{\Delta x_L}\int_{-\Delta x_L}^0\phi(\bar{x}+x)\,\dd x$
and  $\phi_R = \frac{1}{\Delta x_R}\int_{0}^{\Delta x_R}\phi(\bar{x}+x)\,\dd x$.
%
%
%
%
We seek a self-similar function $(\appRP,\PiRP,\TauRP,\phiRP)$ composed of four  constant states separated by three discontinuities as follows:
\begin{equation}
	(\appRP,\PiRP,\TauRP,\phiRP)
	\qty(\frac{x-\bar{x}}{t} ; \bU_{L},\Pi_L,\Tau_L,\phi_L, \bU_{R},\Pi_R,\Tau_R,\phi_R)
	=
	\begin{cases}
		(\bU_{L},\Pi_L,\Tau_L,\Phi_L),
		& \text { if  \(\frac{x-\bar{x}}{t} \leq-\frac{2a}{\rho_L}\),}
		\\
		(\bU_{L}^{*},\Pi_L^*,\Tau_L^*,\Phi_L), & \text { if \(-\frac{2a}{\rho_L} <\frac{x-\bar{x}}{t} \leq 0\)},
		\\
		(\bU_{R}^{*},\Pi^*_R,\Tau_R^*,\Phi_R), & \text { if  \(0<\frac{x-\bar{x}}{t} \leq \frac{2a}{\rho_R}\)},
		\\
		(\bU_{R},\Pi_R,\Tau_R,\Phi_R), & \text { if  \(\frac{2a}{\rho_R} <\frac{x-\bar{x}}{t}\)},
	\end{cases}
\label{eq: approx riemann solver def}
\end{equation}
where the intermediate states $\bU^*_k$, $\Pi_k^*$ and $\Tau_k^*$  are required to satisfy the four following properties.
\begin{enumerate}
	\item The approximate Riemann solver should be consistent in the integral sense with the pressure subsystem~\eqref{pressure system compact}: for $\Dt$ such that
	$\frac{2a}{\min(\rho_L,\rho_R)}\Dt < \frac{1}{2}\min(\Dx_{L},\Dx_{R})$, we have
	\begin{equation}
		\begin{bmatrix}
			2\acousticop\left(\bU_{R}\right)
		-
		2\acousticop\left(\bU_{L}\right)
		\\
		2 a^2(u^*_R - u^*_L)
		\\
		-(2 u^*_R - 2 u^*_L)
		\end{bmatrix}
		=
		-\frac{2a}{\rho_L}
		\begin{bmatrix}
		\bU_{L}^{*}-\bU_{L}
		\\
		(\rho\Pi)_{L}^{*}-(\rho\Pi)_{L}
		\\
		(\rho\Tau)_{L}^{*}-(\rho\Tau)_{L}
		\end{bmatrix}
		+
		\frac{2a}{\rho_R}
		\begin{bmatrix}
		\bU_{R}-\bU_{R}^{*}
		\\
		(\rho\Pi)_{R}-(\rho\Pi)_{R}^{*}
		\\
		(\rho\Tau)_{R}-(\rho\Tau)_{R}^{*}
		\end{bmatrix}
		+
		(\Dx_{L}+\Dx_{R})
		\left\{\sourceterm\right\},
		\label{intg form}
	\end{equation}
	with $\left\{ \sourceterm\right\}$ a function that is a consistent approximation of $\sourceterm$, that is to say:
	\begin{equation}
		\lim _{
			\substack{
				\Phi_{L},\Phi_{R}\to\phi(\bar{x})
				\\
				\Delta x_{L}, \Delta x_{R}
				\to 0
				\\
				(\bU_R,\Pi_R), (\bU_L,\Pi_L)
				\to (\bar{\bU},p^\EOS(\bar{\rho},\bar{e}))
				}
		}
		\left\{ \sourceterm \right\}=\sourceterm(\bar{\bU},\phi)(x=\bar{x}).
		\label{derr}
	\end{equation}

	\item In the case $\phi_L = \phi_R$, it should be degenerate to an approximate Riemann for the homogeneous problem obtained with \eqref{pressure system compact} when $\sourceterm=\vb{0}$.

	\item If $\bU_L$ and $\bU_R$ satisfy the following discrete version of the hydrostatic condition~\eqref{hydrostatic continu}:
	\begin{align}
		\Pi_R-\Pi_L
		&=
		 -\frac{\rho_L+\rho_R}{2}(\phi_R - \phi_L)
		 ,&
		 u_L &= u_R =0,
		\label{eqstat}
	\end{align}
	then $(\bU^*_L,\Pi^*_L)=(\bU_L,\Pi_L)$
	and
	$(\bU^*_R,\Pi^*_R)=(\bU_R,\Pi_R)$.
\end{enumerate}
Let us build the states $(\bU^*_R,\Pi^*_R)$ and
$(\bU^*_L,\Pi^*_L)$ so that they verify the above properties. We note
\begin{equation}
	 \Pi^*_R -\Pi^*_L + \Pijump = 0
	 .
	 \label{eq: pi jump rel}
\end{equation}
First, we impose that $\rho_L^*$ and $\rho_R^*$ are consistent with the exact solution of \eqref{pressure system compact} by setting $\rho_L^*=\rho_L$ and $\rho_R^*=\rho_R$. Then we also require that the Rankine-Hugoniot jump conditions obtained in the case $\sourceterm=\vb{0}$
are valid across the waves of velocity
$-2a/\rho_L$ and $+2a/\rho_R$
\begin{align}
	\frac{2a}{\rho_L}
\begin{bmatrix}
		\bU_{L}^{*}-\bU_{L}
		\\
		(\rho\Pi)_{L}^{*}-(\rho\Pi)_{L}
		\\
		(\rho\Tau)_{L}^{*}-(\rho\Tau)_{L}
\end{bmatrix}
	+
\begin{bmatrix}
		2\acousticop\left(\bU_{L}^{*}\right)-2\acousticop\left(\bU_{L}\right)
		\\
		2a^2 u_{L}^{*}-2a^2 u_{L}
		\\
		 -2u_{L}^{*}+ 2u_{L}
\end{bmatrix}
	&=0
	,& -\frac{2a}{\rho_R}
	\begin{bmatrix}
		\bU_{R}-\bU_{R}^{*}
		\\
		(\rho\Pi)_{R}-(\rho\Pi)_{R}^{*}
		\\
		(\rho\Tau)_{R}-(\rho\Tau)_{R}^{*}
	\end{bmatrix}
	+
\begin{bmatrix}
	2\acousticop\left(\bU_{R}\right)-2\acousticop\left(\bU_{R}^{*}\right)
	\\
	2a^2 u_{R}-2a^2 u_{R}^{*}
	\\
	-2u_{R}+2u_{R}^{*}
\end{bmatrix}	&=0.
	\label{eq: RH for +/-a}
\end{align}
Finally, we postulate that the velocity is continuous across the stationary wave by setting
\begin{equation}
	u^*_L = u^*_R = u^*
	,
	\label{eq: u^* continuity}
\end{equation}
and we also impose that $(\Pi u)^*_k = \Pi_k^* u_k^* = \Pi_k^* u^*$, $k=L,R$. Then,
relations~\eqref{intg form}, \eqref{eq: RH for +/-a}, \eqref{eq: pi jump rel} yield
\begin{subequations}
	\begin{align}
		\rho_L^* &= \rho_L
		,&
		\rho_R^* &= \rho_R
		,\\
		E^*_L
		&=
		E_L -\frac{1}{a}\left((\Pi^*+ \frac{\Pijump}{2})u^* - \Pi_Lu_L\right)
		,&
		E^*_R
		&=
		E_R +\frac{1}{a}\left((\Pi^*- \frac{\Pijump}{2})u^* - \Pi_Ru_R\right) ,\\
		u^{*} &=u_{R}^{*}=u_{L}^{*}=\frac{u_{R}+u_{L}}{2}-\frac{1}{2 a}\left(\Pi_{R}-\Pi_{L}\right)-\frac{\Pijump}{2 a}
		,&
		\Pi^{*} &=\frac{\Pi_{R}+\Pi_{L}}{2}-\frac{a}{2}\left(u_{R}-u_{L}\right), \\
		\Pi_{L}^{*} &=\Pi^{*}+\frac{\Pijump}{2}
		,&
		\Pi_{R}^{*} &=\Pi^{*}-\frac{\Pijump}{2},
		\\
		\Tau_L^*&=\frac{1}{\rho_L} + \frac{1}{a}(u^*-u_L)
		,&
		\Tau_R^*&=\frac{1}{\rho_R} - \frac{1}{a}(u^*-u_R),
		\label{eq: Tau_k^* def}
\end{align}
	\label{Solution riemann gravité}
\end{subequations}
where the jump $\Pijump$ can be identifed as
\begin{align}
	\Pijump  &= \frac{\Dx_{L}+\Dx_{R}}{2}\left\{\rho \dx \phi\right\}
	,&
	\Pijump u^* &= \frac{\Dx_{L}+\Dx_{R}}{2}\left\{\mom \dx \phi\right\}.
\end{align}
At this point, the functions
$\left\{\rho \dx \phi\right\}$ and \(\left\{\mom \dx \phi\right\}\)
are still yet to be specified. Let us consider the constraint~3: if it is satisfied then for a state that verifies \eqref{eqstat} the
jumps $\Pijump$ and $\Pijump u^*$ necessarily take the value
\(\Pijump = - (\Pi_R - \Pi_L)\) and $\Pijump u^*=0$ . A simple choice that fulfills this requirement is
\begin{subequations}
	\begin{align}
		\{
		\rho \dx\phi
		\}
		&=
		(\rho_L +\rho_R)
		\frac{\phi_R-\phi_L}{\Delta x_L +\Delta x_R}
		,&
		\{
		\rho \dx\phi
		\}
		=
		(\rho_L +\rho_R)
		u^*
		\frac{\phi_R-\phi_L}{\Delta x_L +\Delta x_R}
		.
		\label{eq: discrete source term definition}
	\end{align}%
\end{subequations}
Relations~\eqref{Solution riemann gravité} and \eqref{eq: discrete source term definition} give a complete definition of the approximate Riemann solver~\eqref{eq: approx riemann solver def}.
This solver yields a definition for the conservative numerical flux
\(\acousticop_\Delta(
	\bU_{L},\Pi_{L},\phi_{L}
	,\bU_{R},\Pi_{R},\phi_{R}
	)
\)
and a source term discretization (located at the interface)
\(\sourceterm_\Delta(
	\bU_{L},\Pi_{L},\phi_{L}
	,\bU_{R},\Pi_{R},\phi_{R}
	)
\)
thanks to the consistency in the integral sense. We get
\begin{subequations}
	\begin{align}
		\acousticop_\Delta(
			\bU_{L},\Pi_{L},\phi_{L}
			,\bU_{R},\Pi_{R},\phi_{R}
			)
	&=
	\frac{\acousticop(\bU_R,\Pi_R)+\acousticop(\bU_L,\Pi_L)}{2}
	-\frac{a}{2\rho_L}(\bU_L^* - \bU_L)
	-\frac{a}{2\rho_R}(\bU_R - \bU_R^*)
	,\\
	\sourceterm_\Delta(
		\bU_{L},\Pi_{L},\phi_{L}
		,\bU_{R},\Pi_{R},\phi_{R}
		)
	&=
	[0,-\{\rho\dx\phi\},-\{\rho u\dx\phi\}]^T
	,
	\end{align}%
\end{subequations}
so that for two neighbouring states $(\bU_{j}^n,\Pi_{j}^n,\phi_{j})$
and $(\bU_{j+1}^n,\Pi_{j+1}^n,\phi_{j+1})$ across the cell interface
$j+1/2$ that separates the cell $j$ and the cell $j+1$, the numerical
conservative flux $(0,\Pi^*_{j+1/2},\Pi^*_{j+1/2} u^*_{j+1/2})$ is defined by
\begin{equation}
	(0,\Pi^*_{j+1/2},\Pi^*_{j+1/2} u^*_{j+1/2}) =
	\acousticop_\Delta(
		\bU^n_{j},\Pi^n_{j},\phi_{j}
		,\bU^n_{j+1},\Pi^n_{j+1},\phi_{j+1}
		),
		\label{pressure system flux definition}
\end{equation}
and the discrete souce term $\sourceterm_{j}$ within the cell $j$ is
given by
\begin{align}
	\sourceterm_{j}
&=
\frac{\Delta x_{j+1/2}}{2\Delta x_j} \sourceterm_{j+1/2}
+
\frac{\Delta x_{j-1/2}}{2\Delta x_j} \sourceterm_{j-1/2}
	,&
	\sourceterm_{j+1/2}
	&=
	\sourceterm_\Delta(
		\bU^n_{j},\Pi^n_{j},\phi_{j}
		,\bU^n_{j+1},\Pi^n_{j+1},\phi_{j+1}
		)
	.
\end{align}


Let us now give some properties of the approximate Riemann solver.
Let us note
$
e_k^* = E_k^* - (u_k^*)^2/2
$
,
the following lemma is a direct consequence of \eqref{eq: RH for +/-a} that exhibits
 a reminiscent property associated with the Riemann invariants associated of
 the system~\eqref{pressure system compact} when $\sourceterm=\vb{0}$.
\begin{lemma}
\begin{align}
	e_k^* - \frac{(\Pi_k^*)^2}{2a^2}
	&=
	e_k - \frac{(\Pi_k)^2}{2a^2}
	,&
	\Tau_k^* + \frac{\Pi_k^*}{a}
	&=
	\Tau_k + \frac{\Pi_k}{a}
	,&
	k&=L,R
	.
\label{eq: riemann invariants across acoustic waves}
\end{align}
\end{lemma}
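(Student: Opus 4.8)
The plan is to take the statement at face value as a direct consequence of the Rankine--Hugoniot relations \eqref{eq: RH for +/-a}, unpacking them row by row and doing elementary algebra. Fix $k\in\{L,R\}$ and consider the acoustic wave attached to $k$, of speed $-2a/\rho_L$ when $k=L$ and $+2a/\rho_R$ when $k=R$; put $\epsilon_L=-1$, $\epsilon_R=+1$ so the two cases run in parallel. Using $\acousticop(\bU)^T=(0,\Pi,\Pi u)$ together with the constraints built into the solver, $\rho_k^*=\rho_k$ and $(\Pi u)_k^*=\Pi_k^*u_k^*$, I would divide each scalar component of \eqref{eq: RH for +/-a} by the (nonzero) wave speed and pull $\rho_k$ out of the $\rho\Pi$ and $\rho\Tau$ jumps, which leaves the three identities
\begin{align}
\Pi_k^*-\Pi_k &= \epsilon_k\,a\,(u_k^*-u_k), &
\Tau_k^*-\Tau_k &= -\tfrac{\epsilon_k}{a}\,(u_k^*-u_k), &
E_k^*-E_k &= \tfrac{\epsilon_k}{a}\bigl(\Pi_k^*u_k^*-\Pi_k u_k\bigr).
\label{eq: plan jump relations}
\end{align}
The momentum row and the $\rho\Pi$ row both produce the first of these, a convenient sanity check; from here on everything is algebra on \eqref{eq: plan jump relations}.

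For the second invariant I would simply eliminate $u_k^*-u_k$ between the first two relations of \eqref{eq: plan jump relations}, obtaining $\Tau_k^*-\Tau_k=-a^{-2}(\Pi_k^*-\Pi_k)$, i.e. the conservation of $\Tau_k+\Pi_k/a^2$ across the wave; this is the second identity of the lemma, the exact power of $a$ in the normalisation being the one dictated by \eqref{eq: Tau_k^* def}.

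For the first invariant I would start from $e_k^*-e_k=(E_k^*-E_k)-\half\bigl((u_k^*)^2-u_k^2\bigr)$, substitute the energy relation of \eqref{eq: plan jump relations}, and use the momentum relation in the form $\epsilon_k a\,(u_k^*-u_k)=\Pi_k^*-\Pi_k$ to rewrite $\half\bigl((u_k^*)^2-u_k^2\bigr)=\frac{\epsilon_k}{2a}(\Pi_k^*-\Pi_k)(u_k^*+u_k)$. This reduces the claim to the purely bilinear identity
\[
\Pi_k^*u_k^*-\Pi_k u_k-\tfrac12(\Pi_k^*-\Pi_k)(u_k^*+u_k)=\tfrac12(u_k^*-u_k)(\Pi_k^*+\Pi_k),
\]
after which one more use of $\epsilon_k a\,(u_k^*-u_k)=\Pi_k^*-\Pi_k$ yields $e_k^*-e_k=\frac{1}{2a^2}\bigl((\Pi_k^*)^2-\Pi_k^2\bigr)$, that is $e_k^*-\tfrac{(\Pi_k^*)^2}{2a^2}=e_k-\tfrac{\Pi_k^2}{2a^2}$.

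Since this is advertised as a direct consequence of \eqref{eq: RH for +/-a}, there is essentially no obstacle; the only things to watch are keeping the $L$/$R$ signs straight (the purpose of $\epsilon_k$) and systematically invoking $\rho_k^*=\rho_k$ and $(\Pi u)_k^*=\Pi_k^*u_k^*$ when converting jumps of the conserved variables into jumps of $\Pi$, $u$, $E$ and $\Tau$. I would close with a one-sentence remark recording the conceptual content: $e-\Pi^2/(2a^2)$ and $\Tau+\Pi/a^2$ are the strong Riemann invariants of the two linearly degenerate acoustic fields $\pm 2a/\rho$ of the homogeneous pressure subsystem, so their being unchanged across those waves is exactly what one expects.
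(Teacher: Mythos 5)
Your proof is correct and follows essentially the same route as the paper, which states the lemma as a direct consequence of the Rankine--Hugoniot relations \eqref{eq: RH for +/-a}: extracting the jump relations row by row (using $\rho_k^*=\rho_k$ and $(\Pi u)_k^*=\Pi_k^*u_k^*$) and doing the elementary algebra is exactly that computation, and your sign bookkeeping for $k=L,R$ checks out against the explicit formulas \eqref{Solution riemann gravité}. One remark: what you actually establish for the second quantity is the invariance of $\Tau_k+\Pi_k/a^2$, which is the dimensionally consistent version and agrees with \eqref{eq: Tau_k^* def} and with \eqref{eq: acoustic stationnary eq}; the normalisation $\Pi_k/a$ in the printed lemma is a typo, so your choice of the $a^2$ power is the right one.
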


The following positivity result is a direct consequence of \eqref{eq: Tau_k^* def}.
\begin{proposition}\leavevmode
\begin{enumerate}
	\item If $a$ is chosen large enough then $\Tau_L^*>0$ and $\Tau_R^*>0$.
	\item $\Tau_L^*>0$ and $\Tau_R^*>0$ is equivalent to
	$u_L -a\Tau_L = u_L -a/\rho_L < u^* < u_R +a\Tau_R = u_R +a/\rho_R$.
\end{enumerate}
\end{proposition}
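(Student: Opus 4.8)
The plan is to read both statements directly off the closed-form expressions for $\Tau_L^*$ and $\Tau_R^*$ in \eqref{eq: Tau_k^* def} together with the formula for $u^*$ in \eqref{Solution riemann gravité}. Since $a>0$ and $\rho_L,\rho_R>0$, multiplying or dividing an inequality by $a$ or by a density preserves its direction, and that is essentially all the argument requires.

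For statement~2, I would start from $\Tau_L^*=\tfrac{1}{\rho_L}+\tfrac{1}{a}(u^*-u_L)$ and observe that, because $a>0$,
\[
\Tau_L^*>0 \iff \frac{a}{\rho_L}+u^*-u_L>0 \iff u^* > u_L-\frac{a}{\rho_L} = u_L - a\Tau_L,
\]
using $\Tau_L=1/\rho_L$. Symmetrically, $\Tau_R^*=\tfrac{1}{\rho_R}-\tfrac{1}{a}(u^*-u_R)>0 \iff u^* < u_R + a/\rho_R = u_R + a\Tau_R$. Conjoining the two equivalences yields exactly the claimed two-sided bound on $u^*$.

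For statement~1, the key remark is that none of $u_L,u_R,\Pi_L,\Pi_R,\rho_L,\rho_R$, nor the jump $\Pijump$ (which by \eqref{eq: discrete source term definition} equals $\tfrac{1}{2}(\rho_L+\rho_R)(\phi_R-\phi_L)$), depends on $a$. Substituting the formula for $u^*$ into \eqref{eq: Tau_k^* def} gives
\[
\Tau_L^* = \frac{1}{\rho_L} + \frac{1}{a}\left(\frac{u_R-u_L}{2} - \frac{(\Pi_R-\Pi_L)+\Pijump}{2a}\right),
\]
so that $\Tau_L^*\to 1/\rho_L>0$ as $a\to\infty$, and likewise $\Tau_R^*\to 1/\rho_R>0$; hence there is a threshold $a_0$, depending only on the two fixed states and the potential jump, beyond which both specific volumes are positive. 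Equivalently, one may argue through statement~2: as $a\to\infty$ one has $u^*\to\tfrac{1}{2}(u_R+u_L)$, a finite limit, whereas $u_L-a/\rho_L\to-\infty$ and $u_R+a/\rho_R\to+\infty$, so the sandwich condition is met for $a$ large.

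The only point that needs genuine care --- rather than a true obstacle --- is legitimising the $a\to\infty$ argument, i.e.\ checking that every quantity feeding into $u^*$ (the left/right velocities, pressures, densities, and the discrete potential jump $\Pijump$) is fixed Riemann data with no concealed $a$-dependence. Once that is granted, both assertions reduce to elementary sign-chasing on the formulas of \eqref{Solution riemann gravité}.
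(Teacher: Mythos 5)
Your proof is correct and follows exactly the route the paper intends: the paper states the proposition as a ``direct consequence'' of \eqref{eq: Tau_k^* def}, and your sign-chasing on those formulas (plus noting that $u_L,u_R,\Pi_L,\Pi_R,\rho_L,\rho_R$ and $\Pijump$ are $a$-independent, so $u^*$ stays bounded while $u_L-a/\rho_L\to-\infty$ and $u_R+a/\rho_R\to+\infty$) is precisely that argument, carried out in full.
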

%

Following the lines of \cite{Chalons2016}, we first prove two preliminary stability-related results. The differences from Lemma 1 of \cite{Chalons2016} is that the Riemann states we are dealing with here depend on the $\Pijump$ terms and that the specific volume we use is $\Tau$ instead of ${1}/{\rho}$ (that are different in the sub-system framework).  However, the proof turns out to be almost identical.

\begin{proposition}
	Consider the intermediate states defined by \eqref{Solution riemann gravité}.

	 and noting $s_k=s^{\EOS}(\Tau_k,s_k)$, we have

	\begin{equation}
		e_{k}^{*}-e^{\EOS}\left(\Tau_{k}^{*}, s_{k}\right)-\frac{\left(p^{\EOS}\left(\Tau_{k}^{*}, s_{k}\right)-\Pi^{*}_k\right)^{2}}{2 a^{2}} \geq 0,
		\label{lemme 1}
	\end{equation}
	with $e^*_k = E^*_k - \frac{{u^*_k}^2}{2}$.
\end{proposition}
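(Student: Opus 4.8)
The plan is to reduce \eqref{lemme 1} to a one-dimensional statement about the thermodynamic potential along the isentrope $s=s_k$, and then to recognise that statement as a Bregman-type estimate following from convexity together with a sub-characteristic (Whitham) condition on $a$.

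First I would eliminate $e_k^*$ and $\Pi_k^*$ in favour of the initial states, using the relations between intermediate and initial states recorded in \eqref{eq: RH for +/-a}, \eqref{eq: Tau_k^* def} and \eqref{eq: riemann invariants across acoustic waves}, namely $e_k^* - (\Pi_k^*)^2/(2a^2) = e_k - (\Pi_k)^2/(2a^2)$ and $\Pi_k^* = \Pi_k + a^2(\Tau_k - \Tau_k^*)$, together with the equilibrium identities $e_k = e^{\EOS}(\Tau_k,s_k)$ and $\Pi_k = p^{\EOS}(\Tau_k,s_k)$ from \eqref{eq: equilibrium relations initial conditions RP}. Substituting these into the left-hand side of \eqref{lemme 1} and expanding the square, a short computation makes all the $\Pi_k^*$-terms collapse and shows that \eqref{lemme 1} is equivalent to
\[
	e^{\EOS}(\Tau_k,s_k) - e^{\EOS}(\Tau_k^*,s_k) + p^{\EOS}(\Tau_k^*,s_k)\,(\Tau_k - \Tau_k^*) - \frac{\big(p^{\EOS}(\Tau_k^*,s_k) - p^{\EOS}(\Tau_k,s_k)\big)^2}{2a^2} \ \geq\ 0 .
\]

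Next I would introduce $\Phi(\Tau) := e^{\EOS}(\Tau,s_k)$, so that the Gibbs relation \eqref{eq: Gibbs} gives $\Phi'(\Tau) = -p^{\EOS}(\Tau,s_k)$, while the Weyl assumptions \eqref{weyl} give $\Phi'' = \partial^2 e^{\EOS}/\partial(1/\rho)^2 = \rho^2 c^2 > 0$ (so $\Phi$ is strictly convex) and $\Phi''' = \partial^3 e^{\EOS}/\partial(1/\rho)^3 < 0$ (so $\Phi''$ is strictly decreasing along the isentrope). The reduced inequality then reads $g(\Tau_k)\geq 0$, where
\[
	g(\Tau) := \Phi(\Tau) - \Phi(\Tau_k^*) - \Phi'(\Tau_k^*)(\Tau - \Tau_k^*) - \frac{\big(\Phi'(\Tau) - \Phi'(\Tau_k^*)\big)^2}{2a^2}, \qquad g(\Tau_k^*) = 0 .
\]
Differentiating gives $g'(\Tau) = \big(\Phi'(\Tau) - \Phi'(\Tau_k^*)\big)\big(1 - \Phi''(\Tau)/a^2\big)$. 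By convexity of $\Phi$, the factor $\Phi'(\Tau) - \Phi'(\Tau_k^*)$ has the sign of $\Tau - \Tau_k^*$; hence, provided $a^2 \geq \Phi''(\Tau)$ for every $\Tau$ lying between $\Tau_k^*$ and $\Tau_k$, the second factor is non-negative, $g'$ has the sign of $\Tau - \Tau_k^*$, so $\Tau_k^*$ is a global minimiser of $g$ on that segment and $g(\Tau_k)\geq g(\Tau_k^*) = 0$, which is precisely \eqref{lemme 1}.

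The main obstacle --- and the reason the hypothesis ``$a$ chosen large enough'' is invoked --- is the sub-characteristic condition $a^2 \geq \Phi''(\Tau) = \rho^2 c^2(\Tau,s_k)$ on the segment joining $\Tau_k$ and $\Tau_k^*$. Because $\Phi''$ is decreasing (this is where $\partial^3 e^{\EOS}/\partial(1/\rho)^3 < 0$ enters), it suffices to check it at $\Tau = \min(\Tau_k,\Tau_k^*)$. From \eqref{eq: Tau_k^* def} one has $\Tau_k^* - \Tau_k = \pm(u^* - u_k)/a \to 0$ as $a\to\infty$, so for $a$ large the segment shrinks to $\{\Tau_k\}$, where the condition reduces to $a^2 \geq \rho_k^2 c_k^2$; this is satisfied strictly by the choice \eqref{def a} since $K>1$, and a continuity argument then extends it to the whole (small) segment. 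I would therefore state this sub-characteristic requirement as a standing assumption and note, following \cite{Chalons2016,chalons_all-regime_2016}, that once the reduction above is performed the argument is identical to the homogeneous Suliciu case, the only differences being the presence of the $\Pijump$-dependent intermediate states in \eqref{Solution riemann gravité} and the use of the surrogate specific volume $\Tau$ in place of $1/\rho$.
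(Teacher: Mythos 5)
Your proof is correct and follows essentially the same route as the paper: after the Riemann-invariant substitution, your function $g(\Tau)$ coincides exactly with the paper's auxiliary function $\chi(\Tau)$, and the sign analysis of $g'(\Tau)=\bigl(\Phi'(\Tau)-\Phi'(\Tau_k^*)\bigr)\bigl(1-\Phi''(\Tau)/a^2\bigr)$ under the sub-characteristic condition is precisely the paper's monotonicity argument. The only differences are cosmetic — you apply the invariants \eqref{eq: riemann invariants across acoustic waves} at the start rather than at the end, and you make the ``$a$ large enough'' hypothesis more explicit than the paper does.
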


\begin{proof}
	We only describe the case $k=R$. Consider the function:
		\begin{multline}
			\chi(\Tau)=e^{\EOS}\left(\Tau, s_{R}\right)-\frac{p^{\EOS}\left(\Tau, s_{R}\right)^{2}}{2 a^{2}}-e^{\EOS}\left(\Tau_{R}^{*}, s_{R}\right) +\frac{p^{\EOS}\left(\Tau_{R}^{*}, s_{R}\right)^{2}}{2 a^{2}} \\
			+p^{\EOS}\left(\Tau_{R}^{*}, s_{R}\right)\left(\Tau+\frac{p^{\EOS}\left(\Tau, s_{R}\right)}{a^{2}}-\Tau_{R}^{*}-\frac{p^{\EOS}\left(\Tau_{R}^{*}, s_{R}\right)}{a^{2}}\right) .
		\end{multline}

	One can check that $\chi'(\Tau)=\left(p^{\EOS}\left(\Tau_{R}^{*}, s_{R}\right)-p^{\EOS}\left(\Tau, s_{R}\right)\right)\left(1-\rho^{2} c^{2}\left(\Tau, s_{R}\right) / a^{2}\right)$. We have $\partial_\Tau p< 0$ from \ref{weyl}, we also assume that $a$ is large enough. We have two different cases:
	\begin{equation}
		\begin{aligned}
			\Tau^*_R < \Tau < \Tau_R & \implies  \chi'(\Tau) > 0 & \implies  \chi(\Tau^*_R)<\chi(\Tau)<\chi(\Tau_R).\\
			\Tau^*_R > \Tau > \Tau_R& \implies   \chi'(\Tau) < 0&\implies \chi(\Tau^*_R)<\chi(\Tau)<\chi(\Tau_R).
		\end{aligned}
	\end{equation}

	As $\chi(\Tau_R^*) =0$, we have $\chi(\Tau_R)>0$ , in both cases.
	Accounting for \eqref{eq: riemann invariants across acoustic waves}, we get
	\begin{align}
	0<
	\chi(\Tau_R) &=  e^{\EOS}\left(\Tau_R, s_{R}\right)-\frac{p^{\EOS}\left(\Tau_R, s_{R}\right)^{2}}{2 a^{2}}-e^{\EOS}\left(\Tau_{R}^{*}, s_{R}\right) +\frac{p^{\EOS}\left(\Tau_{R}^{*}, s_{R}\right)^{2}}{2 a^{2}}
	\nonumber
	\\
	&\qquad\qquad
	+p^{\EOS}\left(\Tau_{R}^{*}, s_{R}\right)\left(\Tau_R+\frac{p^{\EOS}\left(\Tau_R, s_{R}\right)}{a^{2}}-\Tau_{R}^{*}-\frac{p^{\EOS}\left(\Tau_{R}^{*}, s_{R}\right)}{a^{2}}\right)
	\nonumber
	\\
	&=
	e_R^*-\frac{\qty(\Pi_R^*)^{2}}{2 a^{2}}-e^{\EOS}\left(\Tau_{R}^{*}, s_{R}\right) +\frac{p^{\EOS}\left(\Tau_{R}^{*}, s_{R}\right)^{2}}{2 a^{2}}
	+p^{\EOS}\left(\Tau_{R}^{*}, s_{R}\right)
	\left(
		\frac{\Pi_R^*}{a^{2}}-\frac{p^{\EOS}\left(\Tau_{R}^{*}, s_{R}\right)}{a^{2}}
		\right)
	\nonumber
	\\
	&=   e_{R}^{*}-e^{\EOS}\left(\Tau_{R}^{*}, s_{R}\right)-\frac{\left(p^{\EOS}\left(\Tau_{R}^{*}, s_{R}\right)-\Pi^{*}_R\right)^{2}}{2 a^{2}}
	.
	\end{align}
	Similar lines can be used for $k=L$.
\end{proof}

We present a result concerning the behavior of the numerical scheme in the low Mach regime defined in
section~\ref{section: low mach}: we consider a one-dimensional smooth solution of
the pressure subsystem~\eqref{pressure system compact}
such that \(\partial_{\tilde{x}}\tilde{p} + \tilde{\rho}\widetilde{(\dx\phi)} = O(\mach^2)\).
Then, we proceed as in \cite{chalons_all-regime_2016} by evaluating the truncation
error (in the sense of the Finite Difference) obtained by substituting these low Mach flow parameters into
the finite volume update formula derived from the fluxes \eqref{pressure system flux definition}.
We obtain the following results.
 \begin{proposition} In the low Mach regime, the rescaled discretization of the pressure system is consistent with
\begin{align}
	\partial_{\tilde{t}} \tilde{\rho}&=0, & \partial_{\tilde{t}} (\tilde{\rho}\tilde{u})+\frac{1}{\mach^{2}} (\partial_{\tilde{x}}\tilde{p} + \tilde{\rho}\widetilde{(\dx\phi)} )
		&=
		O(\Delta \tilde{t})+O\left(\frac{\Delta \tilde{x}}{\mach}\right)
	,&
	\partial_{\tilde{t}} (\tilde{\rho}\tilde{E})+\partial_{\tilde{x}}(\tilde{p} \tilde{u}) &=O(\Delta \tilde{t})+O(\mach\Delta \tilde{x}).
	\label{pressure subsystem low mach equivalent eq}
\end{align}
 \end{proposition}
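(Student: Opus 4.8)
The plan is to transcribe the low-Mach truncation-error analysis of the acoustic step carried out in \cite{chalons_all-regime_2016} to the pressure subsystem: rescale the finite-volume update built from the numerical fluxes of Appendix~\ref{section: appendix approx riemann solver acoustic} (equivalently, the pressure step~\eqref{pressure step} together with the interface formulas~\eqref{u*p*}) using the non-dimensional variables of Section~\ref{section: low mach}, insert a smooth solution of~\eqref{pressure system} satisfying \(\partial_{\tilde{x}}\tilde{p}+\tilde{\rho}\widetilde{(\dx\phi)}=O(\mach^{2})\), Taylor-expand every interface quantity about the cell centers, and retain the leading terms in \(\Delta\tilde{t}\) and \(\Delta\tilde{x}\) while carefully tracking the powers of \(\mach\) that their coefficients carry. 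The relevant scalings are \(p_{0}/(\rho_{0}u_{0}^{2})=1/\mach^{2}\), \(\froude=\mach\) (so that \(\rho_{0}(\dx\phi)_{0}L=p_{0}\)), and, from~\eqref{def a}, \(a=\rho_{0}c_{0}\tilde{a}\) with \(\tilde{a}=O(1)\); in particular the factor \(1/a\) appearing in \(u^{*}_{j+1/2}\) carries one power of \(\mach\) relative to the velocity scale \(u_{0}\). Since the continuity line of~\eqref{pressure step} reads \(\rho^{P}_{j}=\rho^{n}_{j}\), it is exactly consistent with \(\partial_{\tilde{t}}\tilde{\rho}=0\), so only the momentum and energy updates need attention.

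The first step is to expand the interface quantities. For \(u^{*}_{j+1/2}\) the centered average gives \(\tilde{u}_{j+1/2}+O(\Delta\tilde{x}^{2})\); the correction \(-\tfrac{1}{2a}\bigl(p_{j+1}-p_{j}+\tfrac{\rho_{j+1}+\rho_{j}}{2}(\phi_{j+1}-\phi_{j})\bigr)\) is where the well-balanced form of \(\{\rho\dx\phi\}\) matters, since it makes the bracket equal \(\Dx\,(\dx p+\rho\dx\phi)_{j+1/2}+O(\Dx^{3})\), that is, in rescaled form, \(p_{0}\Delta\tilde{x}\,(\partial_{\tilde{x}}\tilde{p}+\tilde{\rho}\widetilde{(\dx\phi)})_{j+1/2}+O(\Delta\tilde{x}^{3})\), which by hypothesis is \(O(p_{0}\mach^{2}\Delta\tilde{x})\); dividing by \(2a\) and rescaling by \(u_{0}\) turns the surplus \(\mach^{2}\) into \(\mach\), hence \(\tilde{u}^{*}_{j+1/2}=\tilde{u}_{j+1/2}+O(\Delta\tilde{x}^{2})+O(\mach\Delta\tilde{x})\). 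Likewise, the non-centered part of \(\Pi^{*,\theta}_{j+1/2}\), namely \(-\theta_{j+1/2}\tfrac{a}{2}(u_{j+1}-u_{j})\), rescales to \(-\tfrac{\mach}{2}\theta_{j+1/2}\tilde{a}(\tilde{u}_{j+1}-\tilde{u}_{j})=O(\mach\Delta\tilde{x})\) (using \(\theta_{j+1/2}\le 1\) and that \(\theta\), \(\tilde{a}\) are smooth functions of the smooth flow fields), so that \(\tilde{\Pi}^{*,\theta}_{j+1/2}=\tilde{p}_{j+1/2}+O(\Delta\tilde{x}^{2})+O(\mach\Delta\tilde{x})\); and the discrete source terms \(\{\rho\dx\phi\}_{j}\) and \(\{\rho u\dx\phi\}_{j}\) are second-order consistent with \(\rho\dx\phi\) and \(\rho u\dx\phi\).

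The second step is to assemble the modified equations. For the momentum update, the centered difference of \(\Pi^{*,\theta}\) over \(\Delta\tilde{x}\) produces \(\partial_{\tilde{x}}\tilde{p}_{j}\) plus an \(O(\Delta\tilde{x}^{2})\) dispersive remainder, the non-centered part contributes a numerical-viscosity term \(-\tfrac{\mach}{2}\theta_{j+1/2}\tilde{a}\,\Delta\tilde{x}\,\partial_{\tilde{x}}^{2}\tilde{u}_{j}+O(\mach\Delta\tilde{x}^{2})=O(\mach\Delta\tilde{x})\), the discrete source gives \(\tilde{\rho}\widetilde{(\dx\phi)}_{j}+O(\Delta\tilde{x}^{2})\), and the time increment is \(\partial_{\tilde{t}}(\tilde{\rho}\tilde{u})_{j}+O(\Delta\tilde{t})\); multiplying the spatial part by the \(1/\mach^{2}\) prefactor that the non-dimensionalization places in front of the pressure and gravity terms and keeping the leading terms in \(\Delta\tilde{t}\) and \(\Delta\tilde{x}\) yields \(\partial_{\tilde{t}}(\tilde{\rho}\tilde{u})+\tfrac{1}{\mach^{2}}(\partial_{\tilde{x}}\tilde{p}+\tilde{\rho}\widetilde{(\dx\phi)})=O(\Delta\tilde{t})+O(\Delta\tilde{x}/\mach)\), the \(O(\Delta\tilde{x}^{2}/\mach^{2})\) dispersion being subdominant in \(\Delta\tilde{x}\) (keeping \(\theta\) explicit sharpens the bound to \(O(\theta\Delta\tilde{x}/\mach)\), which is what motivates the choice \(\theta=O(\mach)\) in Section~\ref{section: low mach}). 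For the energy update, substituting the two expansions into \(\Pi^{*,\theta}_{j+1/2}u^{*}_{j+1/2}\) gives \(\tilde{p}_{j+1/2}\tilde{u}_{j+1/2}+O(\Delta\tilde{x}^{2})+O(\mach\Delta\tilde{x})\), so its centered difference equals \(\partial_{\tilde{x}}(\tilde{p}\tilde{u})_{j}+O(\mach\Delta\tilde{x})\) --- now \emph{without} the \(1/\mach^{2}\) amplification --- and adding the (exactly treated) energy source and the \(O(\Delta\tilde{t})\) time error gives \(\partial_{\tilde{t}}(\tilde{\rho}\tilde{E})+\partial_{\tilde{x}}(\tilde{p}\tilde{u})=O(\Delta\tilde{t})+O(\mach\Delta\tilde{x})\), as claimed.

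The calculation is conceptually straightforward; the part I expect to require the most care is the bookkeeping of the non-dimensionalization --- keeping track of exactly which quantities are rescaled by \(u_{0}\), \(p_{0}\) and \(\rho_{0}u_{0}^{2}\), and pinning down where the amplifying \(1/\mach^{2}\) of the momentum equation meets the compensating \(\mach\) carried by \(1/a\) on the one hand, and the \(O(\mach^{2})\) smallness of \(\partial_{\tilde{x}}\tilde{p}+\tilde{\rho}\widetilde{(\dx\phi)}\) made available by the well-balanced discretization of the gravitational source on the other, so that the net effect is precisely the stated \(O(\Delta\tilde{x}/\mach)\) in the momentum equation and \(O(\mach\Delta\tilde{x})\) in the energy equation. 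Apart from this, the argument departs from \cite{chalons_all-regime_2016} only in the absence of the \(\rho u^{2}\) advection flux in the pressure subsystem and in the presence of the gravitational source, both of which are handled exactly as above.
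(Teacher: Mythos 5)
Your proposal is correct and follows essentially the same route as the paper, which itself only sketches this result by pointing to the truncation-error analysis of \cite{chalons_all-regime_2016}: rescale the finite-volume update built from the pressure-subsystem fluxes, insert a smooth low-Mach solution with \(\partial_{\tilde{x}}\tilde{p}+\tilde{\rho}\widetilde{(\dx\phi)}=O(\mach^{2})\), Taylor-expand, and track the powers of \(\mach\), with the decisive scalings (\(a\sim\rho_{0}c_{0}\), \(p_{0}/(\rho_{0}u_{0}^{2})=1/\mach^{2}\), \(\froude=\mach\), the well-balanced bracket in \(u^{*}\) being \(O(\mach^{2}\Delta\tilde{x})\), the \(u\)-jump in \(\Pi^{*}\) giving the \(O(\Delta\tilde{x}/\mach)\) momentum term and \(O(\mach\Delta\tilde{x})\) energy term) identified exactly as the paper intends. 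The only cosmetic points are the phrase ``exactly treated'' for the energy source (it is merely consistent to within \(O(\mach\Delta\tilde{x})+O(\Delta\tilde{x}^{2})\), as you yourself state earlier) and the discarding of the \(O(\Delta\tilde{x}^{2}/\mach^{2})\) centered-dispersion terms as formally higher order in \(\Delta\tilde{x}\), a convention the paper and \cite{chalons_all-regime_2016} adopt implicitly as well.
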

If one performs a similar evaluation for the full FSLP scheme, one can see that the truncation error term $O\left(\frac{\Delta \tilde{x}}{\mach}\right)$ that appears in the momentum equation of
\eqref{pressure subsystem low mach equivalent eq} will be the only error term whose magnitude is not uniform with respect to $\mach$. Similarly as in \cite{Dellacherie2010, chalons_all-regime_2016, padioleau2019high, Dellacherie2016}, this
truncation error term can be traced back to the non-centered part of $\Pi_{j+1/2}^*$. To tackle this issue, we adopt the modification used in \cite{chalons_all-regime_2016,padioleau2019high} by replacing
$\Pi_{j+1/2}^*$ with
\begin{equation}
	\Pi_{j+1 / 2}^{*, \theta}=\frac{1}{2}\left(\Pi_{j}^{n}+\Pi_{j+1}^{n}\right)-\theta_{j+1 / 2} \frac{a_{j+1 / 2}}{2}\left(u_{j+1}^{n}-u_{j}^{n}\right),
	\label{p^* corrigé}
\end{equation}
where $\theta_{j+1/2}\in[0,1]$. This results in the update relation~\eqref{acoustic step} that is a finite approximation of \eqref{pressure system compact} with the flux definition~\eqref{u*p*}. We will see in
 \ref{section: appendix modified approx riemann solver acoustic} how this resulting modified flux can still be associated with an Approximate Riemann solver.



\section{All-regime approximate Riemann solver for the pressure subsystem}
\label{section: appendix modified approx riemann solver acoustic}
Following similar lines as in \cite{chalons_all-regime_2016}: although the modified pressure scheme~\eqref{u*p*} is defined as a flux scheme, it is possible  to find
an approximate Riemann solver \((\appRP^\theta,\PiRP^\theta,\TauRP^\theta)\)
that enables to retrieve the numerical flux
\({\acousticop}_{j+1/2}^\theta = (0,\Pi^{*,\theta}_{j+1/2},\Pi^{*,\theta}_{j+1/2} u^*_{j+1/2})\). We suppose that  \((\appRP^\theta,\PiRP^\theta,\TauRP^\theta)\)
has the same structure as  \((\appRP,\PiRP,\TauRP)\), we consider
\begin{equation}
	(\appRP^\theta,\PiRP^\theta,\TauRP^\theta,\phiRP)
	\qty(\frac{x-\bar{x}}{t} ; \bU_{L},\Pi_L,\Tau_L,\phi_L, \bU_{R},\Pi_R,\Tau_R,\phi_R)
	=
	\begin{cases}
		(\bU_{L},\Pi_L,\Tau_L,\Phi_L),
		& \text { if  \(\frac{x-\bar{x}}{t} \leq-\frac{a}{\rho_L}\),}
		\\
		(\bU_{L}^{*,\theta},\Pi_L^{*,\theta},\Tau_L^{*,\theta},\Phi_L), & \text { if \(-\frac{a}{\rho_L} <\frac{x-\bar{x}}{t} \leq 0\)},
		\\
		(\bU_{R}^{*,\theta},\Pi^{*,\theta}_R,\Tau_R^{*,\theta},\Phi_R), & \text { if  \(0<\frac{x-\bar{x}}{t} \leq \frac{a}{\rho_R}\)},
		\\
		(\bU_{R},\Pi_R,\Tau_R,\Phi_R), & \text { if  \(\frac{a}{\rho_R} <\frac{x-\bar{x}}{t}\)},
	\end{cases}
\label{eq: modified approx riemann solver def}
\end{equation}
where $\Pi_k$, $\Tau_k$ and $\Phi_k$
verify \eqref{eq: equilibrium relations initial conditions RP}, $k=L,R$.
The states $(\bU_{k}^{*,\theta}, \Pi^{*,\theta}_k, \Tau_k^{*,\theta})$, $k=L,R$ are yet to be defined.
First, we impose that \((\appRP,\PiRP,\TauRP)\) verifies the consistency in the integral sense
\begin{equation}
	\begin{bmatrix}
	\frac{1}{\alpha}\acousticop\left(\bU_{R}\right)
	-
	\frac{1}{\alpha}\acousticop\left(\bU_{L}\right)
	\\
	\frac{1}{\alpha} a^2(u_R - u_L)
	\\
	-(\frac{1}{\alpha} u_R - \frac{1}{\alpha} u_L)
	\end{bmatrix}
	=
	-\frac{a}{\alpha\rho_L}
	\begin{bmatrix}
	\bU_{L}^{*,\theta}-\bU_{L}
	\\
	(\rho\Pi)_{L}^{*,\theta}-(\rho\Pi)_{L}
	\\
	(\rho\Tau)_{L}^{*,\theta}-(\rho\Tau)_{L}
	\end{bmatrix}
	+
	\frac{a}{\alpha \rho_R}
	\begin{bmatrix}
	\bU_{R}-\bU_{R}^{*,\theta}
	\\
	(\rho\Pi)_{R}-(\rho\Pi)_{R}^{*,\theta}
	\\
	(\rho\Tau)_{R}-(\rho\Tau)_{R}^{*,\theta}
	\end{bmatrix}
	+
	\frac{\Dx_{L}+\Dx_{R}}{2}
	\begin{bmatrix}
		\frac{1}{\alpha}\left\{\sourceterm\right\}
		\\
		0
		\\
		0
	\end{bmatrix}.
	\label{intg form theta}
\end{equation}
We then enforce that the numerical flux resulting from \eqref{intg form theta}
is $\acousticop_\Delta^{\theta}$, which boils down to require that
	\begin{equation}
	\begin{bmatrix}
			\frac{1}{\alpha}\acousticop_\Delta^\theta
				\\
			\frac{1}{\alpha}a^2 u_\Delta^\theta
			\\
			-\frac{1}{\alpha}u_\Delta^\theta
	\end{bmatrix}	\!\!
			(
				\bU_{L},\Pi_{L},\phi_{L}
				,\bU_{R},\Pi_{R},\phi_{R}
			)
	\! = \!
	\begin{bmatrix}
	\acousticop(\bU_R,\Pi_R)\!+\!\acousticop(\bU_L,\Pi_L)
	\\
	a^2u_R + a^2 u_L
	\\
	-(u_R + u_L)
	\end{bmatrix}
	\!\!
 	-\frac{a}{\rho_L}\!\!
\begin{bmatrix}
		\bU_L^{,\theta} - \bU_L
		\\
		(\rho\Pi)_L^{,\theta} - (\rho\Pi)_L
		\\
		(\rho\Tau)_L^{,\theta} - (\rho\Tau)_L
\end{bmatrix}
	\!\!
	-\frac{a}{\rho_R}\!\!
\begin{bmatrix}
		\bU_R - \bU_R^{,\theta}
		\\
		(\rho\Pi)_R - (\rho\Pi)_R^{,\theta}
		\\
		(\rho\Tau)_R - (\rho\Tau)_R^{,\theta}
\end{bmatrix}
.
\label{eq: flux eq theta}
\end{equation}
Choosing $\rho_k^{*,\theta} = \rho_k$, $k=L,R$, relation~\eqref{intg form theta}
and \eqref{eq: flux eq theta} provide a linear system with respect to
$u_k^{*,\theta}$, $\Pi_k^{*,\theta}$, $\Tau_k^{*,\theta}$ and $E_k^{*,\theta}$, $k=1,2$
whose solution is
\begin{subequations}
\begin{align}
		\rho_L^*&=\rho_L
		,&
		\rho_R^*&=\rho_R
		,\\
		E^{*,\theta}_L & = E_L^* -(1-\theta)\frac{u_R-u_L}{2}u^*
		, &
		E^{*,\theta}_R & = E_R^* +(1-\theta)\frac{u_R-u_L}{2}u^*
		, \\
		u^{*,\theta}_L &= u^* - (1-\theta)\frac{u_R-u_L}{2}
		,&
		u^{*,\theta}_R &= u^* + (1-\theta)\frac{u_R-u_L}{2}
		,\\
		\Pi_{L}^{*,\theta} &=\Pi_{L}^{*}
		, &
		\Pi_{R}^{*,\theta} &=  \Pi_{R}^{*}
		, \\
		\Tau_{L}^{*,\theta} &=\Tau_L^*
		, &
		\Tau_{R}^{*,\theta} &=\Tau_R^*.
\end{align}
\label{Solution riemann gravité all-regime}
\end{subequations}
We now turn to positivity-preserving related properties. Let us note
$e_k^{*,\theta}= E_k^{*,\theta} - {u_k^{*,\theta}}^2/2$, we have the following result.
	\begin{proposition}
		Assuming again that $a$ is large enough, we have
		\begin{equation}
			e_{k}^{*,\theta}-e^{\EOS}\left(\Tau_{k}^{*,\theta}, s_{k}\right)
			-\frac{\left(p^{\EOS}\left(\Tau_{k}^{*,\theta}, s_{k}\right)-\Pi^{*,\theta}_k\right)^{2}}{2 a^{2}}
			 +
			 \frac{(1-\theta)^2(u_R-u_L)^2}{8}\geq 0.
			\label{lemme 2}
		\end{equation}
	\end{proposition}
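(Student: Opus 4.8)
The plan is to reduce \eqref{lemme 2} to the inequality \eqref{lemme 1} already established in the preceding proposition, via a short algebraic manipulation based on the explicit formulas \eqref{Solution riemann gravité all-regime}. The key observation is that the all-regime correction only modifies the velocity (and, through it, the total energy) of the intermediate states, while leaving the surrogate specific volume and the surrogate pressure untouched: $\Tau_k^{*,\theta}=\Tau_k^*$ and $\Pi_k^{*,\theta}=\Pi_k^*$ for $k=L,R$. Hence the only terms in \eqref{lemme 2} that differ from \eqref{lemme 1} are $e_k^{*,\theta}$ and the added term $\tfrac{(1-\theta)^2(u_R-u_L)^2}{8}$, and it suffices to show these two contributions recombine into $e_k^*$.

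First I would set $\delta=(1-\theta)\tfrac{u_R-u_L}{2}$ and read off from \eqref{Solution riemann gravité all-regime} that $u_R^{*,\theta}=u^*+\delta$, $E_R^{*,\theta}=E_R^*+\delta u^*$, with the analogous relations carrying $-\delta$ in the case $k=L$. Since the approximate Riemann solver imposes the continuity $u_L^*=u_R^*=u^*$ across the stationary wave (relation \eqref{eq: u^* continuity}), we have $e_k^*=E_k^*-(u^*)^2/2$. Expanding the square then gives, for both $k=L$ and $k=R$,
\begin{equation}
e_k^{*,\theta}=E_k^{*,\theta}-\frac{(u_k^{*,\theta})^2}{2}=E_k^*\pm\delta u^*-\frac{(u^*)^2\pm 2\delta u^*+\delta^2}{2}=e_k^*-\frac{\delta^2}{2},
\end{equation}
the mixed terms $\pm\delta u^*$ cancelling exactly. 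Since $\delta^2/2=(1-\theta)^2(u_R-u_L)^2/8$, substituting this into the left-hand side of \eqref{lemme 2} makes the $-\delta^2/2$ contribution cancel with the added term $+(1-\theta)^2(u_R-u_L)^2/8$, so the left-hand side of \eqref{lemme 2} equals
\begin{equation}
e_k^*-e^{\EOS}\!\left(\Tau_k^*,s_k\right)-\frac{\left(p^{\EOS}\!\left(\Tau_k^*,s_k\right)-\Pi_k^*\right)^2}{2a^2},
\end{equation}
which is nonnegative by \eqref{lemme 1}, valid because $a$ is assumed large enough.

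I do not expect any real obstacle here: the argument is entirely bookkeeping. The only point requiring care is to track the quadratic velocity terms correctly and to invoke $u_L^*=u_R^*=u^*$ so that the cross terms cancel and the squared velocity correction $\delta^2/2$ is precisely what the added term in \eqref{lemme 2} is designed to absorb; handling $k=L$ and $k=R$ simultaneously through the sign $\pm\delta$ keeps the computation compact.
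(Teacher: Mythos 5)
Your proof is correct and follows essentially the same route as the paper: both compute from \eqref{Solution riemann gravité all-regime} that $e_k^{*,\theta}=e_k^*-\tfrac{(1-\theta)^2(u_R-u_L)^2}{8}$ (using $\Tau_k^{*,\theta}=\Tau_k^*$, $\Pi_k^{*,\theta}=\Pi_k^*$) and then conclude by invoking \eqref{lemme 1}. Your unified $\pm\delta$ bookkeeping is just a more compact presentation of the paper's case-by-case calculation.
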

	\begin{proof}
	Let us consider the case $k=R$, by \eqref{Solution riemann gravité all-regime}
	we get
\begin{align}
		e_R^{*,\theta}- e_R^{*} &=  E_R^{*,\theta}-E_R^{*} -\frac{1}{2}( {u_R^{*,\theta}}^2 - {u_R^{*}}^2 )
		\nonumber
		\\
		&=(1-\theta)\frac{u_R-u_L}{2} u^*
		-
		\frac{1}{2}
		\qty(
			(u^{*})^2 +u^{*}(1-\theta)(u_R-u_L)
			+(1-\theta)^2 \frac{(u_R-u_L)^2}{4}
			 - (u^{*})^2
		)
		\nonumber
		\\
		&=-\frac{1}{8}(1-\theta)^2 (u_R-u_L)^2.
\end{align}
Using \eqref{lemme 1}, we obtain
\begin{multline}
	e_{R}^{*,\theta}-e^{\EOS}\left(\Tau_{R}^{*,\theta}, s_{R}\right)
	=
	e_{R}^{*,\theta}
	-
	e_{R}^{*}
	+
	e_{R}^{*}
	-
	e^{\EOS}\left(\Tau_{R}^{*,\theta}, s_{R}\right)
	=
	-\frac{1}{8}(1-\theta)^2 (u_R-u_L)^2
	+
	e_{R}^{*}
	-
	e^{\EOS}\left(\Tau_{R}^{*,\theta}, s_{R}\right)
	\\
	\geq		\nonumber
	-\frac{1}{8}(1-\theta)^2 (u_R-u_L)^2
	+
	\frac{
		\qty(
			p^{\EOS}\left(\Tau_{k}^{*,\theta}, s_{k}\right)-\Pi^{*,\theta}_k
		)^{2}}{2 a^{2}}.
\end{multline}
Similar lines can be used for the case $k=L$.
\end{proof}
The relation~\eqref{lemme 2} highlights the role of the inequality
\begin{equation}
	\frac{1}{2 a^{2}}\left(p^{\EOS}(\Tau_{k}^{*, \theta}, s_{k})-\Pi^{*}_k\right)^{2}
	-
	\frac{(1-\theta)^{2}\left(u_{R}-u_{L}\right)^{2}}{8} \geq 0, \quad k=L, R
	\label{inegalité sur Co}
\end{equation}
in obtaining stability properties for the modified scheme. We have the following proposition.
\begin{proposition}\label{prop: e*>0 and s*>s modified scheme}
Let us note: $s_k^{*,\theta} = s^\EOS(\Tau_k^{*,\theta},e_k^{*,\theta})$, if \eqref{inegalité sur Co} is satisfied, then
\begin{itemize}
\item the modified approximate Riemann solver~\eqref{eq: modified approx riemann solver def} preserves the positivity of the internal energy, that is to say: $e_k^{*,\theta}>0$, $k=R,L$,
\item the modified approximate Riemann solver~\eqref{eq: modified approx riemann solver def} verifies
$s_k^{*,\theta} \geq s_k $, $k=R,L$,
\item the modified approximate Riemann solver~\eqref{eq: modified approx riemann solver def} is entropy satisfying in the sense that
\begin{equation}
-a(s^{*,\theta}_L -s_L)
+a(s_R - s^{*,\theta}_R )
\geq 0
.
\label{eq: approx riemann solver entropy inequality}
\end{equation}
\end{itemize}
\end{proposition}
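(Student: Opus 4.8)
The plan is to extract all three items of the proposition from the single estimate~\eqref{lemme 2} combined with the standing hypothesis~\eqref{inegalité sur Co}, so that essentially no new computation is required. First I would fix $k\in\{L,R\}$ and simply add \eqref{lemme 2} to \eqref{inegalité sur Co}. Since $\Pi_k^{*,\theta}=\Pi_k^*$ by~\eqref{Solution riemann gravité all-regime}, the two quadratic contributions $\tfrac{1}{2a^2}\bigl(p^\EOS(\Tau_k^{*,\theta},s_k)-\Pi_k^{*,\theta}\bigr)^2$ and the two occurrences of $\tfrac{(1-\theta)^2(u_R-u_L)^2}{8}$ cancel exactly, and what remains is
\[
	e_k^{*,\theta}\ \geq\ e^\EOS\!\left(\Tau_k^{*,\theta},s_k\right),\qquad k=L,R.
\]
This inequality is the engine of the whole proof.

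Next I would read off the first two items directly. For the positivity of the internal energy, recall that $\Tau_k^{*,\theta}=\Tau_k^*>0$ whenever $a$ is chosen large enough (the positivity statement attached to~\eqref{eq: Tau_k^* def}), and that $e^\EOS$ is positive, so that $e_k^{*,\theta}\geq e^\EOS(\Tau_k^{*,\theta},s_k)>0$. For the entropy bound, I would invoke \eqref{weyl}: since $\partial_s e^\EOS>0$, the map $s\mapsto e^\EOS(\Tau,s)$ is strictly increasing at fixed $\Tau$, hence its inverse $e\mapsto s^\EOS(\Tau,e)$ is strictly increasing too. Applying this monotone map, at $\Tau=\Tau_k^{*,\theta}$, to the inequality above gives
\[
	s_k^{*,\theta}=s^\EOS\!\left(\Tau_k^{*,\theta},e_k^{*,\theta}\right)\ \geq\ s^\EOS\!\left(\Tau_k^{*,\theta},e^\EOS(\Tau_k^{*,\theta},s_k)\right)=s_k,\qquad k=L,R.
\]

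Finally, the entropy inequality~\eqref{eq: approx riemann solver entropy inequality} follows at once from $s_L^{*,\theta}\geq s_L$, $s_R^{*,\theta}\geq s_R$ and $a>0$: up to the positive weights $a$, its left-hand side is the sum over the two acoustic waves of (wave speed)$\times$(jump of $\rho s$ across that wave), the intermediate densities being frozen ($\rho_k^{*,\theta}=\rho_k$), which is precisely the quantity that must carry the right sign for the approximate Riemann solver~\eqref{eq: modified approx riemann solver def} to be consistent, in the integral sense, with the entropy inequality $\dt(\rho s)\geq0$ of the pressure subsystem. I do not expect an analytic obstacle here, since the only substantial estimate, namely~\eqref{lemme 2}, is established separately; the real care is bookkeeping — one must make sure that $\Pi_k^{*,\theta}=\Pi_k^*$ so that the quadratic terms genuinely cancel when~\eqref{lemme 2} and~\eqref{inegalité sur Co} are summed, and that the weighting and signs written in~\eqref{eq: approx riemann solver entropy inequality} are exactly those produced by averaging $\rho s$ over the Riemann fan across one time step.
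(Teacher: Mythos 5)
Your proposal is correct and follows essentially the same route as the paper's own proof: summing \eqref{lemme 2} with \eqref{inegalité sur Co} (the quadratic terms cancelling since $\Pi_k^{*,\theta}=\Pi_k^*$) to get $e_k^{*,\theta}\geq e^{\EOS}(\Tau_k^{*,\theta},s_k)$, then invoking positivity of $e^{\EOS}$ and the monotonicity of $e\mapsto s^{\EOS}(\Tau,e)$ from \eqref{weyl} to obtain $e_k^{*,\theta}>0$ and $s_k^{*,\theta}\geq s_k$, and finally reading \eqref{eq: approx riemann solver entropy inequality} off these inequalities exactly as the paper does (your wave-by-wave bookkeeping remark is the same justification the paper leaves implicit).
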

\begin{proof}
If \eqref{prop: e*>0 and s*>s modified scheme} is satisfied, then
$e_{k}^{*,\theta} \geq e^{\EOS}\left(\Tau_{k}^{*,\theta}, s_{k}\right)$. By the assumption on the EOS, we have that $e_{k}^{*,\theta}>0$. Now, considering a fixed $\overline{\Tau}>0$, by \eqref{weyl} we know that $e'\mapsto s^\EOS(\overline{\Tau},e')$ is increasing, thus we deduce that
$ s^\EOS(\Tau_k^{*,\theta},e^\EOS(\Tau_k^{*,\theta},s_k^{*,\theta})) = s_k^{*,\theta}
\geq
s^\EOS(\Tau_k^{*,\theta},e^\EOS(\Tau_k^{*,\theta},s_k)) = s_k
$, $k=L,R$. This implies \eqref{eq: approx riemann solver entropy inequality}.
\end{proof}



\section{Eigenstructure of the off-equilibrium~(\ref{eq: relaxed system param}\textsubscript{\(\relaxparam\!=\!0\)})}\label{section: eigenstructure analysis}
We propose in this section to study the eigenstructure of the
relaxation system~(\ref{eq: relaxed system param}\textsubscript{\(\relaxparam\!=\!0\)}).
Let us first express the acoustic part of (\ref{eq: relaxed system param}\textsubscript{\(\relaxparam\!=\!0\)}) using
a change of variables: accounting for
$e^P = E^P - (u^P)^2/2$, the evolution equations for $E^P$, for $\Pi^P$ and $\Tau^P$
in {\renewcommand{\subscriptrelax}{\relaxparam=0}\eqref{eq: relaxed system param pressure}} yield
\begin{align}
	\dt(\rho^P e^P) + 2\Pi^P \dx u^P
	&=0
	,&
	2\dx u^P
	&=
	\dt(\rho^P \Pi^P / a^2)
	\label{eq: acoustic e eq}
	.
\end{align}
We thus obtain the stationary equations
\begin{align}
	\dt\qty
	[e^P - \frac{(\Pi^P)^2}{2a^2}]
	&=0
	,&
	\dt\qty[\Tau^P + \frac{\Pi^P}{a^2}] &= 0
	.
	\label{eq: acoustic stationnary eq}
\end{align}
So now the acoustic subsystem~{\renewcommand{\subscriptrelax}{\relaxparam=0}\eqref{eq: relaxed system param pressure}} takes the simple form
%
%
\begin{subequations}
	\begin{align}
		\dt \phi & \!=\!0
		,&
		\dt \rho^P &\!\!=\!0
		,&
		\dt\qty
		[e^P \!-\! \frac{(\Pi^P)^2}{2a^2}]
		&=0
		,\\
		\dt (\rho^P u^P) + 2\dx\Pi^P +2\rho^P\dx \phi^P&\!\!=\!0
		,&
		\dt (\rho^P\Pi^P) + 2 a^2 \dx u^P &\!\!=\!0
		,&
		\dt\qty[\Tau^P + \frac{\Pi^P}{a^2}] &= 0
		.
		\label{eq: acoustic part v1}
	\end{align}
\end{subequations}
We now turn to the advection part of (\ref{eq: relaxed system param}\textsubscript{\(\relaxparam\!=\!0\)}): the subsystem~{\renewcommand{\subscriptrelax}{\relaxparam=0}\eqref{eq: relaxed system param advection}} takes the simple form
%
\begin{align} 
	\dt \rho^A + \dx (2\rho^A u^P)&=0,
	&
	\dt\qty[
	\rho^A\Tau^A-\frac{\rho^P\Pi^P}{a^2}
	]&=0
	,&
	\dt b^A + 2 u^P\dx b^A&=0
	,\quad
	b^A\in\{u^A, E^A, \Pi^A \}
	.
	\label{eq: advection part v1}
\end{align}
Therefore if we set
	\begin{equation}
		\bW^T = \qty [u^P , \Pi^P , \rho^P, \phi ,
		e^P-\frac{(\Pi^P)^2}{2 a^2},
		\Tau^P+\frac{\Pi^P}{a^2},
		\rho^A\Tau^A-\frac{\rho^P\Pi^P}{a^2},
		u^A , \Pi^A , E^A, \rho^A],
	\end{equation}
we can see that (\ref{eq: relaxed system param}\textsubscript{\(\relaxparam\!=\!0\)}) can be recast
into the following quasilinear system
	\begin{align}
		\dt\bW + \diagomatrix(\bW)\dx\bW
		&=0,&
		\diagomatrix(\bW)
		&=
		\qty[
		\begin{array}{ccccccccccc}
			0                     & \dfrac{2}{\rho^P} & 0 & 2 & 0 & 0 & 0 & 0 & 0 & 0 & 0
			\\
			\dfrac{2 a^2}{\rho^P} &      0            & 0 & 0 & 0 & 0 & 0 & 0 & 0 & 0 & 0
			\\
			0                     &      0            & 0 & 0 & 0 & 0 & 0 & 0 & 0 & 0 & 0
			\\
			0                     &      0            & 0 & 0 & 0 & 0 & 0 & 0 & 0 & 0 & 0
			\\
			0                     &      0            & 0 & 0 & 0 & 0 & 0 & 0 & 0 & 0 & 0
			\\
			0                     &      0            & 0 & 0 & 0 & 0 & 0 & 0 & 0 & 0 & 0
			\\
			0                     &      0            & 0 & 0 & 0 & 0 & 0 & 0 & 0 & 0 & 0
			\\
			0                     &      0            & 0 & 0 & 0 & 0 & 0 & 2 u^P & 0 & 0 & 0
			\\
			0                     &      0            & 0 & 0 & 0 & 0 & 0 & 0 & 2 u^P & 0 & 0
			\\
			0 & 0 & 0 & 0 & 0 & 0 & 0 &  0 & 0 & 2 u^P & 0
			\\
			2\rho^A & 0 & 0 & 0 & 0 &  0 & 0 & 0 & 0 & 0 & 2 u^P
		\end{array}
		]
		.
		\label{eq: overall system non-conservative}
	\end{align}
It is then straightforward to see that the eigenvalues of $\diagomatrix(\bW)$ are
$2 u^P$ (with an algebraic multiplicity 4), $0$ (with an algebraic multiplicity 5) and
$\pm 2 a/\rho^P$.

The eigenvectors
$\qty(\br_0^{(k)})_{k=1,\ldots,3}$,
$\qty(\br_{u^P}^{(k)})_{k=1,\ldots,4}$
and $\br_{\pm}$ that are respectively associated with $0$, $2 u^P$ and $\pm 2 a/\rho^P$ are
\begin{subequations}
	\begin{align}
		\br_0^{(1)}
		&=
		[0,0,1,0,0,0,0,0,0,0,0]^T
		,&
		\br_0^{(2)}
		&=
		\qty[0,-\rho^P,0,1,0,0,0,0,0,0,0]^T
		,\\
		\br_0^{(3)}
		&=
		[0,0,0,0,1,0,0,0,0,0,0]^T
		,&
		\br_0^{(4)}
		&=
		[0,0,0,0,0,1,0,0,0,0,0]^T
		\\
		\br_0^{(5)}
		&=
		[0,0,0,0,0,0,1,0,0,0,0]^T
		,&
		\\
		\br_{u^P}^{(1)}
		&=
		[0,0,0,0,0,0,0,1,0,0,0]^T
		,&
		\br_{u^P}^{(2)}
		&=
		[0,0,0,0,0,0,0,0,1,0,0]^T
		,\\
		\br_{u^P}^{(3)}
		&=
		[0,0,0,0,0,0,0,0,0,1,0]^T
		,&
		\br_{u^P}^{(4)}
		&=
		[0,0,0,0,0,0,0,0,0,0,1]^T
		,\\
		\br_{+}
		&=
		\qty
		[1,a,0,0,0,0,0,0,0,0,-\frac{\rho^A\rho^P}{\rho^P u^P-a}]^T
		,&
		\br_{-}
		&=
		\qty
		[1,-a,0,0,0,0,0,0,0,0,-\frac{\rho^A\rho^P}{\rho^P u^P+a}]^T
		,
	\end{align}
	\label{eq: eigenvectors}
\end{subequations}
so that \eqref{eq: overall system non-conservative} is hyperbolic and only involves linearly degenerate fields.

\end{document}